\theoremstyle{plain}
\newtheorem{thm}{\protect\theoremname}[section]
  \theoremstyle{plain}
  \newtheorem{prop}[thm]{\protect\propositionname}
  \theoremstyle{plain}
  \newtheorem{cor}[thm]{\protect\corollaryname}
  \theoremstyle{plain}
  \newtheorem{lem}[thm]{\protect\lemmaname}
  \theoremstyle{definition}
  \newtheorem{defn}[thm]{\protect\definitionname}
 \theoremstyle{plain}
  \newtheorem{conj}[thm]{\protect\conjecturename}
  \providecommand{\corollaryname}{Corollary}
  \providecommand{\definitionname}{Definition}
  \providecommand{\lemmaname}{Lemma}
  \providecommand{\propositionname}{Proposition}
\providecommand{\theoremname}{Theorem}
\providecommand{\conjecturename}{Conjecture}
\tikzset{->-/.style={decoration={
  markings,
  mark=at position .5 with {\arrow{>}}},postaction={decorate}}}
\tikzset{->>-/.style={decoration={
  markings,
  mark=at position .5 with {\arrow{>>}}},postaction={decorate}}}
\tikzset{-<-/.style={decoration={
  markings,
  mark=at position .5 with {\arrow{<}}},postaction={decorate}}}
\tikzset{-<<-/.style={decoration={
  markings,
  mark=at position .5 with {\arrow{<<}}},postaction={decorate}}}
\begin{document}
\def\COMMENT#1{}

\global\long\def\labelenumi{(\roman{enumi})}

\def\noproof{{\unskip\nobreak\hfill\penalty50\hskip2em\hbox{}\nobreak\hfill%
        $\square$\parfillskip=0pt\finalhyphendemerits=0\par}\goodbreak}
\def\endproof{\noproof\bigskip}
\newdimen\margin   % needed for macros \textdisplay & \ltextdisplay
\def\textno#1&#2\par{%
    \margin=\hsize
    \advance\margin by -4\parindent
           \setbox1=\hbox{\sl#1}%
    \ifdim\wd1 < \margin
       $$\box1\eqno#2$$%
    \else
       \bigbreak
       \hbox to \hsize{\indent$\vcenter{\advance\hsize by -3\parindent
       \sl\noindent#1}\hfil#2$}%
       \bigbreak
    \fi}
\def\proof{\removelastskip\penalty55\medskip\noindent{\bf Proof. }}

\newcommand{\Int}{{\rm Int}}

\title[Proof of a conjecture of Thomassen]{Proof of a conjecture of Thomassen on Hamilton cycles in highly connected tournaments}

\author{Daniela K\"uhn, John Lapinskas, Deryk Osthus and Viresh Patel}
\begin{abstract}
A conjecture of Thomassen from 1982 states that for every $k$ there is an $f(k)$ so that 
every strongly $f(k)$-connected tournament contains $k$ edge-disjoint Hamilton cycles.
A classical theorem of Camion, that every 
strongly connected tournament contains a Hamilton cycle, implies that $f(1)=1$.
So far, even the existence of $f(2)$ was open.
In this paper, we prove Thomassen's conjecture by showing that $f(k)=O( k^2 \log^2k)$.
This is best possible up to the logarithmic factor.
As a tool, we show that every strongly $10^4 k \log k$-connected tournament is $k$-linked
(which improves a previous exponential bound).
The proof of the latter is based on a fundamental result of Ajtai, Koml\'os and Szemer\'edi
on asymptotically optimal sorting networks.
\end{abstract}

\date{\today}

\thanks{D.~K\"uhn, D.~Osthus and V.~Patel were supported by the EPSRC, grant no.~EP/J008087/1. 
D.~K\"uhn was also supported by the ERC, grant no.~258345.
D.~Osthus was also supported by the ERC, grant no.~306349.} 

\maketitle

\section{Introduction\label{sec:intro}}

\subsection{Main result}
A \emph{tournament} is an orientation of a complete graph and a Hamilton cycle in a tournament is 
a (consistently oriented) cycle which contains all the vertices of the tournament.
Hamilton cycles in tournaments have a long and rich history.
%One of the oldest results about tournaments is Redei's theorem that
For instance, one of the most basic results about tournaments is Camion's theorem, which states that every 
strongly connected tournament has a Hamilton cycle~\cite{camion}.
This is strengthened by Moon's theorem~\cite{moon}, which implies that such a tournament is even pancyclic,
i.e.~contains cycles of all possible lengths.
Many related results have been proved; the monograph by Bang-Jensen and Gutin~\cite{digraphsbook} gives an overview which also 
includes many recent results.

In 1982, Thomassen~\cite{thomassenconj} made a very natural conjecture on how to guarantee not just one Hamilton cycle, 
but many edge-disjoint ones: he conjectured that for every $k$ there is an $f(k)$ so that 
every strongly $f(k)$-connected tournament contains $k$ edge-disjoint Hamilton cycles
(see also the recent surveys~\cite{JBconj,KOsurvey}).
This turned out to be surprisingly difficult: not even the existence of $f(2)$ was known so far.
Our main result shows that $f(k)=O(k^{2}\log^{2} k)$.
\begin{thm}
\label{thm:thomconj}
There exists $C>0$ such that for all $k\in\mathbb{N}$ with $k\ge 2$ every
strongly $Ck^{2}\log^{2} k$-connected tournament contains $k$ edge-disjoint
Hamilton cycles.%
   \COMMENT{Need that $k\ge 2$ since $\log 1=0$.}
\end{thm}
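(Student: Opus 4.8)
We use two ingredients: elementary degree bounds, and the $k$-linkedness result quoted above. First recall that a strongly $\ell$-connected tournament has minimum in-degree and minimum out-degree at least $\ell$ (deleting all out-neighbours of a vertex leaves it unable to reach anything). So if $T$ is strongly $Ck^2\log^2k$-connected then its minimum semi-degree $d$ (the minimum over all vertices of the in-degree and the out-degree) satisfies $d\ge Ck^2\log^2 k$, and moreover $T$ is $\Omega(k\log k)$-linked. We will want the latter in a form that survives deleting a few edges at each vertex.

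\emph{Step 1: few edge-disjoint spanning path systems.} I would first produce pairwise edge-disjoint spanning subdigraphs $H_1,\dots,H_k$ of $T$, each a vertex-disjoint union of fewer than $2k$ directed paths. Build them greedily. Having chosen $H_1,\dots,H_{i-1}$, the digraph $T_i:=T\setminus(H_1\cup\dots\cup H_{i-1})$ arises from the tournament $T$ by deleting at most two edges per vertex from each $H_j$, hence at most $2(i-1)<2k$ edges at every vertex; so the undirected graph of missing arcs of $T_i$ has maximum degree $<2k$, the largest set of vertices spanning no arc of $T_i$ has size $<2k$, and the Gallai--Milgram theorem furnishes a vertex path cover of $T_i$ by fewer than $2k$ directed paths, which we take to be $H_i$. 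Choosing $H_i$ to be a minimum such cover, it has no isolated vertices, and for any end $e$ of one path and start $s$ of a different path the arc $es$ is \emph{absent} from $T_i$ (else concatenating would reduce the number of paths); we record this. Since each $H_i$ uses at most two edges per vertex, $T$ minus all the $H_j$ still has minimum semi-degree $\ge d-2k$.

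\emph{Step 2: closing up.} Each $H_i$ already spans $V(T)$, so it remains to join its fewer than $2k$ paths into a single Hamilton cycle $C_i$ by inserting connecting paths, in such a way that all the connecting paths, over all $i$, are pairwise edge-disjoint and edge-disjoint from $H_1,\dots,H_k$; then $C_1,\dots,C_k$ are the desired edge-disjoint Hamilton cycles. By the property recorded in Step~1, a connecting path (which must avoid all the $H_j$) cannot be a single arc, so it has to use fresh vertices --- but all vertices are already used by $H_i$. I would therefore carry out Step~1 inside $T-W$ for a suitable set $W$, so that each $H_i$ spans $V(T)\setminus W$ by fewer than $2k$ paths, and then for each $i$ route the fewer than $2k$ internally disjoint connecting paths of $C_i$ through $W$, arranging them to exhaust $W$ and to be edge-disjoint from everything chosen so far. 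This is where $O(k)$-linkedness of $T[W]$ (together with the $O(k)$ relevant path-endpoints) enters, once per round; the bound $d=\Omega(k^2\log^2 k)$ provides, quantitatively, enough head-room to pay for all $k$ rounds.

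\emph{Main obstacle.} The heart of the matter is Step~2: one must choose $W$, and the connecting paths, so that $W$ stays a usable routing gadget for all $k$ rounds, i.e.\ retains enough local connectivity after the edges removed by earlier connecting paths --- and so that $W$ can always be made to lie on $C_i$. The difficulty is structural: a highly connected tournament need not contain any sparse (bounded-degree) spanning subdigraph that is still highly linked --- indeed one cannot even ``reserve'' a spanning routing subdigraph, since being strongly $\ell$-connected forces minimum degree $\ge\ell$ while being $\Omega(\ell)$-linked would force degree $\ge \Omega(\ell\log\ell)$ --- so $W$ has to be a (large, carefully chosen) vertex set, and the connecting paths must be spread so that no cut of $T[W]$ loses too many of its edges over the $k$ rounds. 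It is precisely here that the near-optimal linkedness bound $10^4k\log k$ --- hence the Ajtai--Koml\'os--Szemer\'edi sorting-network input --- is needed: with only a weaker (say exponential) bound, the $O(k^2\log^2 k)$ connectivity budget would be too small to keep $\Omega(k\log k)$-linkedness available through all $k$ reuses of $W$. Verifying the Gallai--Milgram/independence-number estimate and the ``starts beat ends'' structure of Step~1 is, by contrast, routine.
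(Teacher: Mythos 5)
The high-level reduction to linkedness is the same as the paper's (and the observation that strong $\ell$-connectivity gives $\delta^0 \ge \ell$ is correct), but Step~2 as written is not a proof; it stops exactly where the paper's real work begins, and the gadget you propose does not obviously exist.

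Concretely, the proposal asks for a fixed vertex set $W$ such that, in each of $k$ rounds, fewer than $2k$ vertex-disjoint connecting paths can be routed through $T[W]$ joining prescribed endpoint pairs, \emph{and} those paths together exhaust $W$, \emph{and} the paths chosen in round $i$ are edge-disjoint from those of rounds $1,\dots,i-1$. Linkedness of $T[W]$ would give the first requirement only; it gives you disjoint paths with prescribed ends, with no control over which vertices of $W$ they pass through, let alone a guarantee that together they cover $W$. ``Fewer than $2k$ paths exhausting $W$'' is a Hamilton-path-system condition with prescribed ends, which is essentially the same flavour of problem you started with, now iterated $k$ times under an edge-disjointness constraint. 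The proposal names this as the obstacle but gives no mechanism to overcome it, so it is a statement of difficulty rather than an argument. (The side remark that $\Omega(\ell)$-linkedness would force minimum degree $\Omega(\ell\log\ell)$ is also unsubstantiated --- linkedness does not impose a logarithmic degree requirement; the $\log k$ in Theorem~\ref{thm:linkedness} is an upper-bound artefact of the sorting-network construction, and Conjecture~\ref{conj:linkage} explicitly expects it to be removable.)

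The paper resolves precisely this covering-plus-reuse problem by a different device: instead of a single routing gadget $W$ that must be exhausted each round, it sets up, once at the outset, a family of disjoint transitive almost-dominating sets $A_i^\ell,B_i^\ell$ and a reservoir of pre-linked paths $P_i^\ell$ (Definition~\ref{def:good}), together with a matching $F_i$ of ``covering edges''. Domination lets every Gallai--Milgram path endpoint be extended by at most two steps into the appropriate transitive set; transitivity then funnels it to the head/tail, which is joined to the next piece by a pre-chosen $P_i^\ell$; and the covering edges absorb, after the fact, any vertices of the dominating sets that the resulting long cycle missed. Because round $i$ uses only the sets and paths with index $i$, edge-disjointness across rounds is automatic and there is no need to re-link through a reused gadget. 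This is the missing idea that your Step~2 would need, and it is genuinely different from a reusable routing set $W$.
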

In Proposition~\ref{thm:example}, we describe an example which shows that $f(k)\ge (k-1)^2/4$, 
i.e.~our bound on the connectivity is asymptotically close to best possible.
Thomassen~\cite{thomassenconj} observed that $f(2)>2$ and conjectured that $f(2)=3$.
He also observed that one cannot weaken the assumption in Theorem~\ref{thm:thomconj} by replacing 
strong connectivity with strong edge-connectivity.

To simplify the presentation, we have made no attempt to optimize the value of the constant~$C$.
Our exposition shows that one can take $C:=10^{12}$ for $k \ge 20$.\COMMENT{Lemmas~\ref{lem:good-embed} and \ref{lem:good}
tell us that any $10^7k\log k$-linked tournament has $k$ EDHCs for $k \ge 20$. Theorem \ref{thm:linkedness} tells us that any
$10^{12}k\log^2 k$-connected tournament is $10^7k\log k$-linked, since 
$10^4 \cdot (10^7 k \log k) \log (10^7 k \log k) \le 10^4 \cdot (10^7 k \log k) \log (k^9) \le 10^{12} k \log^2 k$
}
Rather than proving Theorem~\ref{thm:thomconj} directly, we deduce it as an immediate consequence of two
further results, which 
are both of independent interest: we show that every sufficiently highly connected tournament is highly linked
(see Theorem~\ref{thm:linkedness}) and show that every highly linked tournament contains many 
edge-disjoint Hamilton cycles (see Theorem~\ref{thm:mainresult}).

\subsection{Linkedness in tournaments}
Given sets $A$, $B$ of size $k$ in a strongly $k$-connected digraph~$D$, Menger's theorem implies that $D$
contains $k$ vertex-disjoint paths from $A$ to $B$. In a $k$-linked digraph, we can even specify 
the initial and final vertex of each such path (see Section~\ref{sec:notation} for the precise definition).
\begin{thm}
\label{thm:mainresult} There exists $C'>0$ such that for all $k\in\mathbb{N}$ with $k\ge 2$
every $C'k^{2}\log k$-linked tournament contains $k$ edge-disjoint
Hamilton cycles.%
\end{thm}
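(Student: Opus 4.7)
My plan is to first decompose a portion of $E(T)$ into $k$ edge-disjoint $1$-factors $F_1,\dots,F_k$ (that is, spanning unions of vertex-disjoint directed cycles), and then use the linkedness of $T$ to transform each $F_i$ into a Hamilton cycle $H_i$, while keeping the $H_i$ pairwise edge-disjoint.

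\medskip
\emph{Step 1 (edge-disjoint $1$-factors).} Since $T$ is $C'k^2\log k$-linked, Menger's theorem yields $\delta^0(T)\ge C'k^2\log k$. Encoding $T$ as the bipartite graph $B_T$ on two copies $V_{\mathrm{out}}\cup V_{\mathrm{in}}$ of $V(T)$ with $u_{\mathrm{out}}v_{\mathrm{in}}\in E(B_T)$ iff $uv\in E(T)$, the perfect matchings of $B_T$ are precisely the $1$-factors of $T$. The minimum degree of $B_T$ is at least $\delta^0(T)$, so K\"onig's edge-colouring theorem (or an iterated Hall argument) supplies many edge-disjoint perfect matchings, from which we retain $k$ to obtain $F_1,\dots,F_k$. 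A probabilistic alteration --- randomise the choice of matching, then patch overly long structures by local swaps --- additionally lets me arrange that each $F_i$ has only $m=O(\log k)$ components, at a cost of only $O(\log k)$ of the semidegree budget per vertex.

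\medskip
\emph{Step 2 (merging each $F_i$).} Write $F_i=C_1\cup\dots\cup C_m$. I perform $m-1$ merging operations. In each operation we pick two current cycles $C,C'$, consecutive pairs $uv\in E(C)$ and $u'v'\in E(C')$, and --- using the linkedness of $T$ restricted to the edges still unused by any $F_j$ and by any previous merge --- find two internally vertex-disjoint paths $P\colon u\rightsquigarrow v'$ and $P'\colon u'\rightsquigarrow v$. Because $F_i$ spans $V(T)$, every internal vertex $w$ of $P\cup P'$ lies on some cycle $C''$ currently in $F_i$; we absorb $w$ into the newly merged cycle by rotating $C''$ at $w$ (deleting the $F_i$-edge out of $w$ and reconnecting the remaining arc at the far end). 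After $m-1$ such operations the $1$-factor $F_i$ collapses into a single Hamilton cycle $H_i$. Across all $i\in[k]$ this generates at most $km=O(k\log k)$ linkage requests, each asking for $O(k)$ internally disjoint paths with prescribed endpoints, which fits comfortably inside the $C'k^2\log k$-linkedness hypothesis with slack left over for pairwise edge-disjointness.

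\medskip
\emph{Main obstacle.} The genuinely delicate point is the absorption step: rotating a cycle $C''$ at an internal vertex $w$ of a connector can split $C''$ into two shorter cycles, and in principle a cascade of such splits could drive the cycle count back up faster than our merging can push it down. Controlling this cascade --- either by a potential-function argument tracking cycles plus outstanding linkage requests, or by forcing each connector path to run inside an a priori reserved sparse subgraph whose intersection with every $F_j$ is short --- is the central technical challenge, and is precisely what the $\log k$ slack in the linkedness hypothesis is designed to accommodate.
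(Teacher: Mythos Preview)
Your proposal has two genuine gaps, both stemming from the same underlying issue: you are trying to bound in terms of $k$ quantities that inherently depend on $n=|T|$, and $n$ can be arbitrarily large compared to $k$.

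\medskip
\textbf{Step 1 is unjustified.} The claim that a probabilistic alteration yields $1$-factors with only $O(\log k)$ components is not an argument, and it is almost certainly false as stated. A $1$-factor of $T$ corresponds to a permutation of $[n]$; even under strong degree hypotheses there is no mechanism forcing the cycle count to be bounded in terms of $k$ rather than $n$. Patching by ``local swaps'' can reduce the count by one per swap, but you would need $\Theta(\log n)$ swaps, not $O(\log k)$, and you have not explained why the swap edges stay disjoint across the $k$ factors. This step would itself be a substantial theorem.

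\medskip
\textbf{Step 2 does not close.} You correctly identify the cascade problem, but do not resolve it. The connector paths $P,P'$ produced by linkedness have total length of order $n/s$ (this is the best one can extract from linkedness; see the short-linkage lemma), not length bounded by $k$. Every internal vertex of $P\cup P'$ forces you to cut the $F_i$-cycle through it, so a single merge can create order-$n$ new pieces. No potential function with budget $O(k^2\log k)$ can absorb this, and the ``reserved sparse subgraph'' idea is unspecified.

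\medskip
\textbf{What the paper does instead.} The paper never touches $1$-factors. It pre-builds, for each $i\in[k]$, a \emph{linked domination structure}: small transitive sets $A_i^\ell,B_i^\ell$ of size $O(\log k)$ that out-/in-dominate all but an exceptional set, joined by short pre-routed paths $P_i^\ell$ that carry a matching of \emph{covering edges} for every vertex in $\bigcup A_i^\ell\cup\bigcup B_i^\ell$. After removing $i-1$ Hamilton cycles, Gallai--Milgram gives a path cover of size $O(k)$; the dominating sets and pre-routed paths stitch these into one cycle, and the covering edges absorb the leftover vertices. The point is that the $n$-dependence is confined to the exceptional sets, whose vertices are guaranteed high degree and can therefore be pushed out by a controlled number of local path extensions --- this is exactly the mechanism your approach lacks.
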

The bound in
Theorem~\ref{thm:mainresult} is asymptotically close to best possible, as we shall discuss below.
We will show that $C':=10^{7}$ works for all $k\ge 20$.
(As mentioned earlier, we have made no attempt to optimise the value of this constant.)

It is not clear from the definition that every (very) highly connected tournament is also highly linked.
In fact, for general digraphs this is far from true:
Thomassen~\cite{Thomassen} showed that for all $k$ there are strongly $k$-connected digraphs which
are not even $2$-linked. On the other hand, he showed that there is an (exponential) function $g(k)$
so that every strongly $g(k)$-connected tournament is $k$-linked~\cite{thomassenlinked}.
The next result shows that we can take $g(k)$ to be almost linear in $k$.
Note that this result together with Proposition~\ref{thm:example} shows that Theorem~\ref{thm:mainresult}
is asymptotically best possible up to logarithmic terms.
\begin{thm} \label{thm:linkedness}
For all $k \in \mathbb{N}$ with $k\ge 2$ every strongly $10^4k \log k$-connected tournament is $k$-linked. 
\end{thm}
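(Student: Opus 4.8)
The plan is to reduce $k$-linkedness to a routing problem that can be solved by embedding a sorting network. Suppose we are given a strongly $10^4 k\log k$-connected tournament $T$ together with distinct vertices $x_1,\dots,x_k,y_1,\dots,y_k$, and we must find vertex-disjoint paths $P_i$ from $x_i$ to $y_i$. The first step is to use the high connectivity (via Menger's theorem, applied repeatedly to peel off linkages while keeping enough connectivity in the remainder) to build many vertex-disjoint \emph{long transitive-like subtournaments} or, more efficiently, to find a collection of vertex-disjoint paths whose endpoints can be prescribed but whose \emph{order} along some master structure we do not control. Concretely, I would first route from each $x_i$ into a common ``clearing house'' and from each $y_i$ out of it, so that the remaining task is: given the $k$ sources appearing in some permutation $\pi$ on a set of disjoint paths, permute them to match the targets. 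This is exactly what a sorting network does.

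The key ingredient, as the abstract advertises, is the Ajtai--Komlós--Szemerédi sorting network: there is a comparator network of depth $O(\log k)$ (hence $O(k\log k)$ comparators) that sorts $k$ inputs. I would simulate each comparator gate by a small ``switch gadget'' inside $T$: a gadget on a bounded number of internal vertices that takes in two vertex-disjoint incoming paths and outputs them on two vertex-disjoint outgoing paths, with the ability to either keep them parallel or cross them, depending on the comparison. Since a tournament on a few vertices always contains the short paths needed to realise both the ``identity'' and the ``swap'' wiring (this is where one uses that every tournament has a Hamilton path, and that a strongly connected tournament on $\ge 3$ vertices is pancyclic), each gadget can be built greedily provided the ambient connectivity is large enough to guarantee the gadget vertices and the connecting paths avoid everything used so far. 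Laying out the $O(\log k)$ layers, each with $\le k/2$ gadgets, and connecting consecutive layers by vertex-disjoint paths, consumes at most $O(k\log k)$ vertex-disjoint connections in total; strong $10^4 k\log k$-connectivity is exactly the budget that lets Menger supply them one at a time without exhausting connectivity. After the network has sorted the $k$ strands into the correct permutation, we attach the $y_i$'s and are done.

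The main obstacle is the bookkeeping of connectivity: each time we extract a vertex-disjoint path or gadget, the connectivity of the ``rest'' drops, and we must ensure it never falls below what the next routing step needs. I would handle this by a potential/accounting argument: fix in advance the total number $N=O(k\log k)$ of elementary vertex-disjoint connections the construction will request, start with connectivity $\ge N$ (absorbing the constant into $10^4$), and note that deleting the interior of one short path drops connectivity by at most its length, which we keep $O(1)$ per gadget, so after processing all layers we have used up at most $N$ units. A secondary subtlety is orienting the switch gadgets consistently — a comparator must be realisable regardless of which of the two incoming strands should end up ``on top'', and in a tournament one of the two crossing configurations might be blocked by the edge directions of a particular triple of vertices; this is resolved by choosing the gadget on $4$ or $5$ vertices rather than $3$, so that both wirings are always available (a strongly connected tournament on $\ge 3$ vertices contains a Hamilton path between any ordered pair of its vertices, and we pick enough vertices to force strong connectivity of the induced subtournament). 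Modulo these two points, the depth-$O(\log k)$ AKS network turns the exponential bound of Thomassen into the near-linear bound $10^4 k\log k$.
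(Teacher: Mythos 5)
Your high-level plan --- simulate an AKS sorting network with small switch gadgets inside the tournament and use Menger to produce paths in an uncontrolled permutation that the network then corrects --- is exactly the paper's strategy. The gap is in your connectivity budget: you propose to find the inter-layer connecting paths ``one at a time'' via Menger and assert that each such path costs $O(1)$ vertices, but Menger gives no bound on path length. A single Menger path could consume an unbounded number of internal vertices, after which the connectivity of the remainder is uncontrolled, and your potential/accounting argument never closes.

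The paper sidesteps this entirely. The switch gadgets are chained rather than laid out in physical layers joined by wires: in the iterative construction of the auxiliary digraph $D$ in Lemma~\ref{lem:linkagestructure}, the two output vertices of the current comparison become the two input vertices $z_s^{q-1},z_t^{q-1}$ of the next switch, so each comparison adds exactly $3$ fresh vertices and no connecting paths are needed at all. Moreover, each switch is found \emph{greedily using only a minimum out-degree condition} (Proposition~\ref{pr:switch} needs only $d^+\ge 7$ at the two inputs, which survives the deletion of the at most $3q+k$ vertices built so far), so Menger is never invoked during the construction of $D$. Menger is applied exactly once, at the very end, to $T''=T-(V(D)\setminus Z)$, which is still strongly $k$-connected because $|D|\le(3C'+1)k\log k$; it supplies $k$ disjoint $Z\to Y$ paths realizing some permutation $\pi$, and $D$ is then used to route $x_{\pi(i)}\to z_i$. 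Two smaller points: your ``clearing house'' picture, with routing on both sides of a central network, is not what happens --- the network $D$ is rooted directly at $X$, so no $X$-side routing paths are needed --- and your justification of the switch gadget via Hamilton paths/pancyclicity is replaced in the paper by a short counting argument on the bipartite digraph between two disjoint $3$-element out-neighbourhoods.
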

For small~$k$, the constant $10^4$ can easily be improved (see Theorem~\ref{thm:small}).
The proof of Theorem~\ref{thm:linkedness} is based on a fundamental result of Ajtai, Koml\'os and Szemer\'edi~\cite{AKS1,AKS2}
on the existence of asymptotically optimal sorting networks. Though their result is asymptotically optimal, 
it is not clear whether this is the case for Theorem~\ref{thm:linkedness}. In fact, 
for the case of (undirected) graphs, a deep result of Bollob\'as and Thomason~\cite{BT96} states that
every $22k$-connected graph is $k$-linked (this was improved to~$10k$ by Thomas and Wollan~\cite{ThomasWollan}).
Thus one might believe that a similar relation also holds in the case of tournaments:
\begin{conj} \label{conj:linkage}
There exists $C>0$ such that for all $k \in \mathbb{N}$ every strongly $Ck$-connected tournament is $k$-linked. 
\end{conj}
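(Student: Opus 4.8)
The plan is to follow the architecture behind Theorem~\ref{thm:linkedness} --- feed the $2k$ terminals into a bounded-size, well-structured \emph{core} and then realise the resulting permutation of tracks inside the core --- but to replace the logarithmic-depth routing step, which is exactly where the superfluous $\log k$ factor enters, by a scheme of constant depth. Fix a strongly $Ck$-connected tournament $T$ with terminals $s_1,\dots,s_k,t_1,\dots,t_k$. First, as in the proof of Theorem~\ref{thm:linkedness}, one fixes a large subtournament $T'$ that is still $\Omega(k)$-connected and equipped with a median order, chooses two disjoint sets $A=\{a_1,\dots,a_k\}$ and $B=\{b_1,\dots,b_k\}$ inside $T'$, and uses Menger's theorem in $T$ to obtain $k$ vertex-disjoint ``feeder'' paths from $\{s_1,\dots,s_k\}$ to $A$ and $k$ further vertex-disjoint feeder paths from $B$ to $\{t_1,\dots,t_k\}$, all internally disjoint from $T'$. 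Crucially, Menger gives no control over \emph{which} $s_i$ reaches \emph{which} $a_j$, so after this step one is left with the routing subproblem: link $a_j$ to $b_{\pi(j)}$ inside $T'$ for a permutation $\pi$ one cannot choose.

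In the Ajtai--Koml\'os--Szemer\'edi approach this subproblem is solved by embedding an $O(\log k)$-depth sorting network into $T'$, each comparator being realised by a short internally-disjoint gadget; the depth of the network is precisely the source of the $\log k$ in the connectivity bound, and since a routing network built from layers of constant-size gadgets provably requires logarithmic depth (the set of inputs influencing a fixed output position can at most double per layer), merely optimising that argument cannot yield a linear bound. The new ingredient I would pursue is that tournament vertices have \emph{unbounded} degree --- a single vertex can dominate, or be dominated by, a linear-sized set --- so one is not confined to bounded-fan-in layered networks. Concretely, I would try to prove that every strongly $Ck$-connected tournament contains a \emph{universal linking block}: a vertex set $X$ of size $O(k)$, together with a bounded number of auxiliary ``service'' layers, such that for \emph{every} perfect matching between a prescribed set of $k$ in-ports and $k$ out-ports one can route the $k$ connecting paths, each of bounded length, disjointly inside $X$ and its service layers. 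Plugging such a block in place of the sorting network would remove the logarithm.

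The main obstacle --- and, I believe, the reason the conjecture is still open --- is constructing this constant-size, constant-depth universal linking block inside a tournament that is only assumed to be $Ck$-connected. High connectivity in a tournament does \emph{not} imply robust expansion: a set $S$ may have exactly $m$ out-neighbours, each of them receiving only a single edge from $S$, so the robust-expander and absorbing machinery underlying Theorem~\ref{thm:mainresult} is not available off the shelf; one would first need to locate a subtournament that is simultaneously expanding and still $\Omega(k)$-connected, or find a genuine substitute for expansion inside the core. An alternative, more structural route would be to imitate the Bollob\'as--Thomason and Thomas--Wollan proofs for undirected graphs, where $10k$-connectivity is exploited by extracting a large ``highly linked tangle'' or clique-minor-like object and linking through it; transplanting the minor/tangle machinery to tournaments, which lack a comparably clean minor theory, is the crux of that approach. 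If the linking-block strategy can be made to work, one should obtain the conjectured bound $Ck$ with $C$ a moderate absolute constant, matching the undirected bound and the directed analogue of the lower bound of Proposition~\ref{thm:example}.
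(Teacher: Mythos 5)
The statement you were asked to prove is Conjecture~\ref{conj:linkage}, which the paper itself poses as an \emph{open} problem; the authors prove only the weaker Theorem~\ref{thm:linkedness}, with an extra $\log k$ factor. There is therefore no proof in the paper to compare against, and your write-up---to its credit---does not pretend to supply one. What you have given is a research plan plus an honest diagnosis of why it stalls, and I will evaluate it on those terms.

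Your account of where the $\log k$ comes from is essentially right: in the proof of Theorem~\ref{thm:linkedness} (via Lemma~\ref{lem:linkagestructure}), the $O(k\log k)$ vertices consumed by the embedded AKS sorting network are exactly what forces the connectivity requirement up to $\Theta(k\log k)$, and the lower-bound heuristic you quote---bounded fan-in layers can at most double the influence set per layer, so any such network has depth $\Omega(\log k)$---is a correct reason why merely tightening the AKS constants cannot reach $\Theta(k)$. Your observation that tournament vertices have unbounded degree, so that one is not a priori confined to constant-fan-in comparator networks, is the right place to look for a way out. Two small points of calibration: (1)~the paper applies Menger only once, from the final vertices $Z$ of the embedded network $D$ to $Y$, with the $x_i$ themselves serving as the initial layer of $D$; it does not run two separate feeder-path stages as you describe, though this is an inessential variation. (2)~The ``universal linking block'' you ask for---a set of size $O(k)$ with $O(1)$ service layers through which \emph{every} permutation of $k$ ports can be routed by bounded-length disjoint paths---is a strictly stronger object than a sorting network of linear size, and I see no reason to believe such a block must even exist inside a $Ck$-connected tournament; it would be prudent to first ask whether $\Theta(k)$ vertices can route an \emph{arbitrary} permutation at all, before asking that the block also be locatable.

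The genuine gap is precisely the one you name: neither the constant-depth universal linking block nor a tournament analogue of the Bollob\'as--Thomason / Thomas--Wollan tangle-and-minor machinery is constructed, and without one of them the argument never closes. Your remark that high connectivity in a tournament does not give robust expansion is a real obstruction and is worth keeping in mind, but it is not a proof that the linking-block route is impossible---only that the expander tools used elsewhere in the paper (for Theorem~\ref{thm:mainresult}) do not transfer for free. As submitted, this is a thoughtful problem statement and gap analysis, not a proof, and it should be presented as such.
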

Similarly, we believe that the logarithmic terms can also be removed in Theorems~\ref{thm:thomconj} and~\ref{thm:mainresult}:
\begin{conj} \label{conj:cycles} $ $
\begin{enumerate}
\item  There exists $C'>0$ such that for all $k \in \mathbb{N}$ every $C'k^2$-linked tournament contains $k$
edge-disjoint Hamilton cycles.
\item There exists $C''>0$ such that for all $k \in \mathbb{N}$ every strongly $C''k^2$-connected tournament contains $k$
edge-disjoint Hamilton cycles.
\end{enumerate} 
\end{conj}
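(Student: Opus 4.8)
The plan is to deduce part~(ii) from part~(i) together with Conjecture~\ref{conj:linkage}, and to concentrate the effort on part~(i). Indeed, if Conjecture~\ref{conj:linkage} holds with some constant $C_0$, then a strongly $C_0C'k^2$-connected tournament is $C'k^2$-linked, so applying part~(i) to it produces $k$ edge-disjoint Hamilton cycles and $C'':=C_0C'$ works in part~(ii). If instead one feeds part~(i) into the proven bound of Theorem~\ref{thm:linkedness}, one only obtains that strongly $O(k^2\log k)$-connected tournaments contain $k$ edge-disjoint Hamilton cycles, which still carries a logarithmic factor over the conjectured bound; so the genuine extra content of part~(ii) beyond part~(i) is precisely the linear linkedness conjecture, and I would attack the two in tandem. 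From here on I focus on part~(i).

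For part~(i), the source of the spurious $\log k$ in Theorem~\ref{thm:mainresult} is that the Hamilton cycles are built sequentially: having removed the first $i-1$ cycles one works in a tournament with $O(k)$ edges deleted at every vertex and must re-establish enough linkedness there, and the only available tool for that, Theorem~\ref{thm:linkedness}, carries the $\Theta(k\log k)$ overhead. The remedy I propose is to build all $k$ cycles simultaneously. \emph{Step A (extracting the bulk).} Show that a $C'k^2$-linked --- hence strongly $\Omega(k^2)$-connected --- tournament $T$ on $n$ vertices contains $k$ edge-disjoint spanning subdigraphs $F_1,\dots,F_k$, where each $F_i$ is a union of at most $O(k)$ vertex-disjoint directed paths covering $V(T)$, and where the multiset of all endpoints of all these paths uses every vertex of $T$ only $O(1)$ times; this should follow from an iterated Menger / directed edge-splitting argument, the point being to keep the total number of path-endpoints at $O(k^2)$ rather than $O(k^2\log k)$. \emph{Step B (simultaneous linking).} For each $i$, close $F_i$ into a single Hamilton cycle by routing its $O(k)$ connecting paths through the leftover digraph $T':=T-\bigcup_i E(F_i)$, doing this for all $i$ at once with all connecting paths pairwise internally disjoint. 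The union of the $k$ linkage instances has $O(k^2)$ terminal pairs, $T'$ is a tournament with only $O(k)$ edges removed at each vertex and so is $\Omega(k^2)$-linked under Conjecture~\ref{conj:linkage} (or $\Omega(k^2/\log k)$-linked unconditionally by Theorem~\ref{thm:linkedness}), and a single application of a ``$k$-fold'' or parallel linkedness statement for tournaments then finishes the proof.

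The main obstacle is Step~B, specifically establishing the required parallel-linkedness statement --- that a strongly $Ck^2$-connected tournament admits $k$ internally disjoint solutions to $k$ separate linkage problems, each of bounded-in-$k$ size --- without iterating Theorem~\ref{thm:linkedness}, which would reinstate the $\log k$ factor immediately. This appears to be essentially as hard as Conjecture~\ref{conj:linkage} itself: the Ajtai--Koml\'os--Szemer\'edi sorting-network input to Theorem~\ref{thm:linkedness} seems to demand the logarithmic factor structurally, so what is really needed is either a genuinely different route to linkedness in tournaments, or a subtle amortization showing that the $k$ stagewise $\log k$-losses aggregate to only $O(\log k)$ in total and can then be absorbed into the constant $C'$ in the $C'k^2$-linkedness hypothesis. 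A secondary point worth checking is whether the extremal construction behind Proposition~\ref{thm:example} can be sharpened to show that some logarithmic factor is in fact forced in part~(i) for some range of $k$; if so, the statement would have to be amended rather than proved.
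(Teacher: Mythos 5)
The statement you are asked about is Conjecture~\ref{conj:cycles}; the paper does not prove it and offers no argument beyond the one-line remark that Conjecture~\ref{conj:linkage} together with part~(i) implies part~(ii). Your deduction of (ii) from (i) via Conjecture~\ref{conj:linkage} reproduces exactly that remark and is fine. Everything else in your proposal is a research plan rather than a proof, and it contains both an acknowledged hole and a misdiagnosis.

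The acknowledged hole is Step~B: you yourself say the ``parallel linkedness'' statement you need is essentially as hard as Conjecture~\ref{conj:linkage}, so the plan is circular at its core and does not establish part~(i). Step~A is also unsubstantiated --- directed edge-splitting preserves edge-connectivity, not vertex-connectivity, and the paper's introduction recalls Thomassen's observation that strong edge-connectivity cannot replace strong connectivity here; moreover, controlling the multiplicity of path-endpoints across $k$ edge-disjoint spanning path systems is not something Menger or Gallai--Milgram gives you. More importantly, your diagnosis of where the $\log k$ in Theorem~\ref{thm:mainresult} comes from is wrong. Theorem~\ref{thm:linkedness} is not invoked inside the proof of Theorem~\ref{thm:mainresult} at all (that theorem assumes linkedness directly, and Proposition~\ref{prop:robustlinkage} shows linkedness degrades only additively when $O(k)$ edges per vertex are removed). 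The logarithmic factor enters through the parameter $c=\Theta(\log k)$: the transitive almost-dominating sets must have size $\Theta(\log k)$ so that, via the halving argument of Lemma~\ref{lem:out-dom-trans} and the bound~(\ref{eq:sizeEi+}), the exceptional sets are small enough relative to $d_-$ and $d_+$; with $\Theta(k^2)$ such sets this forces $|A^*\cup B^*|=\Theta(k^2\log k)$ and hence the linkedness hypothesis. The paper's Section~\ref{concluding} states this explicitly. So the genuine obstacle to part~(i) is reducing the dominating-set size below $\log$, not parallelizing the cycle extraction; your plan does not address that obstacle, and the conjecture remains open.
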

Note that Conjectures~\ref{conj:linkage} and~\ref{conj:cycles}(i) together imply Conjecture~\ref{conj:cycles}(ii).

\subsection{Algorithmic aspects}
Both Hamiltonicity and linkedness in tournaments have also been studied from an algorithmic perspective.
Camion's theorem implies that the Hamilton cycle problem (though NP-complete in general) is solvable in polynomial time for tournaments.
Chudnovsky, Scott and Seymour~\cite{CSS} solved a long-standing problem of Bang-Jensen and Thomassen~\cite{BJT} by showing that the linkedness problem is 
also solvable in polynomial time for tournaments. More precisely, for a given tournament on $n$ vertices, one can determine in time
polynomial in $n$ whether it is $k$-linked and if yes, one can produce a corresponding set of $k$ paths (also in polynomial time).
Fortune, Hopcroft and Wyllie~\cite{linknp} showed that for general digraphs, the problem is NP-complete even for $k=2$.%
\COMMENT{check original paper VP: Lemma 3 of that paper gives NP-completeness of 2-linkage problem} 
We can use the result in~\cite{CSS} to obtain an algorithmic version of Theorem~\ref{thm:mainresult}. 
More precisely, given
a $C'k^2 \log k$-linked tournament on $n$ vertices, one can find $k$ edge-disjoint
Hamilton cycles in time polynomial in $n$ (where $k$ is fixed).
We discuss this in more detail in Section~\ref{concluding}.%
\COMMENT{so we should say at the end of the paper that we use CSS when we apply linkage to get paths}
Note that this immediately results in an algorithmic version of Theorem~\ref{thm:thomconj}.

\subsection{Related results and spanning regular subgraphs}\label{relresults}
Proposition~\ref{thm:example} actually suggests that the `bottleneck' to finding $k$ edge-disjoint Hamilton cycles is 
the existence of a $k$-regular subdigraph: 
it states that if the connectivity of a tournament $T$ is significantly lower than in Theorem~\ref{thm:thomconj}, then
$T$ may not even contain a spanning $k$-regular subdigraph.
There are other results which exhibit this phenomenon:
if $T$ is itself regular, then Kelly's conjecture from 1968 states that $T$ itself has a Hamilton decomposition.
Kelly's conjecture was proved very recently (for large tournaments) by K\"uhn and Osthus~\cite{Kelly}.

Erd\H{o}s raised a `probabilistic' version of Kelly's conjecture:
for a tournament $T$, let $\delta^0(T)$ denote the minimum of the minimum out-degree and the minimum in-degree.
He conjectured that for almost all tournaments $T$, the maximum number of edge-disjoint Hamilton cycles in $T$ 
is exactly $\delta^0(T)$. In particular, this would imply that with high probability, $\delta^0(T)$ is also the degree of
a densest spanning regular subdigraph in a random tournament~$T$.
This conjecture of Erd\H{o}s was proved by K\"uhn and Osthus~\cite{KellyII}, based on the main result in~\cite{Kelly}.

It would be interesting to obtain further conditions which relate the degree of the densest spanning regular subdigraph of a tournament~$T$
to the number of edge-disjoint Hamilton cycles in~$T$.
For undirected graphs, one such conjecture was made in~\cite{KLOmindeg}: it states that for any graph~$G$ satisfying the conditions of 
Dirac's theorem, the number of edge-disjoint Hamilton cycles in $G$ is exactly half the degree of a densest spanning even-regular subgraph of~$G$.
An approximate version of this conjecture was proved by Ferber, Krivelevich and Sudakov~\cite{FKS}, see e.g.~\cite{KLOmindeg,KellyII} for some related results.

\subsection{Organization of the paper}
The methods used in the current paper are quite different from those used e.g.~in the papers mentioned in Section~\ref{relresults}.
A crucial ingredient is the construction of highly structured dominating sets (see Section~\ref{sec:proofsketch} for an informal description).
We believe that this approach will have further applications.

In the next section, we introduce the notation that will
be used for the remainder of the paper. In Section~\ref{sec:proofsketch}, 
we give an overview of the proof of Theorem~\ref{thm:mainresult}.
In Sections~\ref{sec:linkedness} and~\ref{sec:example}, we give the relatively
short proofs of Theorem~\ref{thm:linkedness} and Proposition~\ref{thm:example}.
In Section~\ref{sec:onecycle},
we show that given a `linked domination structure' (as introduced in the proof sketch), 
we can find a single Hamilton cycle (Lemma~\ref{lem:mainengine}). In Section~\ref{sec:manycycles},
we show that given several suitable linked domination structures, we can repeatedly
apply Lemma~\ref{lem:mainengine} to find $k$ edge-disjoint Hamilton
cycles. In Section~\ref{sec:linkagegood} we show that any
highly linked tournament contains such suitable linked domination structures.
Finally, Section~\ref{concluding} contains some concluding remarks.

\section{Notation\label{sec:notation}}

The digraphs considered in this paper do not have loops and we allow up to two edges between any pair of $x$, $y$ of distinct vertices,
at most one in each direction. A digraph is an \emph{oriented graph} if there is at most one edge between any pair $x$, $y$ of distinct vertices,
i.e.~if it does not contain a cycle of length two. 

Given a digraph $D$, we write $V(D)$ for its vertex set, $E(D)$ for its edge set, $e(D):=|E(D)|$ for the number of its edges and $|D|$ for its \emph{order}, i.e.~for
the number of its vertices. We write $H \subseteq D$ to mean that $H$ is a subdigraph of $D$, i.e.\ $V(H) \subseteq V(D)$ and $E(H) \subseteq E(D)$.%
	\COMMENT{VP: added last sentence because it's used in Section~\ref{sec:linkedness}}
 Given $X\subseteq V(D)$, we write $D-X$ for the digraph obtained from $D$ by deleting all vertices in~$X$,
and $D[X]$ for the subdigraph of $D$ induced by~$X$.
Given $F\subseteq E(D)$, we write $D- F$ for the digraph obtained from $D$ by deleting all edges in~$F$.
We write $V(F)$ for the set of all endvertices of edges in~$F$.
If $H$ is a subdigraph of $D$, we write $D- H$ for $D- E(H)$.

We write $xy$ for an edge directed from $x$ to $y$. Unless stated otherwise, when we refer to paths and cycles in digraphs, we mean
directed paths and cycles, i.e.~the edges on these paths and cycles are oriented consistently.
Given a path $P=x\dots y$ from $x$ to $y$ and a vertex $z$ outside $P$ which sends an edge to $x$, we write $zxP$ for the path obtained from
$P$ by appending the edge $zx$. The \emph{length} of a path or cycle is the number of its edges. 
We call the terminal vertex of a path $P$ the \emph{head} of $P$ and denote it by $h(P)$.
Similarly, we call the initial vertex of a path $P$ the \emph{tail} of $P$ and denote it by $t(P)$.
The \emph{interior} $\Int(P)$ of a path $P$ is the subpath obtained by deleting $t(P)$ and $h(P)$.
Thus $\Int(P)=\emptyset$ if $P$ has length at most one. Two paths $P$ and $P'$ are \emph{internally disjoint}
if $P\neq P'$ and $V(\Int(P))\cap V(\Int(P'))=\emptyset$.%
   \COMMENT{need to add that $P\neq P'$ since otherwise $xy$ and $xy$ would be internally disjoint}
A \emph{path system} $\mathcal{P}$ is a collection of vertex-disjoint paths. We write $V(\mathcal{P})$ for the
set of all vertices lying on paths in $\mathcal{P}$ and $E(\mathcal{P})$ for the
set of all edges lying on paths in $\mathcal{P}$. We write $h(\mathcal{P})$ for the set consisting of%
   \COMMENT{TO DO: do we ever use this if $\mathcal{P}$ is not a path system, ie if the paths in $\mathcal{P}$
are not disjoint? JL: Not before section 7, still checking.}
the heads of all paths in $\mathcal{P}$ and $t(\mathcal{P})$ for the set consisting of
the tails of all paths in $\mathcal{P}$. If $v\in V(\mathcal{P})$, we write $v^{+}$ and
$v^{-}$ for the successor and predecessor of $v$ on
the path in $\mathcal{P}$ containing~$v$. A path system $\mathcal{P}$ is a \emph{path cover} of a directed graph $D$ if
every path in $\mathcal{P}$ lies in~$D$ and together the paths in $\mathcal{P}$ cover all the vertices of $D$.
If $X\subseteq V(D)$ and $\mathcal{P}$ is a path cover of $D[X]$, we sometimes also say that
$\mathcal{P}$ is a path cover of $X$.

If $x$ is a vertex of a digraph $D$, then $N^+_D(x)$ denotes the \emph{out-neighbourhood} of $x$, i.e.~the
set of all those vertices $y$ for which $xy\in E(D)$. Similarly, $N^-_D(x)$ denotes the \emph{in-neighbourhood} of $x$, i.e.~the
set of all those vertices $y$ for which $yx\in E(D)$. We write $d^+_D(x):=|N^+_D(x)|$ for the \emph{out-degree} of $x$ and
$d^-_D(x):=|N^-_D(x)|$ for its \emph{in-degree}. We denote the \emph{minimum out-degree} of $D$ by $\delta^+(D):=\min \{d^+_D(x): x\in V(D)\}$
and the \emph{maximum out-degree} of $D$ by $\Delta^+(D):=\max \{d^+_D(x): x\in V(D)\}$. We define the
\emph{minimum in-degree} $\delta^-(D)$ and the \emph{maximum in-degree} $\Delta^-(D)$ similarly.
The \emph{minimum degree} of $D$ is defined by $\delta(D):=\min\{d^+_D(x)+d^-_D(x): x\in V(D)\}$
and its \emph{minimum semi-degree} by $\delta^0(D):=\min\{\delta^+(D), \delta^-(D)\}$.
Whenever $X,Y\subseteq V(D)$ are disjoint, we write $e_D(X)$ for the number of edges of $D$ having both endvertices in~$X$, and
$e_D(X,Y)$ for the number of edges of $D$ with tail in $X$ and head in $Y$. 
We write $N^+_D(X):=\bigcup_{x\in X} N^+_D(x)$ and define $N^-_D(X)$ similarly.
In all these definitions we often omit the subscript $D$ if the digraph $D$ is clear from the context.

A digraph $D$ is \emph{strongly connected} if for all $x,y \in V(D)$, there is a directed path in
$D$ from $x$ to $y$. Given $k\in\mathbb{N}$, we say a digraph is \emph{strongly $k$-connected} if $|D|>k$ and
for every $S \subseteq V(D)$ of size at most $k-1$, $D-S$ is strongly connected.
We say a digraph $D$ is \emph{$k$-linked}
if $|D|\ge2k$ and whenever $x_{1},\dots,x_{k},y_{1},\dots,y_{k}$
are $2k$ distinct vertices of $D$, there exist vertex-disjoint paths
$P_{1},\dots,P_{k}$ such that $P_{i}$ is a path from $x_{i}$ to $y_{i}$.

Given a digraph $D$ and sets $X,Y\subseteq V(D)$, we say that $X$ \emph{in-dominates} $Y$ if each vertex in~$Y$
is an in-neighbour of some vertex in~$X$. Similarly, we say that $X$ \emph{out-dominates} $Y$ if each vertex in~$Y$
is an out-neighbour of some vertex in~$X$.\COMMENT{Note for JL and VP: This is \bf{different} from the definition we used earlier.}

A tournament $T$ is \emph{transitive}
if there exists an ordering $v_1,\dots, v_n$ of its vertices such that $v_iv_j\in E(T)$
if and only if $i<j$. In this case, we often say that $v_1$ is the \emph{tail} of $T$ and
$v_n$ is the \emph{head} of $T$.

Given $k\in\mathbb{N}$, we write $[k]:=\{1,\dots,k\}$. We write $\log$ for the binary logarithm and $\log^2 n:=(\log n)^2$.

\section{Sketch of the proof of Theorem~\ref{thm:mainresult}\label{sec:proofsketch}}

In this section, we give an outline of the proof of Theorem~\ref{thm:mainresult}.
An important idea is the notion of a `covering edge'.
Given a small  (pre-determined) set~$S$ of vertices in a tournament $T$, this will mean that it will suffice to find a cycle 
covering all vertices of $T-S$. 
More precisely, let $T$ be a tournament, let $x\in V(T)$, and
suppose $C$ is a cycle in $T$ covering $T-x$. If $yz\in E(C)$
and $yx,xz\in E(T)$, then we can replace $yz$ by $yxz$ in $C$
to turn $C$ into a Hamilton cycle. We call $yz$ a \emph{covering
edge} for $x$. More generally, if $S\subseteq V(T)$ and $C$ is a cycle
in $T$ spanning $V(T)-S$ such that $C$ contains a covering edge for
each $x\in S$, then we can turn $C$ into a Hamilton cycle by using
all these covering edges. Note that this idea still works  if
$C$ covers some part of $S$. On the other hand, note that~$S$ needs to be fixed at the beginning
-- this is different than in the recently popularized `absorbing method'.

Another important tool will be the following consequence of the Gallai-Milgram theorem:
suppose that $G$ is an oriented graph on $n$ vertices with $\delta(G)\ge n-\ell$.
Then the vertices of $G$ can be covered with $\ell$ vertex-disjoint paths. 
We use this as follows: suppose we are given a highly linked tournament $T$ and have already
found  $i$ edge-disjoint Hamilton cycles in~$T$.
Then the Gallai-Milgram theorem implies that we can cover the
vertices of the remaining oriented graph by a set of  $2i$ vertex-disjoint paths.
Very roughly, the aim is to link together these paths using the high linkedness of the original tournament $T$.

To achieve this aim, we introduce and use the idea of `transitive dominating sets'. Here
a transitive out-dominating set $A_\ell$ has the following properties:
\begin{itemize}
\item $A_\ell$ out-dominates $V(T) \setminus A_\ell$, i.e.~every vertex of $V(T) \setminus A_\ell$ receives an edge from~$A_\ell$.
\item $A_\ell$ induces a transitive tournament in $T$.
\end{itemize} 
Transitive in-dominating sets $B_\ell$ are defined similarly.

Now suppose that we have already found $i$ edge-disjoint Hamilton cycles in a highly linked tournament~$T$.
Let $T'$ be the oriented subgraph of $T$ obtained by removing the edges of these
Hamilton cycles.
Suppose that we also have the following `linked dominating structure' in $T'$, which consists of:
\begin{itemize}
\item small disjoint transitive out-dominating sets $A_1,\dots,A_t$, where $t:=2i+1$;
\item small disjoint transitive in-dominating sets $B_1,\dots,B_t$;
\item a set of short vertex-disjoint paths $P_1,\dots,P_t$, where each $P_\ell$ is a path from the head $b_\ell$ of $B_\ell$
to the tail $a'_\ell$ of $A_\ell$.
\end{itemize}
Recall that the head of a transitive tournament is the vertex of out-degree zero and
the tail is defined analogously.
The paths $P_\ell$ are found at the outset of the proof,  
using the assumption that the original tournament~$T$ is highly linked.
(Note that $T'$ need not be highly linked.)

Let $A^*$ denote the union of the $A_i$ and let $B^*$ denote the union of the $B_i$.
Note that $\delta(T'-A^* \cup B^*) \ge n-1-2i=n-t$. 
So the Gallai-Milgram theorem implies that we can cover the vertices of $T'-A^* \cup B^*$ with $t$
vertex-disjoint paths $Q_1,\dots,Q_t$. Now we can link up successive paths using the above dominating sets
as follows. The final vertex of $Q_1$ sends an edge to some vertex $b$ in $B_2$ (since $B_2$ is in-dominating).
Either $b$ is equal to the head $b_2$ of $B_2$ or
there is an edge in $T'[B_2]$ from $b$ to  $b_2$ (since $T'[B_2]$ is a transitive tournament).
Now follow the path $P_2$ from $b_2$ to the tail $a'_2$ of $A_2$. Using the fact that $T'[A_2]$ is transitive 
and that $A_2$ is out-dominating, we can similarly find a path of length at most two  
from $a'_2$ to the initial vertex of $Q_2$.
Continuing in this way, we can link up all the paths $Q_\ell$ and $P_\ell$ into a single cycle $C$ which covers all vertices outside
$A^* \cup B^*$ (and some of the vertices inside $A^* \cup B^*$). The idea is illustrated in Figure~\ref{fig:sketchproof}.
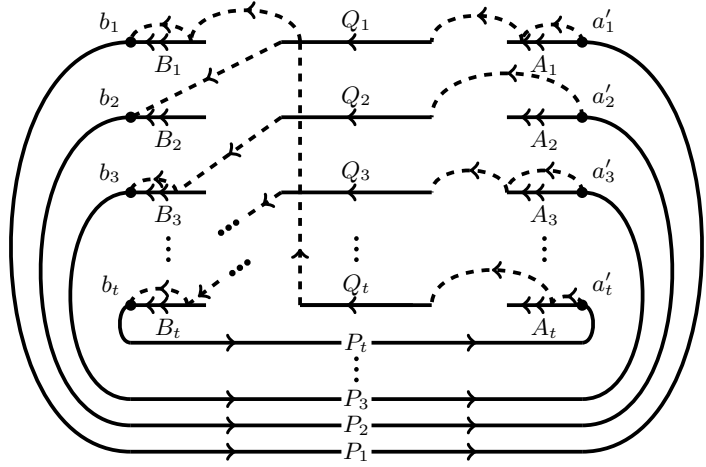
\begin{figure}
\centering\footnotesize
\begin{tikzpicture}[scale=1, line width = 0.5mm]
\tikzstyle{every node}=[font=\footnotesize]
\draw [-<<-] (0,0.5) -- (1,0.5); %Draw and label B_1, ..., B_t
\draw [-<<-] (0,2) -- (1,2);
\draw [-<<-] (0,3) -- (1,3);
\draw [-<<-] (0,4) -- (1,4);

\node [draw=none, below] at (0.5, 0.45) {$B_t$};
\node [draw=none, below] at (0.5, 1.95) {$B_3$};
\node [draw=none, below] at (0.5, 2.95) {$B_2$};
\node [draw=none, below] at (0.5, 3.95) {$B_1$};

\draw [-<-] (2.25,0.5) -- (3.75,0.5); %Draw and label Q_1, ..., Q_t
\draw (2.25, 0.5) -- (4, 0.5);
\draw [-<-] (2,2) -- (4,2);
\draw [-<-] (2,3) -- (4,3);
\draw [-<-] (2,4) -- (4,4);

\node [draw=none, above] at (3, 0.5) {$Q_t$};
\node [draw=none, above] at (3, 2) {$Q_3$};
\node [draw=none, above] at (3, 3) {$Q_2$};
\node [draw=none, above] at (3, 4) {$Q_1$};

\draw [-<<-] (5,0.5) -- (6,0.5); %Draw and label A_1, ..., A_t
\draw [-<<-] (5,2) -- (6,2);
\draw [-<<-] (5,3) -- (6,3);
\draw [-<<-] (5,4) -- (6,4);

\node [draw=none, below] at (5.5, 0.45) {$A_t$};
\node [draw=none, below] at (5.5, 1.95) {$A_3$};
\node [draw=none, below] at (5.5, 2.95) {$A_2$};
\node [draw=none, below] at (5.5, 3.95) {$A_1$};

\draw (0,0.5) to [out=180, in=180] (0, 0); %Draw and label P_1, ..., P_t
\draw (0,2) to [out=180, in=180] (0, -0.75);
\draw (0,3) to [out=180, in=180] (0, -1.1);
\draw (0,4) to [out=180, in=180] (0, -1.45);

\draw [->-] (0, 0) -- (2.8, 0);
\draw [->-] (0, -0.75) -- (2.8, -0.75);
\draw [->-] (0, -1.1) -- (2.8, -1.1);
\draw [->-] (0, -1.45) -- (2.8, -1.45);

\node[draw=none] at (3,0) {$P_t$};
\node[draw=none, font=\Huge] at (3, -0.25) {$\vdots$};
\node[draw=none] at (3,-0.75) {$P_3$};
\node[draw=none] at (3,-1.1) {$P_2$};
\node[draw=none] at (3,-1.45) {$P_1$};

\draw [->-] (3.2, 0) -- (6, 0);
\draw [->-] (3.2, -0.75) -- (6, -0.75);
\draw [->-] (3.2, -1.1) -- (6, -1.1);
\draw [->-] (3.2, -1.45) -- (6, -1.45);

\draw (6,0) to [out=0, in=0] (6, 0.5);
\draw (6,-0.75) to [out=0, in=0] (6, 2);
\draw (6,-1.1) to [out=0, in=0] (6, 3);
\draw (6,-1.45) to [out=0, in=0] (6, 4);

\node [draw=none, above left] at (-0, 4) {$b_1$}; %Label b_is
\node [draw=none, above left] at (-0, 3) {$b_2$};
\node [draw=none, above left] at (-0, 2) {$b_3$};
\node [draw=none, above left] at (-0, 0.5) {$b_t$};

\node [draw=none, above right] at (6, 4) {$a_1'$}; %Label a_i's
\node [draw=none, above right] at (6, 3) {$a_2'$};
\node [draw=none, above right] at (6, 2) {$a_3'$};
\node [draw=none, above right] at (6, 0.5) {$a_t'$};

%Draw endpoints of A_1, ..., A_t, B_1, ..., B_t
\node [circle, inner sep = 1.5pt, outer sep = 0pt, fill = black] at (0, 0.5) {};
\node [circle, inner sep = 1.5pt, outer sep = 0pt, fill = black] at (0, 2) {};
\node [circle, inner sep = 1.5pt, outer sep = 0pt, fill = black] at (0, 3) {};
\node [circle, inner sep = 1.5pt, outer sep = 0pt, fill = black] at (0, 4) {};
\node [circle, inner sep = 1.5pt, outer sep = 0pt, fill = black] at (6, 0.5) {};
\node [circle, inner sep = 1.5pt, outer sep = 0pt, fill = black] at (6, 2) {};
\node [circle, inner sep = 1.5pt, outer sep = 0pt, fill = black] at (6, 3) {};
\node [circle, inner sep = 1.5pt, outer sep = 0pt, fill = black] at (6, 4) {};

\node[draw=none, font=\Huge] at (0.5,1.35) {$\vdots$}; %Add ellipses
\node[draw=none, font=\Huge] at (3,1.35) {$\vdots$};
\node[draw=none, font=\Huge] at (5.5,1.35) {$\vdots$};

%Draw extension lines on left hand side
\draw [->-, dashed] (2, 4) -- (0, 3);
\draw [->-, dashed] (2, 3) -- (0.6, 2);
\draw [->-, dashed] (0.6, 2) to [out=90, in=90] (0, 2);
\draw [->-, dashed] (2, 2) -- (1.5, 1.66);
\path (2, 2) -- (1, 1.33) 
	node[pos=0.6, draw, circle, inner sep = 0.25pt, outer sep = 0pt, fill = black]{} 
	node[pos=0.7, draw, circle, inner sep = 0.25pt, outer sep = 0pt, fill = black]{} 
	node[pos=0.8, draw, circle, inner sep = 0.25pt, outer sep = 0pt, fill = black]{};
\draw [-<-, dashed] (0.75, 0.5) -- (1.25, 0.88);
\path (0.75, 0.5) -- (1.75, 1.21) 
	node[pos=0.6, draw, circle, inner sep = 0.25pt, outer sep = 0pt, fill = black]{} 
	node[pos=0.7, draw, circle, inner sep = 0.25pt, outer sep = 0pt, fill = black]{} 
	node[pos=0.8, draw, circle, inner sep = 0.25pt, outer sep = 0pt, fill = black]{};
\draw [->-, dashed] (0.75, 0.5) to [out=90, in=90] (0, 0.5);

%Draw extension line from bottom on left
\draw [->-, dashed] (2.25, 0.5) -- (2.25, 2);
\draw [dashed] (2.25, 2) -- (2.25, 4);
\draw [->-, dashed] (2.25, 4) to [out=90, in=90] (0.8, 4);
\draw [->-, dashed] (0.8, 4) to [out=90, in=90] (0, 4);

%Draw extension lines on right hand side
\draw [->-, dashed] (6, 4) to [out=90, in=90] (5.2, 4);
\draw [->-, dashed] (5.2, 4) to [out=90, in=90] (4, 4);
\draw [->-, dashed] (6, 3) to [out=90, in=90] (4, 3);
\draw [->-, dashed] (6, 2) to [out=90, in=90] (5, 2);
\draw [->-, dashed] (5, 2) to [out=90, in=90] (4,2);
\draw [->-, dashed] (6, 0.5) to [out=90, in=90] (5.6, 0.5);
\draw [->-, dashed] (5.6, 0.5) to [out=90, in=90] (4, 0.5);
\end{tikzpicture}

\caption{\footnotesize Illustrating the paths $Q_i$ and $P_i$ as well as the edges linking them up via the linked domination structure.}
\label{fig:sketchproof}
\end{figure}

In our construction, we will ensure that the paths $P_\ell$ contain a set of covering edges for $A^* \cup B^*$.
So $C$ also contains covering edges for $A^* \cup B^*$, and so we can transform $C$ into a Hamilton cycle as discussed earlier.

A major obstacle to the above strategy is that in order to guarantee the $P_\ell$ in $T'-A^* \cup B^*$, we would need
the linkedness of $T$ to be significantly larger than~$|A^* \cup B^*|$ (and thus larger than $|A_\ell|$).
However, there are many tournaments where any in- or out-dominating set contains $\Omega(\log n)$ vertices
(consider a random tournament).
This leads to a linkage requirement on $T$ which depends on $n$ (and not just on $k$, as required in Theorem~\ref{thm:mainresult}).

We overcome this problem by considering `almost dominating sets': instead of out-dominating all vertices outside $A_\ell$, the $A_\ell$
will out-dominate almost all vertices outside $A_\ell$. (Analogous comments apply to the in-dominating sets $B_\ell$.)
This means that we have a small `exceptional set' $E$ of vertices which are not out-dominated by all of the $A_\ell$.
The problem with allowing
an exceptional set is that if the tail of a path $Q_\ell$ in our cover is
in the exceptional set $E$, we cannot extend it directly into the out-dominating
set $A_\ell$ as in the above description. 
However, if we make sure that the $A_\ell$ include the vertices of smallest in-degree of~$T$, we can deal with this issue. 
Indeed, in this case we can show that every vertex $v \in E$ has in-degree $d^-(v)>2|E|$ say,
so we can always extend the tail of a path out of the exceptional set if necessary (and then into an almost out-dominating set $A_\ell$ as before). 
Unfortunately, we may `break' one
of the paths $P_\ell$\COMMENT{Forget to run this by you -- this used to be a $P_i$. -JL} in the process. However, if we are
careful about the place where we break it and construct some `spare' paths at the outset, it turns
out that the above strategy can be made to work.

\section{Connectivity and linkedness in tournaments}\label{sec:linkedness}%\FloatBarrier
In this section we give the proof of Theorem~\ref{thm:linkedness}.
We will also collect some simple properties of highly linked directed graphs which we will use later on. 
The proof of Theorem~\ref{thm:linkedness} is based on an important result of Ajtai, Koml{\'o}s and Szemer{\'e}di~\cite{AKS1,AKS2}
on sorting networks. Roughly speaking, the proof idea of 
Theorem~\ref{thm:linkedness} is as follows. Suppose that we are given a highly connected tournament $T$
and we want to link an ordered set $X$ of $k$ vertices to a set $Y$ of the same size. Then we construct the equivalent of a sorting network $D$ inside $T-Y$
with `initial vertices' in $X$ and `final vertices' in a set $Z$. The high connectivity of $T$ guarantees an `unsorted' set of $k$ 
$ZY$-paths which avoid the vertices in $D-Z$. One can then extend these paths via 
$D$ to the appropriate vertices in $X$. In this way, we obtain paths linking the vertices in $X$ to the appropriate ones in $Y$. An example is shown in Figure~\ref{fig:linkfig}.
\begin{figure}
\centering\footnotesize
\includegraphics[scale=0.06]{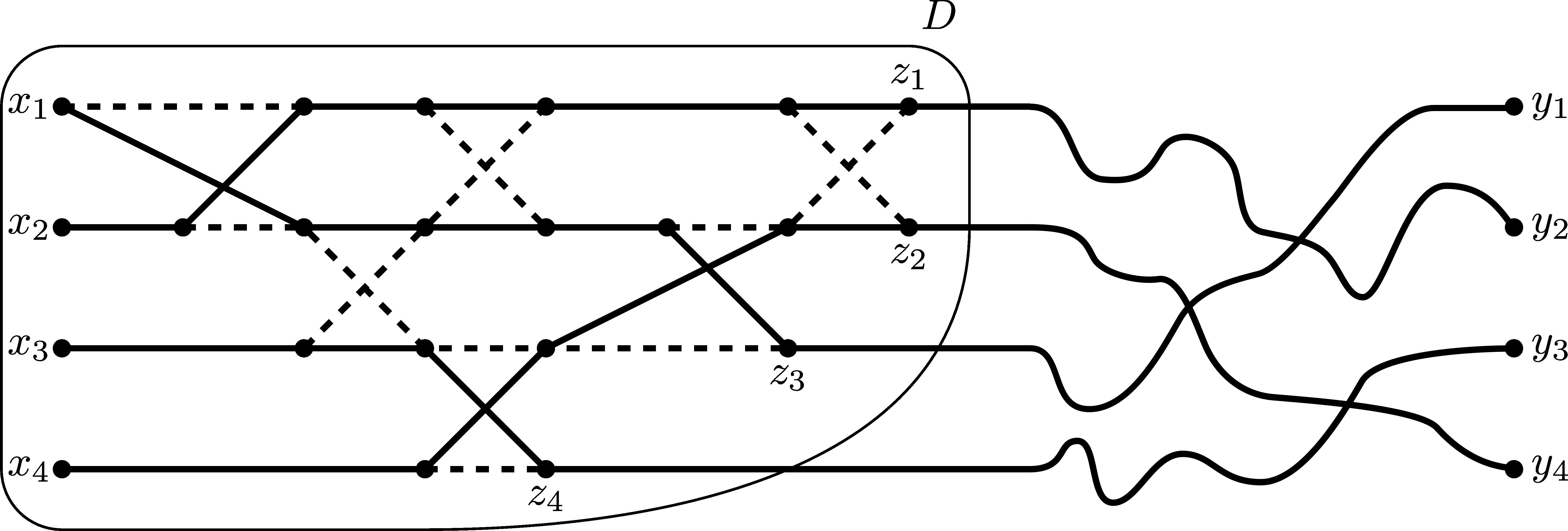}
\caption{\footnotesize Illustrating our construction of a digraph $D$ which corresponds to a sorting network for $k=4$.
$D$ is used to link $x_i$ to $y_i$. In the notation of the proof of Theorem~\ref{thm:linkedness}, we have $\pi(3)=1$.}
\label{fig:linkfig}
\end{figure}

We now introduce the necessary background on non-adaptive sorting algorithms and sorting networks; see~\cite{Knuth} for a more
detailed treatment.
In a sorting problem, we are given $k$ registers $R_1, \ldots, R_k$, and each register $R_i$ is assigned a distinct element from $[k]$,
which we call the \emph{value} of $R_i$; thus there is some permutation $\pi$ of $[k]$ such that value $i$ has been assigned to register $R_{\pi(i)}$.
Our task is to sort the values into their corresponding registers (so that value $i$ is assigned to $R_i$) by making a sequence of comparisons:
a comparison entails taking two registers and reassigning their values so that the higher value is assigned to the higher register
and the lower value to the lower register. A non-adaptive sorting algorithm is a sequence of comparisons specified in advance such
that for any initial assignment of $k$ values to $k$ registers, applying the prescribed sequence of comparisons results in every
value being assigned to its corresponding register. 
%For example, bubble sort is a non-adaptive sorting algorithm that uses $k(k-1)/2$ comparisons to sort $k$ elements.

Ajtai, Koml{\'o}s and Szemer{\'e}di \cite{AKS1,AKS2} proved, via the construction of sorting networks, 
that there exists an absolute constant $C'$ and a non-adaptive sorting algorithm (for $k$ registers and values) that requires $C'k \log k$ comparisons,
and this is asymptotically best possible. It is known that we can take $C':=3050$ \cite{paterson}%
    \COMMENT{Our $log$ is binary - the one in Paterson's paper is binary too. So that's fine.}
(results of this type are often stated in terms of the \emph{depth} of a sorting network rather than the number of comparisons).

The next theorem is a consequence of the above. Before we can state it, we first need to introduce some notation.
A comparison $c$, which is part of some non-adaptive sorting algorithm for $k$ registers, will be denoted by $c=(s;t)$, where $1 \leq s < t \leq k$, to indicate
that $c$ is a comparison in which the values of registers $R_s$ and $R_t$ are compared (and sorted so the higher value
is assigned to the higher register).

\begin{thm}\label{lem:sorting}
Let $C':=3050$ and $k \in \mathbb{N}$ be such that $k\ge 2$. Then there exist $r\le C'k\log k$ and a sequence of comparisons $c_1, \ldots, c_r$ 
satisfying the following property: for any initial assignment of $k$ values to $k$ registers, 
applying the comparisons in sequence results in register $R_i$ being assigned the value $i$ for all $i\in [k]$.
\end{thm}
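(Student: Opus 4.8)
The plan is to obtain Theorem~\ref{lem:sorting} as a direct translation of the Ajtai--Koml\'os--Szemer\'edi sorting network (in Paterson's explicit form) into the sequential-comparison formalism of the statement. Recall that such a sorting network on $k$ wires is organised into \emph{layers}, each layer being a collection of comparators acting on pairwise disjoint pairs from $[k]$, with the \emph{depth} being the number of layers. The first observation I would make is that the comparators within one layer can be performed sequentially, in any order, with exactly the same effect on the register contents as performing them in parallel, since they touch disjoint registers. Hence, listing the comparators of the network layer by layer (and in an arbitrary order inside each layer) produces a single sequence $c_1,\dots,c_r$ of comparisons whose cumulative effect is identical to running the network; since each layer contributes at most $\lfloor k/2\rfloor$ comparisons, $r$ is at most the depth of the network times $\lfloor k/2\rfloor$.

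To bound the depth I would invoke~\cite{AKS1,AKS2} together with Paterson's explicit version~\cite{paterson}: for every $k\ge 2$ there is a sorting network on $k$ wires of depth at most $6100\log k$ (with $\log$ binary, matching the convention fixed in Section~\ref{sec:notation}). Plugging this into the previous paragraph gives $r \le 6100\log k\cdot\lfloor k/2\rfloor \le 3050\, k\log k = C'k\log k$. Finally, by the defining property of a sorting network --- it sorts every input sequence, which in register language says precisely that it rearranges the contents so that value $i$ ends up in $R_i$ --- the sequence $c_1,\dots,c_r$ has exactly the property demanded by Theorem~\ref{lem:sorting}. (For the single small case $k=2$ one may just take the comparison $(1;2)$. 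Note also that the prose preceding the statement already records that $C':=3050$ suffices when one counts comparisons rather than depth, so in effect this step only makes that reformulation explicit.)

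I expect no genuine mathematical difficulty here: a non-adaptive sorting algorithm is by definition nothing other than a prescribed sequence of comparisons that sorts every input, so Theorem~\ref{lem:sorting} is essentially a restatement of the cited results in the notation $c=(s;t)$ introduced just before it. The only points needing attention are bookkeeping ones --- carrying the constant correctly through the depth-to-comparisons conversion (the factor $1/2$ coming from ``at most $k/2$ comparators per layer'' is exactly what turns a depth bound of $6100\log k$ into a comparison bound of $3050\, k\log k$), and checking that Paterson's bound is stated with the binary logarithm so that it can be used verbatim.
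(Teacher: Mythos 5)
Your argument is correct and is essentially the same as the paper's: the paper treats Theorem~\ref{lem:sorting} as an immediate consequence of the Ajtai--Koml\'os--Szemer\'edi/Paterson sorting-network bound, and your proof simply makes explicit the (routine) translation from a depth bound on a sorting network to a bound on the number of comparisons, via the observation that each layer contributes at most $\lfloor k/2\rfloor$ comparators which may be serialised in any order. The only thing worth double-checking before committing this to print is that Paterson's paper indeed states a depth bound of the form $6100\log k$ valid for all $k\ge 2$ (rather than only asymptotically); the paper itself cites Paterson for the constant $C'=3050$ without spelling out the depth-to-comparison conversion, so your account is consistent with it.
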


We now show how to obtain a structure within a highly connected tournament that simulates the function of a non-adaptive sorting algorithm.
Each comparison in the sorting algorithm 
%(or swap in Theorem~\ref{lem:sorting}) 
will be simulated by a `switch', which we now define.
An \emph{$(a_1,a_2)$-switch} is a digraph $D$ on $5$ distinct vertices $a_1,a_2,b,b_1,b_2$, where either
$E(D) = \{a_1b, bb_1, bb_2, a_2b_1, a_2b_2\}$ or $E(D) = \{a_2b, bb_1, bb_2, a_1b_1, a_1b_2\}$. We call $b_1$ and $b_2$ the \emph{terminal vertices} of the $(a_1,a_2)$-switch.
Note that for any permutation $\pi$ of $\{1,2 \}$, there exist vertex-disjoint paths $P_1,P_2$ of $D$ such that $P_i$ joins $a_i$ to $b_{\pi(i)}$ for $i=1,2$. 

\begin{prop}\label{pr:switch}
Let $T$ be a tournament. Given distinct vertices $a_1,a_2 \in V(T)$, if $d^+_T(a_1), d^+_T(a_2) \geq 7$, then $T$ contains an $(a_1,a_2)$-switch.
\end{prop}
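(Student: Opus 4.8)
The plan is to search for a switch of the first type, with edge set $\{a_1b,bb_1,bb_2,a_2b_1,a_2b_2\}$, and --- if that fails --- for one of the second type, deriving a contradiction by a double-counting argument if neither exists. Write $A_1:=N^+_T(a_1)\setminus\{a_2\}$ and $A_2:=N^+_T(a_2)\setminus\{a_1\}$; since $d^+_T(a_1),d^+_T(a_2)\ge 7$ we have $|A_1|,|A_2|\ge 6$, and note that $a_1,a_2\notin A_1\cup A_2$. To obtain a switch of the first type it suffices to find $b\in A_1$ together with two distinct vertices $b_1,b_2\in A_2\setminus\{b\}$ with $b\to b_1$ and $b\to b_2$ in $T$: then $a_1,a_2,b,b_1,b_2$ are five distinct vertices and $T$ contains all five edges $a_1b$, $bb_1$, $bb_2$, $a_2b_1$, $a_2b_2$. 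Symmetrically, to obtain a switch of the second type it suffices to find $b\in A_2$ with two out-neighbours in $A_1\setminus\{b\}$. So suppose, for a contradiction, that neither configuration exists; equivalently, every $b\in A_1$ has at most one out-neighbour in $A_2\setminus\{b\}$, and every $b\in A_2$ has at most one out-neighbour in $A_1\setminus\{b\}$.

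First I would bound $s:=|A_1\cap A_2|$. Applying the first property to a vertex $b\in A_1\cap A_2$ (which lies in $A_1$) shows that $b$ has at most one out-neighbour in $(A_1\cap A_2)\setminus\{b\}$, i.e.\ out-degree at most $1$ in the tournament $T[A_1\cap A_2]$. Summing over $b\in A_1\cap A_2$ gives $\binom{s}{2}=e_T(A_1\cap A_2)\le s$, and hence $s\le 3$.

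Next I would double count the edges of $T$ between the two disjoint sets $A_1\setminus A_2$ and $A_2$. On the one hand there are exactly $|A_1\setminus A_2|\cdot|A_2|=(|A_1|-s)\,|A_2|$ of them. On the other hand, every $b\in A_1\setminus A_2\subseteq A_1$ has at most one out-neighbour in $A_2=A_2\setminus\{b\}$, so $e_T(A_1\setminus A_2,A_2)\le|A_1\setminus A_2|$; and every $b\in A_2$ has at most one out-neighbour in $A_1\setminus A_2\subseteq A_1\setminus\{b\}$, so $e_T(A_2,A_1\setminus A_2)\le|A_2|$. Therefore $(|A_1|-s)\,|A_2|\le(|A_1|-s)+|A_2|$, that is $(|A_1|-s-1)(|A_2|-1)\le1$. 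But $|A_1|-s-1\ge 6-3-1=2$ and $|A_2|-1\ge5$, so the left-hand side is at least $10$, a contradiction. Hence one of the two configurations does occur, and $T$ contains an $(a_1,a_2)$-switch. There is no substantial obstacle here: the only points needing care are that the five vertices $a_1,a_2,b,b_1,b_2$ are pairwise distinct (which is why $a_2$ is removed from $N^+_T(a_1)$ and vice versa, and why we insist $b_1,b_2\ne b$), and that the edges between $A_1\setminus A_2$ and $A_2$ are split correctly into the two directed counts governed by the two assumed properties.
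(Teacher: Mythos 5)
Your proof is correct, but it takes a genuinely different route from the paper's. The paper first shrinks the out-neighbourhoods to \emph{disjoint} sets $A_1'\subseteq N^+(a_1)\setminus\{a_2\}$ and $A_2'\subseteq N^+(a_2)\setminus\{a_1\}$ of size exactly $3$ (possible since both out-neighbourhoods have at least $6$ vertices), and then observes that the bipartite tournament between $A_1'$ and $A_2'$ has $9$ edges on $6$ vertices, so by pigeonhole some vertex has at least $2$ out-neighbours across the bipartition; that vertex together with $a_1,a_2$ and those two out-neighbours yields the switch. Your argument instead keeps the full, possibly overlapping, out-neighbourhoods, supposes for contradiction that no switch of either type exists, bounds the overlap $s=|A_1\cap A_2|\le 3$ via the out-degree condition inside $T[A_1\cap A_2]$, and then double-counts the $(|A_1|-s)|A_2|$ tournament edges between $A_1\setminus A_2$ and $A_2$ to get $(|A_1|-s-1)(|A_2|-1)\le 1$, contradicting $|A_1|,|A_2|\ge 6$. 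Your distinctness checks (removing $a_2$ from $N^+(a_1)$ and vice versa, insisting $b_1,b_2\ne b$) are handled correctly. The paper's shrinkage-plus-pigeonhole is shorter and avoids dealing with the overlap at all; your version is somewhat longer but equally valid, and makes more explicit which structural constraints fail when no switch exists.
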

\begin{proof}
We may choose disjoint sets $A_1 \subseteq N^+_T(a_1) \setminus \{ a_2 \}$ and $A_2 \subseteq N^+_T(a_2) \setminus \{ a_1 \}$
with $|A_1|=|A_2|=3$. Consider the bipartite digraph $H$ induced by $T$ between $A_1$ and $A_2$. It is easy to check that
there exists $b \in A_1 \cup A_2$ with $d^+_H(b) \geq 2$. Let $b_1$ and $b_2$ be two out-neighbours of $b$ in $H$.
Now the vertices $a_1,a_2,b,b_1,b_2$ with suitably chosen edges from $T$ form an $(a_1,a_2)$-switch (with terminal vertices $b_1$ and $b_2$).
\end{proof}

Given $k \in \mathbb{N}$, we write $S_k$ for the set of permutations of $[k]$ and $id_k$ for the identity permutation of $[k]$.
The following structural lemma for tournaments is at the heart of the proof of Theorem~\ref{thm:linkedness}.
It constructs the equivalent of a sorting network in a tournament of high minimum outdegree.

\begin{lem}\label{lem:linkagestructure}
Let $C':=3050$ and $k \in \mathbb{N}$ be such that $k\ge 2$.
Let $T$ be a tournament with $\delta^+(T) \ge (3C'+5)k \log k$, and let $x_1, \ldots, x_k \in V(T)$ be distinct vertices. Then there exists a digraph $D \subseteq T$
 and distinct vertices $z_1, \ldots, z_k \in V(D)$ with the following properties:
\begin{enumerate}
\item $x_1, \ldots, x_k \in V(D)$.
\item $|D| \leq (3C'+1)k \log k$.
\item For any $\pi \in S_k$, we can find vertex-disjoint paths $P_1, \dots, P_k$ such that $P_i$ joins $x_{\pi(i)}$ to $z_{i}$ for all $i\in [k]$.
\end{enumerate}
\end{lem}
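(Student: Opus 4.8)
The plan is to build $D$ layer by layer, mimicking the sorting network from Theorem~\ref{lem:sorting}. Fix the sequence of comparisons $c_1,\dots,c_r$ with $r\le C'k\log k$ guaranteed by that theorem. We maintain $k$ ``active'' vertices $v_1^{(j)},\dots,v_k^{(j)}$ after processing the first $j$ comparisons, starting with $v_i^{(0)}:=x_i$. When we process comparison $c_{j+1}=(s;t)$, we use Proposition~\ref{pr:switch} to find a $(v_s^{(j)},v_t^{(j)})$-switch whose three new vertices $b,b_1,b_2$ are disjoint from everything constructed so far (and from $x_1,\dots,x_k$); we then set $v_s^{(j+1)}:=b_1$, $v_t^{(j+1)}:=b_2$, and leave the other active vertices unchanged. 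The terminal vertices $z_i$ are the final active vertices $v_i^{(r)}$. The digraph $D$ is the union of all these switches together with the original vertices $x_i$.

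The size bound is immediate: each switch contributes at most $3$ new vertices, so $|D|\le k+3r\le k+3C'k\log k\le (3C'+1)k\log k$ using $k\ge 2$ (so $\log k\ge 1$). Property~(i) holds by construction, and $z_1,\dots,z_k$ are distinct since the active vertices always stay pairwise distinct. For property~(iii), given $\pi\in S_k$ we run the sorting network ``backwards'': think of register $R_i$ as initially holding value $\pi(i)$; by Theorem~\ref{lem:sorting}, after all $r$ comparisons register $R_i$ holds value $i$. At each step, the switch property says that for whichever way the comparison $c_{j+1}=(s;t)$ sorts values $\alpha$ (in $R_s$) and $\beta$ (in $R_t$), there are vertex-disjoint paths inside the switch routing the two incoming active vertices to the two outgoing ones in the matching order. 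Concatenating these path-fragments across all $r$ layers yields, for each value $m\in[k]$, a single path from $x_{\pi^{-1}(m)}$ (the vertex carrying value $m$ at the start, i.e.\ the active vertex in the register that initially holds $m$) to $z_m$; relabelling, $P_i$ joins $x_{\pi(i)}$ to $z_i$. These paths are vertex-disjoint because at every layer the two values occupy distinct registers, hence distinct active vertices, and the interiors of distinct switches are disjoint by construction.

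The one genuine point requiring care—and the main obstacle—is verifying that Proposition~\ref{pr:switch} can always be applied, i.e.\ that the two active vertices $v_s^{(j)},v_t^{(j)}$ still have out-degree at least $7$ \emph{in the part of $T$ not yet used by $D$}. Here is where the hypothesis $\delta^+(T)\ge(3C'+5)k\log k$ enters: when we look for a $(v_s^{(j)},v_t^{(j)})$-switch we must choose out-neighbours avoiding the $|D|\le(3C'+1)k\log k$ vertices already placed (plus the other $k$ active vertices and constantly many book-keeping vertices), so we need $\delta^+(T)$ to exceed $(3C'+1)k\log k$ by a comfortable margin. Concretely, following the proof of Proposition~\ref{pr:switch}, it suffices that each of $v_s^{(j)},v_t^{(j)}$ has at least $3$ out-neighbours outside the forbidden set; since $(3C'+5)k\log k - (3C'+1)k\log k = 4k\log k\ge 8$, this is comfortably satisfied. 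I would state this degree bookkeeping as a short inductive invariant (``at stage $j$, every active vertex has $\ge (3C'+5)k\log k - |D_j|$ usable out-neighbours'', where $D_j$ is the partial digraph) and check it propagates through each switch insertion, which is the only slightly fiddly calculation. Everything else is the straightforward translation between sorting-network dynamics and path concatenation.
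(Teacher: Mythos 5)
Your proposal is correct and essentially identical to the paper's proof: both construct $D$ by processing the comparisons of the AKS sorting network one at a time, inserting an $(a_1,a_2)$-switch for each comparison via Proposition~\ref{pr:switch}, tracking a set of $k$ active/final vertices, and establishing property~(iii) by tracing value trajectories through the layers; the degree bookkeeping you describe is exactly the inductive bound $\delta^+(T-(V(D_{q-1})\setminus\{z_s,z_t\}))\ge(3C'+5)k\log k-|D_{q-1}|\ge 7$ used in the paper. There is a trivial $\pi$-versus-$\pi^{-1}$ indexing slip in your verification of~(iii) (fixed by taking $R_i$ to initially hold value $\pi^{-1}(i)$ rather than $\pi(i)$), and you state the requirement of Proposition~\ref{pr:switch} as ``$3$ out-neighbours'' when the hypothesis is $d^+\ge 7$, but neither affects the argument since your computed margin $4k\log k\ge 8>7$ is ample.
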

\begin{proof}
Consider the sorting problem for $k$ registers, and apply Theorem~\ref{lem:sorting} to obtain a sequence
$c_1, \ldots, c_r$ of $r \le C'k \log k$ comparisons such that for any $\pi \in S_k$, if value $i$ is
initially assigned to register $R_{\pi(i)}$, then applying the comparisons $c_1, \ldots, c_r$ results in every
value being assigned to its corresponding register. Given $\pi \in S_k$, we write $\pi_q \in S_k$ for the permutation
such that after applying the first $q$ comparisons $c_1,\dots,c_q$, value $i$ is assigned to register $R_{\pi_q(i)}$ for all $i$; 
thus $\pi_r = id_k$.

Let $D_0$ be the digraph with vertex set $\{x_1, \ldots, x_k \}$ and empty edge set.
We inductively construct digraphs $D_0 \subseteq D_1 \subseteq \dots \subseteq D_r \subseteq T$ and for each $D_q$
we maintain a set $Z_q = \{z_1^q, \ldots, z_k^q\}$ of $k$ distinct \emph{final vertices} such that the following holds:
\begin{itemize}
\item[(a)] $|D_q| = 3q + k$.
\item[(b)] Whenever $\pi \in S_k$ is a permutation,
%for which we can write $id_k = (\prod_{j \in A} \tau_j)\circ \pi  $ for some $A \subseteq [q]$,
there exist vertex-disjoint paths $P_1^q, \ldots, P_k^q$ in $D_q$ such that $P_i^q$ joins $x_{\pi(i)}$ to $z^q_{\pi_q(i)}$ for all $i\in [k]$.
\end{itemize}

Assuming the above statement holds for $q=0, \ldots, r$, then taking $D:=D_r$ with $z_i := z_i^r$ for all $i\in [k]$ proves the lemma.
Indeed $|D_r| = 3r + k \le 3C'k \log k + k \leq (3C'+1)k \log k$ and $\pi_r = id_k$.
%Note also that when $q=r$, every permutation $\pi \in S_k$ can be written in the form given in (b), by our choice of $\tau_i$.
%Therefore 

Having already defined $D_0$, let us describe the inductive step of our construction. 
Suppose that for some $q\in [r]$ we have constructed $D_{q-1} \subseteq T$ and a corresponding set
$Z_{q-1} = \{ z_1^{q-1}, \ldots, z_k^{q-1} \}$ of final vertices.
Let $s,t\in [k]$ with $s<t$ be such that $c_q = (s;t)$. Define the tournament $T' := T - (V(D_{q-1}) \setminus \{z^{q-1}_s, z^{q-1}_t \} )$. 
Then $T'$ has minimum out-degree at least
\begin{align*}
(3C'+5)k \log k - |D_{q-1}| %&= (3C'+5)k \log k - 3(q-1) - k  &
\geq (3C'+5)k\log k - 3r - k \geq 5k\log k - k \geq 7, 
\end{align*} 
and so in particular $d^+_{T'}(z_s^{q-1}), d^+_{T'}(z_t^{q-1}) \geq 7$. Thus we may apply Proposition~\ref{pr:switch} to obtain a
$(z_s^{q-1}, z_t^{q-1})$-switch $\sigma$ in $T'$. Write $b_1, b_2$ for the terminal vertices of $\sigma$.
Now $D_q$ is constructed from $D_{q-1}$ by adding the vertices and edges of $\sigma$ to $D_{q-1}$; note that
$z_s^{q-1}$ and $z_t^{q-1}$ are precisely the common vertices of $D_{q-1}$ and $\sigma$. 
We define the set $Z_q = \{z_1^q, \ldots, z_k^q \}$ by setting $z_i^q := z_i^{q-1}$  for all $i \neq s,t$ and $z_s^q := b_1$ 
as well as $z_t^q := b_2$.
Note that $z_1^q, \ldots, z_k^q $ are distinct.

Finally we check that conditions (a) and (b) hold for $D_q$. Condition (a) holds since $D_q$ has exactly $3$ more vertices than $D_{q-1}$.
For (b), 
%consider any permutation $\pi$ for which we can write $id_k = (\prod_{j \in A} \tau_j)\circ \pi$ for some
%$A \subseteq [q]$. Let $A' := A \cap [q-1]$ and choose $\pi'$ such that $id_k = (\prod_{j \in A'} \tau_j)\circ \pi'$.
by induction we may assume that there are vertex-disjoint paths $P^{q-1}_1, \ldots, P^{q-1}_k$ in $D_{q-1}$ such that $P^{q-1}_i$ joins $x_{\pi(i)}$
to $z_{\pi_{q-1}(i)}^{q-1}$ for all~$i\in [k]$. 
Choose vertex-disjoint paths $Q_s$ and $Q_t$ in $\sigma$ such that
\begin{itemize}
\item if $c_q$ swaps values in registers $R_s$ and $R_t$,
%(i.e. $\pi^{-1}_{q-1}(s) > \pi^{-1}_{q-1}(t)$)
then $Q_s$ joins $z_s^{q-1}$ to $z_t^{q}$ and $Q_t$ joins $z_t^{q-1}$ to $z_s^{q}$;
\item if $c_q$ does not swap values in registers $R_s$ and $R_t$,
%(i.e. $\pi^{-1}_{q-1}(s) < \pi^{-1}_{q-1}(t)$)
then $Q_s$ joins $z_s^{q-1}$ to $z_s^{q}$ and $Q_t$ joins $z_t^{q-1}$ to $z_t^{q}$.
\end{itemize}
Now exactly two of the paths from $P^{q-1}_1, \ldots, P^{q-1}_k$ end at $z_s^{q-1}$ and $z_t^{q-1}$,
namely those indexed by $\pi^{-1}_{q-1}(s)$ and $\pi^{-1}_{q-1}(t)$. We extend these two paths
using $Q_s$ and $Q_t$, and leave all others unchanged to obtain paths $P^q_1, \ldots, P^q_k$. 
It is straightforward to check that these paths are vertex-disjoint and that $P_i$ joins $x_{\pi(i)}$ to $z_{\pi_q(i)}^q$ for all $i\in [k]$.
\end{proof}

It is now an easy step to prove Theorem~\ref{thm:linkedness}.
We will use the following directed version of Menger's Theorem.  

\begin{thm}[Menger's Theorem] \label{th:Menger}
Suppose $D$ is a strongly $k$-connected digraph with $A,B \subseteq V(D)$ and $|A|, |B| \geq k$. Then there exist $k$ vertex-disjoint paths in $D$
each starting in $A$ and ending in $B$.%
\end{thm}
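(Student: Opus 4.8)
The plan is to deduce this ``set version'' of Menger's theorem from the classical internally-disjoint-paths version for a single pair of vertices. First I would form an auxiliary digraph $D'$ from $D$ by adding two new vertices $a$ and $b$, together with an edge from $a$ to every vertex of $A$ and an edge from every vertex of $B$ to $b$ (but no edge between $a$ and $b$). There is then a natural correspondence between families of $k$ vertex-disjoint $A$-$B$ paths in $D$ and families of $k$ internally vertex-disjoint $a$-$b$ paths in $D'$: prepend $a$ and append $b$ in one direction, delete $a$ and $b$ in the other. (A trivial, single-vertex $A$-$B$ path at some $v\in A\cap B$ corresponds to the path $avb$; this is essentially the only point where one must be slightly careful.) So it suffices to produce $k$ internally vertex-disjoint $a$-$b$ paths in $D'$.

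Since $a$ and $b$ are non-adjacent in $D'$, the classical vertex version of Menger's theorem (a standard result; see e.g.~\cite{digraphsbook}) applies: the maximum number of internally vertex-disjoint $a$-$b$ paths in $D'$ equals the minimum size of a set $S\subseteq V(D')\setminus\{a,b\}=V(D)$ for which $D'-S$ contains no $a$-$b$ path. Hence the whole theorem reduces to showing that every such separating set $S$ satisfies $|S|\ge k$.

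The key step is exactly this lower bound on $|S|$, and it is where the hypothesis that $D$ is strongly $k$-connected enters. Suppose for contradiction that $S\subseteq V(D)$ with $|S|\le k-1$ separates $a$ from $b$ in $D'$. Since $|A|,|B|\ge k>|S|$, we may choose $v\in A\setminus S$ and $w\in B\setminus S$, and $D'-S$ still contains the edges $av$ and $wb$. Because $D$ is strongly $k$-connected and $|S|\le k-1$, the digraph $D-S$ is strongly connected and non-empty (the latter since $|D|>k$), so it contains a path from $v$ to $w$ (trivial if $v=w$). Concatenating $av\ldots wb$ then yields an $a$-$b$ path in $D'-S$, contradicting the choice of $S$. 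Thus $|S|\ge k$ for every $a$-$b$ separator of $D'$, and so $D'$ contains $k$ internally vertex-disjoint $a$-$b$ paths, which translate back to $k$ vertex-disjoint $A$-$B$ paths in $D$.

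I do not expect any serious obstacle: the argument is a routine gadget reduction. The only things to handle carefully are the bookkeeping for vertices of $A\cap B$ (and the trivial length-zero paths they produce), ensuring $a$ and $b$ are genuinely non-adjacent so that the single-pair Menger theorem is applicable, and checking that $D-S$ is non-empty so that ``strongly connected'' actually delivers a $v$-$w$ path.
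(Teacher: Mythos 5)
Your proof is correct. The paper states this directed, set-to-set version of Menger's theorem as a classical fact and gives no proof of its own, so there is nothing to compare against; your argument --- adding a source $a$ joined to all of $A$ and a sink $b$ receiving from all of $B$, invoking the single-pair vertex form of Menger's theorem for the non-adjacent pair $a,b$, and using strong $k$-connectivity of $D$ to show that every $a$--$b$ separator $S$ with $|S|\le k-1$ would still leave an $a$--$b$ path (via $v\in A\setminus S$, $w\in B\setminus S$ and the strong connectivity of $D-S$) --- is the standard derivation, and the details you flag (non-adjacency of $a$ and $b$, non-emptiness of $D-S$, the trivial paths arising from $A\cap B$) all check out.
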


\removelastskip\penalty55\medskip\noindent{\bf Proof of Theorem~\ref{thm:linkedness}. }
Set $C':=3050$ and $C := 3C' + 6 < 10^4$.	
We must show that, given a strongly $Ck \log k$-connected tournament $T$
and distinct vertices $x_1, \ldots, x_k, y_1, \ldots, y_k \in V(T)$, we can find vertex-disjoint paths $R_1, \ldots, R_k$ such that $R_i$ joins $x_i$
to $y_i$ for all $i\in [k]$. 

Let $X:=\{ x_1, \ldots, x_k\}$, $Y:=\{ y_1, \ldots, y_k\}$ and $T':=T-Y$. Note that $T'$ is strongly $(3C' + 5)k\log k$-connected, and
in particular $\delta^+(T'') \ge (3C'+5)k\log k$.
Thus we can apply Lemma~\ref{lem:linkagestructure} to $T'$ and $x_1, \ldots, x_k$ to obtain a digraph $D \subseteq T'$ and
vertices $z_1, \ldots, z_k \in V(D)$ satisfying properties (i)--(iii) of Lemma~\ref{lem:linkagestructure}. 
Let $Z:=\{ z_1, \ldots, z_k\}$. Since $|D| \leq (3C'+1)k \log k$, the tournament $T'': = T - (V(D) \setminus Z)$ is strongly $k$-connected.
Therefore, by Theorem~\ref{th:Menger}, there exist $k$ vertex-disjoint paths, with each path starting in $Z$ and ending in $Y$.
For each $i\in [k]$, let us assume that $P_{\pi(i)}$ is the path that joins $z_{i}$ to $y_{\pi(i)}$, where $\pi$ is some permutation of $[k]$.
By Lemma~\ref{lem:linkagestructure}, we can find vertex-disjoint paths $Q_1, \ldots, Q_k$ in $D$ such that $Q_i$ joins $x_{\pi(i)}$ to $z_{i}$. 
Then the path $R_i:=Q_{\pi^{-1}(i)}P_{\pi^{-1}(i)}$ joins $x_i$ to $y_i$ and these paths are vertex-disjoint.
\endproof

Batcher~\cite{batcher} (see also~\cite{Knuth}) gave a construction of sorting networks which is asymptotically not optimal but which gives better 
values for small~$k$. More precisely, it uses at most $2k\log ^2 k$ comparisons for $k\ge 3$.%
    \COMMENT{The bound claimed in the paper is $(p^2-p+4)2^{p-2}-1$ for $k=2^p$. If $2^{p-1} \leq k \leq 2^p$, then $p-1 \leq \log k$, 
so $(p^2-p+4)2^{p-2}-1 \leq [p(p-1) + 4]2^{p-2} \leq [(\log k +1)\log k + 4]\frac{1}{2}k \leq 2k \log^2k $, where we use that $k\ge 3$}
If we use these as a building block in the proof of Lemma~\ref{lem:linkagestructure} 
instead of the asymptotically optimal ones leading to 
Theorem~\ref{lem:sorting}, we immediately obtain the following result which improves Theorem~\ref{thm:linkedness} 
for small values of~$k$.

\begin{thm} \label{thm:small}
For all $k \in \mathbb{N}$ with $k\ge 3$, every strongly  $12k \log^2 k$-connected 
tournament is $k$-linked. 
\end{thm}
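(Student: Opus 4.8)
The plan is to mimic the proof of Theorem~\ref{thm:linkedness} verbatim, simply swapping out the asymptotically optimal sorting network coming from Theorem~\ref{lem:sorting} for Batcher's bitonic sorting network. Recall that Batcher's construction yields a non-adaptive sorting algorithm for $k$ registers using at most $2k\log^2 k$ comparisons for $k\ge 3$ (this is the analogue of Theorem~\ref{lem:sorting} with $C'k\log k$ replaced by $2k\log^2 k$). First I would state the variant of Lemma~\ref{lem:linkagestructure} obtained by feeding Batcher's algorithm into the same inductive switch-by-switch construction: given $k\ge 3$ and a tournament $T$ with $\delta^+(T)\ge (6+\epsilon)k\log^2 k$ for a suitable small slack, and distinct $x_1,\dots,x_k\in V(T)$, there is a digraph $D\subseteq T$ and distinct $z_1,\dots,z_k\in V(D)$ with $x_1,\dots,x_k\in V(D)$, $|D|\le 6k\log^2 k + k$, and the property that for every $\pi\in S_k$ there are vertex-disjoint paths $P_i$ joining $x_{\pi(i)}$ to $z_i$. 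The proof is word-for-word the proof of Lemma~\ref{lem:linkagestructure}: each of the $r\le 2k\log^2 k$ comparisons $c_q=(s;t)$ is realized by a $(z_s^{q-1},z_t^{q-1})$-switch obtained from Proposition~\ref{pr:switch}, which adds exactly $3$ new vertices; one only needs $\delta^+(T')\ge 7$ throughout, and this follows since $|D_{q-1}|\le 3r+k\le 6k\log^2 k+k$, leaving out-degree at least a constant multiple of $k\log^2 k$, which is at least $7$ for $k\ge 3$.

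Then I would repeat the proof of Theorem~\ref{thm:linkedness} with the new numerology. Given a strongly $12k\log^2 k$-connected tournament $T$ and distinct vertices $x_1,\dots,x_k,y_1,\dots,y_k$, set $X:=\{x_i\}$, $Y:=\{y_i\}$, and $T':=T-Y$, which is strongly $(12k\log^2 k - k)$-connected, hence in particular has $\delta^+(T')\ge 11k\log^2 k \ge (6+\epsilon)k\log^2 k$. Apply the Batcher version of Lemma~\ref{lem:linkagestructure} to $T'$ and $x_1,\dots,x_k$ to obtain $D\subseteq T'$ and $z_1,\dots,z_k$. Since $|D|\le 6k\log^2 k + k\le 7k\log^2 k$, the tournament $T'':=T-(V(D)\setminus Z)$ is still strongly $k$-connected (using $12k\log^2 k - 7k\log^2 k\ge k$), so Menger's theorem (Theorem~\ref{th:Menger}) gives $k$ vertex-disjoint $Z$--$Y$ paths $P_{\pi(i)}$ from $z_i$ to $y_{\pi(i)}$ for some permutation $\pi$, and property (iii) of the lemma yields vertex-disjoint $Q_i$ in $D$ from $x_{\pi(i)}$ to $z_i$; then $R_i:=Q_{\pi^{-1}(i)}P_{\pi^{-1}(i)}$ are the desired $x_i$--$y_i$ paths. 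This gives the bound $12k\log^2 k$.

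The only thing that actually needs checking — and the one mild obstacle — is the arithmetic bookkeeping: one must verify that Batcher's bound of $2k\log^2 k$ comparisons (valid for $k\ge 3$) feeds through the "$3$ vertices per switch plus $k$ initial vertices" accounting to give $|D|\le 6k\log^2 k + k$, and that all the inequalities $\delta^+\ge 7$, $T''$ strongly $k$-connected, etc., survive with the clean constant $12$ rather than $6+\epsilon$; the slack between $6$ and $12$ is comfortably enough to absorb the $+k$ terms and the connectivity lost in passing to $T'$ and $T''$ for all $k\ge 3$. Since the excerpt has already recorded the Batcher bound (in the \COMMENT{} before the statement) and the entire machinery of Lemma~\ref{lem:linkagestructure} and Theorem~\ref{thm:linkedness} is in place, the write-up is essentially a one-paragraph "repeat the argument with these numbers" proof, and I expect no genuine difficulty beyond making sure the small-$k$ cases ($k=3,4$) are not excluded by any of the floor/ceiling effects in the sorting-network bound.
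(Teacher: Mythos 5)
Your proposal is correct and is precisely the paper's intended argument: the paper states Theorem~\ref{thm:small} immediately after noting that Batcher's $2k\log^2 k$ comparison bound can be substituted for Theorem~\ref{lem:sorting} in the proof of Lemma~\ref{lem:linkagestructure}, which is exactly the substitution you carry out. Your arithmetic bookkeeping (the ``$3$ vertices per switch plus $k$'' accounting, the $\delta^+\ge 7$ check, and the connectivity lost passing to $T'$ and $T''$) all goes through with the constant $12$ for every $k\ge 3$.
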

For $k=2$, the best bound is obtained by a result of Bang-Jensen~\cite{JBconn}, who showed that every 
strongly $5$-connected semi-complete digraph is $2$-linked.%
    \COMMENT{Corollary 4.6. (every tournament is a quasitransitive digraph)} 
%By using only a single switch, we similarly obtain that every strongly $7$-connected tournament is $2$-linked.

We will now collect some simple properties of highly linked directed graphs which we will use later on. 
The first two follow straightforwardly from the definition of linkedness.
\begin{prop}
\label{prop:samelinkage}Let $k\in\mathbb{N}$. Then a digraph $D$
is $k$-linked if and only if $|D|\ge2k$ and whenever $(x_{1},y_{1}),\dots,(x_{k},y_{k})$
are ordered pairs of (not necessarily distinct) vertices of $D$,
there exist internally disjoint paths $P_{1},\dots,P_{k}$
such that $P_{i}$ joins $x_{i}$ to $y_{i}$.
\end{prop}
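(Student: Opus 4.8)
The statement is an equivalence, so I would prove both directions. The forward direction is nearly immediate: if $D$ is $k$-linked in the original sense (distinct vertices, vertex-disjoint paths) then a fortiori it satisfies the weaker-looking condition, but there is a genuine point to check because the pairs $(x_i,y_i)$ are now allowed to repeat vertices and we only ask for \emph{internally} disjoint paths rather than vertex-disjoint ones. So the real content is the reverse implication, and also making the forward implication honest.

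For the forward direction, suppose $D$ is $k$-linked and we are given arbitrary ordered pairs $(x_1,y_1),\dots,(x_k,y_k)$. I would handle the degenerate pairs first: if $x_i=y_i$ for some $i$, take $P_i$ to be the trivial (length-zero) path at that vertex, which has empty interior and hence is automatically internally disjoint from everything; then recurse on the remaining pairs (with a smaller value of $k$, noting $|D|\ge 2k$ is more than enough). For the pairs with $x_i\ne y_i$, the issue is that the $2k$ endpoints need not be distinct — e.g.\ $y_1$ could equal $x_2$. The plan is to ``split'' each such vertex: since $|D|\ge 2k$, one cannot in general create new vertices, so instead I would argue directly. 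Actually the cleanest route is: for each pair with $x_i\neq y_i$, the paths $P_i$ we seek only need distinct interiors, and a single edge $x_iy_i$ (if it exists in $D$) has empty interior. But edges need not exist, so this does not suffice either. The honest argument: use that $k$-linkedness already gives internally disjoint paths when we are careful — in fact I suspect the intended proof observes that one can reduce to the distinct case by a symmetrization/relabeling trick, or simply notes this proposition is the ``standard'' reformulation and the proof is a short unpacking. I would present it as: remove the trivial pairs, then among the nontrivial pairs, if two endpoints coincide, the corresponding paths can share only that coincident endpoint as an interior-free overlap, which is permitted; so it reduces to finding paths that are vertex-disjoint except possibly at shared prescribed endpoints, and this follows from applying the $k$-linked property to an auxiliary graph where coincident terminals are duplicated (legitimate since we may pass to $D$ itself when the duplicates are not needed, or invoke that the reformulation is for pairs of vertices of $D$ and handle coincidences by the trivial-path reduction iteratively).

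For the reverse direction, suppose $D$ satisfies the condition in the proposition, and let $x_1,\dots,x_k,y_1,\dots,y_k$ be $2k$ distinct vertices. Apply the hypothesis to the pairs $(x_i,y_i)$ to get internally disjoint paths $P_i$ from $x_i$ to $y_i$. Since all $2k$ endpoints are distinct and the interiors are pairwise disjoint, the only possible overlaps are of the form: an endpoint $x_i$ or $y_i$ lying in the interior of some $P_j$ with $j\ne i$, or an endpoint of $P_i$ coinciding with an endpoint of $P_j$ — the latter is ruled out by distinctness. To fix the former, I would use a standard shortcutting/minimality argument: choose the path system $P_1,\dots,P_k$ with $\sum_i |P_i|$ (total number of edges) minimum among all internally disjoint systems linking the required pairs. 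If some $y_j$ (say) lies in the interior of $P_i$, then the initial segment of $P_i$ up to $y_j$ is a path from $x_i$ ending at $y_j$; but we want $P_i$ to end at $y_i\neq y_j$, so this does not directly shorten $P_i$. The right move is: if a terminal $v\in\{x_1,\dots,y_k\}$ meets the interior of $P_i$, reroute $P_i$ around $v$ or replace $P_i$ by its subpath — more precisely, truncate $P_j$ (where $v$ is an endpoint of $P_j$) at the point where it would otherwise need $v$, exploiting that $v$ is an endpoint of exactly one path. This is the step I expect to be the main (though still routine) obstacle: one must verify that the minimal-length internally disjoint system is automatically vertex-disjoint, by showing any violation yields a strictly shorter valid system. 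I would carry it out by case analysis on whether the offending vertex is a head or tail and which path's interior it lands in, each time producing a shorter replacement system, contradicting minimality.

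Overall, the plan is: (1) state the trivial-path reduction for degenerate pairs; (2) forward direction by this reduction plus the observation that coincident prescribed endpoints are permitted overlaps; (3) reverse direction by taking a minimum-total-length internally disjoint linkage and showing minimality forces vertex-disjointness via a shortcutting argument. The only subtlety worth dwelling on in the write-up is the shortcutting in step (3); everything else is bookkeeping with the definitions.
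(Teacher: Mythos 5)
The paper gives no proof here, saying only that the statement ``follows straightforwardly from the definition,'' so I will assess your proposal on its own terms. The first thing to notice is that the proposition, read against the paper's formal definition of ``internally disjoint'' (namely $V(\Int(P))\cap V(\Int(P'))=\emptyset$), is actually false: take $k=2$ and let $D$ be the complete digraph on four vertices with the single edge from $1$ to $2$ removed. Every ordered pair other than $(1,2)$ is linked by an edge (empty interior), and $(1,2)$ has the two routes $132$ and $142$, so any two prescribed ordered pairs admit paths with disjoint interiors; yet for the distinct terminals $1,2,3,4$ there is no vertex-disjoint linkage, since any $1$--$2$ path already uses at least three of the four vertices. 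What the authors clearly intend --- and explicitly use later, in the proof of Lemma~\ref{lem:shortlinkagewithpaths} --- is the stronger condition $V(P_i)\cap V(P_j)\subseteq\{x_i,y_i\}\cap\{x_j,y_j\}$, i.e.\ the paths share a vertex only when it is a prescribed terminal of both. Under that reading the reverse implication is immediate: if all $2k$ terminals are distinct, no two pairs share a prescribed terminal, so ``internally disjoint'' already forces vertex-disjoint. All of the (modest) content is in the forward implication.

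Your proposal has this backwards. You treat the reverse direction as the one needing the real argument and propose a minimum-total-length shortcutting to upgrade interior-disjoint paths to vertex-disjoint ones. That cannot work: if $y_j$ lies in the interior of $P_i$, ``truncating $P_j$ at the point where it would otherwise need $v$'' produces a path that no longer ends at $y_j$, and there is no local replacement of $P_i$ or $P_j$ that is strictly shorter and keeps all prescribed endpoints; the $K_4$-minus-an-edge example makes the failure concrete, since $P_1=132$, $P_2=34$ is minimum-length with disjoint interiors but is not vertex-disjoint and no vertex-disjoint system exists at all. Meanwhile the forward direction, which is where the work lies, you leave as a sketch (``duplicate coincident terminals in an auxiliary graph'') that you do not carry out and that raises real questions: why is the auxiliary graph $k$-linked, and how do paths in it project back to paths rather than walks in $D$? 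The honest route is to first observe that $k$-linkedness and $|D|\ge 2k$ force $\delta^0(D)\ge 2k-1$; then, after setting the trivial pairs $x_i=y_i$ aside, choose $2k$ \emph{distinct} proxies $u_i\in\{x_i\}\cup N^+(x_i)$ and $w_i\in\{y_i\}\cup N^-(y_i)$ so that each distinct terminal is chosen as the proxy for exactly one of its occurrences and every other proxy lies outside the terminal set (the degree bound is exactly what makes this greedy choice go through); apply $k$-linkedness to the proxies and prepend/append single edges. Vertex-disjointness of the linkage together with the fact that substitute proxies avoid the terminal set then gives $V(P_i)\cap V(P_j)\subseteq\{x_i,y_i\}\cap\{x_j,y_j\}$.
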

\begin{prop}
\label{prop:robustlinkage}Let $k,\ell\in\mathbb{N}$ with $\ell<k$,
and let $D$ be a $k$-linked digraph. Let $X\subseteq V(D)$ and $F\subseteq E(D)$ be such that
$|X|+2|F| \le2\ell$. Then $D-X-F$ is $(k-\ell)$-linked.
\end{prop}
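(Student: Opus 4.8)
The plan is to deduce the proposition from four ``unit'' reductions, valid for any $k$-linked digraph $D$ with $k\ge 2$: \textbf{(a)} $D$ is $(k-1)$-linked; \textbf{(b)} $D-w$ is $(k-1)$-linked for any $w\in V(D)$; \textbf{(c)} $D-\{w_1,w_2\}$ is $(k-1)$-linked for any distinct $w_1,w_2\in V(D)$; \textbf{(d)} $D-e$ is $(k-1)$-linked for any $e\in E(D)$. Granting these, one deletes the $|F|$ edges of $F$ one at a time using (d), then deletes the vertices of $X$ as $\lfloor|X|/2\rfloor$ pairs using (c), plus one extra vertex using (b) if $|X|$ is odd; this shows $D-X-F$ is $(k-|F|-\lceil|X|/2\rceil)$-linked, and since $|X|+2|F|\le 2\ell$ forces $|F|+\lceil|X|/2\rceil\le\ell$ (check the parities), finitely many applications of (a) finish. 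All intermediate digraphs have linkedness $\ge k-\ell\ge 1$, and at least $2$ until the last step (at most $\ell\le k-1$ unit steps are taken), so each reduction applies; the lower bounds on the number of vertices hold throughout since $|D|\ge 2k$ and $|X|\le 2\ell$ give $|D-X-F|\ge 2(k-\ell)$.

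Before (d) I would record the one non-bookkeeping ingredient: \emph{if $D$ is $k$-linked then $\delta^+(D),\delta^-(D)\ge 2k-1$}. For the out-degree bound, suppose some $v$ has $d:=d^+(v)\le 2k-2$. Pick $y_0\in V(D)\setminus(N^+(v)\cup\{v\})$ and form a linkage request consisting of the pair $(v,y_0)$, together with $\lceil d/2\rceil\le k-1$ pairs whose endpoints collectively cover $N^+(v)$ (completing an odd pair with a spare vertex), padded with pairs of spare vertices to exactly $k$ pairs, with all $2k$ endpoints distinct --- possible since $|D|\ge 2k$. By $k$-linkedness there are vertex-disjoint paths realising these pairs; the path from $v$ leaves $v$ along an edge into $N^+(v)$, but every vertex of $N^+(v)$ is an endpoint of one of the other paths, contradicting disjointness. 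The in-degree bound follows symmetrically.

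Reductions (a)--(c) use the ``occupy a spare path'' trick. For (a), adjoin one extra pair of spare vertices to the $k-1$ given target pairs, apply $k$-linkedness, and discard the spare path. For (b), choose a spare vertex $z$ and adjoin the pair $(z,w)$: the target paths are vertex-disjoint from the $(z,w)$-path, hence avoid $w$. For (c), adjoin the pair $(w_1,w_2)$ instead, so the target paths avoid both $w_1$ and $w_2$. In each case $|D|\ge 2k$ supplies the required distinct vertices.

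The heart of the matter is (d), which is the step where real care is needed. Let $e=uv\in E(D)$ and let $(x_i,y_i)$, $i\in[k-1]$, be target pairs in $D-e$. If neither $u$ nor $v$ lies in $\{x_i,y_i\}$, adjoin the \emph{reversed} pair $(v,u)$; the target paths avoid $u$ and $v$, hence $e$. If exactly one of $u,v$, say $u$, is a target vertex, occupy the other endpoint $v$ with a spare path as in (b); the target paths avoid $v$, and therefore avoid $e$ (an $x$--$y$ path can use an edge only if it meets both of its endpoints). If both $u$ and $v$ are target vertices, apply (a) to get vertex-disjoint $x_i$--$y_i$ paths $P_1,\dots,P_{k-1}$ in $D$; no $P_l$ can traverse $e=u\to v$ unless $u=x_i$ and $v=y_i$ for the \emph{same} $i$ --- if $u=y_j$ then $P_j$ ends there and no other path meets it, if $v=x_j$ then $P_j$ starts there and no other path meets it, and if $u=x_i$, $v=y_j$ with $i\ne j$ then $P_i$ would have to contain $y_j=h(P_j)$. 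So suppose $e=x_i\to y_i$. Since $d^+_D(x_i)\ge 2k-1$, so $|N^+_{D-e}(x_i)|\ge 2k-2$, while at most $2k-4$ target vertices can be out-neighbours of $x_i$ in $D-e$ (neither $x_i$ nor $y_i$ can be), we may pick $\alpha\in N^+_{D-e}(x_i)$ that is not a target vertex; by (b), $D-x_i$ is $(k-1)$-linked, so it contains vertex-disjoint paths $R_j$ ($j\ne i$) from $x_j$ to $y_j$ and $R_i$ from $\alpha$ to $y_i$. These avoid $e$ (incident to $x_i$), and prepending the edge $x_i\to\alpha$ to $R_i$ yields an $x_i$--$y_i$ path avoiding $e$ (its first edge is $x_i\to\alpha\ne e$) and vertex-disjoint from the $R_j$, completing the family. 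Securing the rerouting vertex $\alpha$ --- which is precisely why the degree bound is needed --- is the main obstacle; everything else is routine.
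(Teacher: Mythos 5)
Your proof is correct. The paper offers no argument for this proposition, asserting only that it ``follow[s] straightforwardly from the definition of linkedness,'' so there is nothing to compare against line by line. The vertex-deletion reductions (b), (c) and the padding step (a) really are immediate from the definition; the substance is in the edge-deletion step (d), and specifically in the subcase $e=x_i\to y_i$ where both endpoints of $e$ are tied up as the two ends of the same requested path, so no ``occupy a spare path'' trick is available. Your treatment of that subcase --- proving $\delta^{+}(D)\geq 2k-1$ via the standard ``the first edge of the $(v,y_0)$-path must collide with an endpoint covering $N^{+}(v)$'' argument, then choosing a non-target $\alpha\in N^{+}_{D-e}(x_i)$ (the count $|N^{+}_{D-e}(x_i)|\geq 2k-2>2k-4$ is correct, and $\alpha\neq y_i$ ensures the prepended edge is not $e$), and rerouting through the $(k-1)$-linked $D-x_i$ --- is sound. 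The bookkeeping also checks out: when $|X|$ is odd, $|X|+2|F|$ and $2\ell$ have opposite parities, which gives $|F|+\lceil |X|/2\rceil\leq\ell\leq k-1$, so every intermediate digraph is at least $(k-\ell+1)\geq 2$-linked and the unit reductions remain applicable throughout. In short, this is a complete and correct argument for a statement the paper treats as an observation, and it correctly identifies the one non-routine corner case (an $x_i\to y_i$ edge lying in $F$) that makes the ``straightforward'' claim a little optimistic.
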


The next lemma shows that in a sufficiently highly linked digraph we can link given pairs of vertices
by vertex-disjoint paths which together do not contain too many vertices. 

\begin{lem}
\label{lem:shortlinkage}Let $k,s\in\mathbb{N}$, and let $D$
be a $2ks$-linked digraph. Let $(x_{1},y_{1}),\dots,(x_{k},y_{k})$
be ordered pairs of (not necessarily distinct) vertices in $D$. Then there exist internally disjoint
paths $P_{1},\dots,P_{k}$ such that $P_{i}$ joins $x_{i}$
to $y_{i}$ for all $i\in [k]$ and $|P_{1}\cup\dots\cup P_{k}|\le|D|/s$.\end{lem}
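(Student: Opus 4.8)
The plan is to apply the linkedness hypothesis to $2s$ disjoint copies of each of the $k$ prescribed pairs, and then, for each pair, to retain only the copy whose interior is smallest.

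In detail, since $D$ is $2ks$-linked we have $|D|\ge 4ks$, and by Proposition~\ref{prop:samelinkage} (applied with $2ks$ in place of $k$) the list of $2ks$ ordered pairs obtained by taking $(x_i,y_i)$ with multiplicity $2s$ for each $i\in[k]$ is linked by internally disjoint paths $P_i^{(j)}$ ($i\in[k]$, $j\in[2s]$), where each $P_i^{(j)}$ joins $x_i$ to $y_i$. (If $x_i=y_i$ for some $i$, that pair is better omitted from this list and assigned the trivial path $P_i:=x_i$; since $x_i$ is then among the endpoint vertices counted at the end, this makes no difference, and I ignore it below.) The key observation is that the interiors $\Int(P_i^{(j)})$ of \emph{all} $2ks$ of these paths are pairwise disjoint subsets of $V(D)$, and hence $\sum_{i\in[k]}\sum_{j\in[2s]}|\Int(P_i^{(j)})|\le|D|$.

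Now for each $i\in[k]$ I would choose $j_i\in[2s]$ minimising $|\Int(P_i^{(j)})|$ over $j\in[2s]$, and set $P_i:=P_i^{(j_i)}$. As a sub-collection of an internally disjoint family, $P_1,\dots,P_k$ are internally disjoint, and each $P_i$ joins $x_i$ to $y_i$, so only the size bound remains. Since a minimum is at most an average, $|\Int(P_i)|\le\frac1{2s}\sum_{j\in[2s]}|\Int(P_i^{(j)})|$; summing over $i$ and invoking the key observation gives $\sum_{i\in[k]}|\Int(P_i)|\le|D|/(2s)$. Finally, the vertex set of $P_1\cup\dots\cup P_k$ is contained in $\{x_1,\dots,x_k,y_1,\dots,y_k\}$ together with the (pairwise disjoint) interiors of the $P_i$, so $|P_1\cup\dots\cup P_k|\le 2k+|D|/(2s)\le|D|/s$, the last inequality using $2k\le|D|/(2s)$, which holds because $|D|\ge 4ks$.

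I do not expect a genuine obstacle here: the argument is a single application of the hypothesis followed by a pigeonhole step. The only point that needs care is the bookkeeping showing that $2s$ copies — and correspondingly the assumption of $2ks$-linkedness rather than $ks$-linkedness — is exactly the right amount: the factor $2$ splits the target $|D|/s$ into the $|D|/(2s)$ used to control the total size of the interiors and the further $|D|/(2s)$ needed to absorb the at most $2k$ forced endpoint vertices $x_1,\dots,x_k,y_1,\dots,y_k$.
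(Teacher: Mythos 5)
Your proof is correct and uses essentially the same approach as the paper's: apply Proposition~\ref{prop:samelinkage} to $2s$ copies of each pair to obtain $2ks$ internally disjoint paths, then extract a cheap subfamily by averaging and absorb the at most $2k$ endpoints using $|D|\ge 4ks$. The only cosmetic difference is that the paper selects a single common index $j\in[2s]$ minimising $|P_1^j\cup\dots\cup P_k^j|$, whereas you minimise $|\Int(P_i^{(j)})|$ over $j$ separately for each $i$; both yield the same bound.
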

\begin{proof}
By Proposition~\ref{prop:samelinkage} there exist internally disjoint
paths $P_{1}^{1},\dots,P_{k}^{2s}$ such that $P_{i}^{j}$ joins
$x_{i}$ to $y_{i}$ for all $i\in [k]$ and all $j\in [2s]$. 
%For $j \le 2s$, let $\cP_j:=\{P_{1}^{j},\dots,P_{k}^{j}\}$. 
For any $j$, the interiors of $P_{1}^{j},\dots,P_{k}^{j}$ contain at least 
$|P_{1}^{j} \cup \dots \cup P_{k}^{j}|-2k$ vertices.
So the disjointness of the paths implies that
there is a $j \in [2s]$ with $|P_{1}^{j} \cup \dots \cup P_{k}^{j}|-2k \le |D|/2s$.
The result now follows by setting $P_i:=P_{i}^{j}$ and noting that $2k \le |D|/2s$.
\end{proof}

%%%%%%%%%%%%%%%%%%%%%%%%%%%%%%%%%%%%%%%%%%%%%%%%%%%%%%%%%%%%%%%%%%%%%%%%%%%%%%%%%%

\section{Nearly extremal example\label{sec:example}}

The aim of this section is to prove the following proposition, which shows that the bound on the connectivity in Theorem~\ref{thm:thomconj} is close to best possible.

\begin{prop}\label{thm:example}
Fix $n,k \in \mathbb{N}$ with $k \ge 2$ and $n>k^2+k+2$. There exists a strongly $\lfloor k^2/4 \rfloor$-connected tournament $T$ of order $n$ 
such that if $D \subseteq T$ is a spanning $r$-regular subdigraph, then $r \leq k$. In particular, $T$ contains at most $k$ edge-disjoint Hamilton cycles. 
\end{prop}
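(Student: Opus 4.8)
The plan is to construct $T$ explicitly as follows. Take a "blow-up" of a transitive tournament on roughly $k$ vertices: partition the vertex set $V(T)$ into parts $V_1, \dots, V_m$ (with $m$ close to $k$, to be optimised), where all edges between $V_i$ and $V_j$ with $i < j$ are oriented from $V_i$ to $V_j$, and inside each $V_i$ we place an \emph{arbitrary} (or suitably chosen strongly connected) tournament. The intuition is that the only edges that can "leave" the transitive backbone are those inside the parts, so any spanning regular subdigraph is forced to route almost all of its flow through the internal edges of the parts, and a counting/flow argument across the cut between $V_1 \cup \dots \cup V_i$ and $V_{i+1} \cup \dots \cup V_m$ will bound the regularity $r$. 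To make the connectivity as large as possible one wants the parts near the "middle" of the order to be large (of size about $k/2$) while the parts near the two ends can be small; balancing these constraints is what produces the $\lfloor k^2/4 \rfloor$ bound, which matches the shape $(k/2)\cdot(k/2)$.

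First I would pin down the exact construction and sizes: I expect one wants two parts of size $\lfloor k/2 \rfloor$ and $\lceil k/2 \rceil$ at (or near) the front and back of the transitive order respectively, with the remaining $n - k$ (or so) vertices distributed in the "interior" in a way that keeps every cut large, plus the internal tournaments chosen so that $T - S$ stays strongly connected for every $S$ of size $< \lfloor k^2/4 \rfloor$; this connectivity check is a routine but slightly fiddly Menger-type argument (the condition $n > k^2 + k + 2$ is there precisely to guarantee enough vertices to absorb deletions and keep things connected). Second, for the regularity bound: let $D \subseteq T$ be spanning and $r$-regular, and consider the cut $(U, \overline{U})$ where $U = V_1 \cup \dots \cup V_i$ for some index $i$. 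Since $D$ is $r$-regular, the number of $D$-edges from $\overline{U}$ to $U$ equals $r|U| - e_D(U) $ compared with $r|U| - (\text{edges within }U)$; balancing in-degrees and out-degrees inside $U$ forces $e_D(\overline U, U) = e_D(U, \overline U)$, and in $T$ \emph{all} edges across this cut go from $U$ to $\overline U$. Hence $e_D(\overline U, U) = 0$, so $D[U]$ is itself $r$-in-regular — wait, more carefully: every vertex of $U$ must get all $r$ of its in-edges from within $U$ and (for $i$ maximal) this propagates. The clean way is: for the \emph{first} part $V_1$, every vertex $v \in V_1$ has $N_T^-(v) \subseteq V_1$, so $d_D^-(v) \le |V_1| - 1$, forcing $r \le |V_1| - 1$; symmetrically $r \le |V_m| - 1$ using out-degrees in the last part. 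So we just need $|V_1|$ and $|V_m|$ to be about $k+1$ — but then the cut between them has size only about $k^2$...

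The main obstacle — and the step I expect to require the real care — is getting the connectivity up to $\lfloor k^2/4 \rfloor$ \emph{simultaneously} with forcing $r \le k$. The resolution is presumably a more refined construction than a single transitive blow-up: one likely uses a "front block" $A$ and a "back block" $B$ each of size about $k/2$ together with a large strongly-connected "middle" $M$ on the remaining $n - |A| - |B|$ vertices, oriented so that $A \Rightarrow M \Rightarrow B$ and $A \Rightarrow B$, with $A$ and $B$ internally transitive. Then a vertex $v$ in the "source" of $A$ has all in-neighbours in $A$, giving $r \le |A| - 1 \approx k/2$ — still not $k$. So in fact the right picture is that the obstruction to $r$-regularity comes from a \emph{bottleneck cut} whose two sides are joined by edges only in one direction except through a small set, and $\lfloor k^2/4 \rfloor$ arises as $|A|\cdot|B|$ with $|A| \approx |B| \approx k/2$ being the number of "backward" edges one is allowed to add to restore connectivity while still keeping any spanning regular subdigraph's regularity at most $k$ (since a regular subdigraph needs to push $r|A|$ units of flow back across the cut, and there are only $|A||B|$ backward edges available, forcing $r|A| \le |A||B|$, i.e. $r \le |B| \approx k/2$ — again off by a factor $2$). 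I would therefore start from the desired inequality $r \le k$ and the desired connectivity $\lfloor k^2/4\rfloor$ and reverse-engineer the cut structure: a bipartition-like gadget where regularity is throttled by a forward/backward edge imbalance of exactly $\lfloor k^2/4\rfloor$ and connectivity is exactly that imbalance, the extra factor being accounted for by the fact that a Hamilton cycle (unlike an arbitrary regular digraph) uses each such cut \emph{once}, so "$k$ edge-disjoint Hamilton cycles" needs $2k$ cut-edges per ... in any case, the technical heart is this single cut computation, and the rest (embedding in a tournament of order $n$, checking strong $\lfloor k^2/4\rfloor$-connectivity via Menger, and the final sentence that $k$ edge-disjoint Hamilton cycles form a spanning $2k$-regular — hence here actually $k$-regular after the count — subdigraph) is bookkeeping.
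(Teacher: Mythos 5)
There is a genuine gap: you correctly identify the broad shape (three blocks, a transitive backbone, a cut-counting argument), but the linear flow computation you propose cannot simultaneously deliver connectivity $\Theta(k^2)$ and regularity bound $O(k)$, and you acknowledge as much without resolving it. The issue is not a stray factor of $2$ but a fundamentally linear-versus-quadratic mismatch. With end-blocks $A,B$ of size $\approx k/2$ as you suggest, the connectivity of the whole tournament is at most $\min(|A|,|B|) \approx k/2$ (delete all of the smaller end-block), which is nowhere near $\lfloor k^2/4\rfloor$; and your cut inequality $r|A| \le |A||B|$ gives $r \lesssim k/2$ only after ignoring the $\binom{|A|}{2}$ internal edges of $A$, so it is both too weak and aimed at the wrong parameter regime.

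The paper's construction makes two moves you do not anticipate. First, the end-blocks $A_\ell$ and $B_\ell$ are not small transitive pieces but \emph{rotational} (regular) tournaments on $2\ell+1 = \Theta(k^2)$ vertices; this is precisely to avoid the obstruction you noticed, that the source of a transitive block would have all in-neighbours inside the block and force $r \le |A|-1$. Connectivity $\ell = \lfloor k^2/4\rfloor$ then comes from the rotational structure (one can always route clockwise around up to $\ell-1$ deletions) together with a perfect matching of $2\ell+1$ backward edges from $B_\ell$ to $A_\ell$, at least one of which survives any deletion of $\ell-1$ vertices. Second, the transitive part $C_m$ sits in the \emph{middle}, after $A_\ell$ in the orientation, and the regularity bound is obtained by counting in-edges of the first $r$ vertices $c_1,\dots,c_r$ of $C_m$: vertex $c_i$ can receive at most $i-1$ in-edges from within $C_m$, so $D$ must send at least $r-i+1$ edges from $A_\ell$ to $c_i$, and summing gives $e_D(A_\ell,\bar A_\ell) \ge \binom{r+1}{2}$. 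Since regularity forces $e_D(A_\ell,\bar A_\ell) = e_D(\bar A_\ell,A_\ell) \le 2\ell+1$ (only the matching edges go backward), one gets $\binom{r+1}{2} \le 2\ell+1$, i.e.\ $r \le \sqrt{4\ell} \le k$. This \emph{quadratic} squeeze $\binom{r+1}{2} \le O(\ell)$ is the key idea your outline lacks, and it is what allows $r = O(k)$ to coexist with connectivity $\Theta(k^2)$; no linear count across a single cut can reproduce it. Your closing remark about $k$ edge-disjoint Hamilton cycles is essentially right once untangled: such a collection is a spanning $k$-regular subdigraph (in-degree and out-degree both $k$), so the bound $r \le k$ caps the number at $k$.
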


It is easy to see that the above tournament $T$ is also $\Omega(k^2)$-linked.
This shows that the bound in Theorem~\ref{thm:mainresult} has to be at least quadratic in~$k$.
\begin{proof}
Let $\ell\in \mathbb{N}$. We will first describe a tournament $T_{\ell}=(V_{\ell},E_{\ell})$ of order $2\ell+1$ which is strongly $\ell$-connected. We then use
$T_\ell$ as a building block to construct a tournament as desired in the proposition. 

Let $V_{\ell} := \{v_0, \ldots, v_{2\ell} \}$ and let $E_{\ell}$ consist of the edges $v_iv_{i+t}$  for all $i=0, \ldots, 2\ell$ and all $t\in [\ell]$,
where indices are understood to be modulo $2\ell+1$. One may think of $T_{\ell}$ as the tournament with vertices $v_0 ,\ldots, v_{2\ell}$ placed in order, clockwise,
around a circle, where the out-neighbours of each $v_i$ are the $\ell$ closest vertices to $v_i$ in the clockwise direction, and the in-neighbours 
are the $\ell$ closest vertices in the anticlockwise direction. Note that $T_{\ell}$ is regular. 
Note also that, for any distinct $x,y \in V_{\ell}$, we can find a path in $T_{\ell}$ from $x$ to $y$ by traversing vertices from $x$ to $y$ in clockwise order;
this remains true even if we delete any $\ell - 1$ vertices from $T_{\ell}$. 
%Thus $T_{\ell}$ is  strongly $\ell$-connected.
%To see the latter, observe that if $S \subseteq V_{\ell}$ with $|S| \leq l-1$, then $T_{\ell} - S$ is connected; indeed for 
%any $x,y \in V_{\ell} \setminus S$, moving clockwise from $x$ to $y$ in $T_{\ell}-S$ gives a path from $x$ to $y$. 

Next we construct a tournament $T_{m,\ell} = (V_{m,\ell}, E_{m,\ell})$ as follows. We take $V_{m,\ell}$ to be the disjoint
union of sets $A_{\ell} := \{a_0, \ldots, a_{2\ell}\}$, $B_{\ell} := \{b_0, \ldots, b_{2\ell}\}$, and $C_m := 
\{c_1, \ldots, c_m\}$. The edges of $T_{m,\ell}$ are defined as follows: $T_{m,\ell}[A_{\ell}]$ and $T_{m,\ell}[B_{\ell}]$ 
are isomorphic to $T_{\ell}$ (with the natural labelling of vertices), and $T[C_m]$ is a transitive 
tournament which respects the given order of the vertices in $C_m$
(i.e.~$c_ic_j$ is an edge if and only if $i<j$). Each vertex in $A_{\ell}$ is an in-neighbour of all 
vertices in $C_m$, and each vertex in $B_{\ell}$ is an out-neighbour of all vertices in~$C_m$. 
Finally, a vertex $a_i \in A_{\ell}$ is an in-neighbour of a vertex $b_j \in B_{\ell}$
if and only if $i \not= j$. Note that $|T_{m,\ell}|=m+4\ell+2$.
\medskip

\noindent
\textbf{Claim~1.} \emph{The tournament $T_{m,\ell}$ is strongly $\ell$-connected.}

\smallskip

\noindent
To see that $T_{m,\ell}$ is  strongly $\ell$-connected, we check that if $S \subseteq V_{m, \ell}$ with $|S| \leq \ell - 1$,
then $T_{m,\ell} - S$ is  strongly connected. Write $A_{\ell}'$, $B_{\ell}'$ and $C_m'$ respectively for $A_{\ell} \setminus S$, $B_{\ell} \setminus S$,
and $C_m \setminus S$. Note that there is at least one edge of $T_{m,\ell} - S$ from $B_{\ell}'$ to $A_{\ell}'$, which we may assume by symmetry to be $b_0a_0$.
Ordering the vertices of $T_{m,\ell}$ as $a_0, \ldots, a_{2\ell}, c_1, \ldots, c_{m}, b_1, \ldots, b_{2\ell}, b_0$
and removing the vertices of $S$ from this ordering gives a Hamilton cycle in $T_{m,\ell} - S$. Thus $T_{m,\ell} - S$ must be  strongly connected.
This completes the proof of Claim~1.

\medskip

\noindent
\textbf{Claim~2.} \emph{Let $m,\ell \in \mathbb{N}$ be such that $m > \sqrt{4\ell}$. 
Then for every $r$-regular spanning subdigraph $D \subseteq T_{m,\ell}$ we have $r \leq \sqrt{4\ell}$.}

\smallskip

\noindent
Suppose for a contradiction that $D \subseteq T_{m,\ell}$ is an $r$-regular spanning subdigraph with
$r:= \lfloor \sqrt{4\ell} \rfloor +1 > \sqrt{4\ell}$. 
Since $D$ is regular, we have $e_D(A_{\ell}, \bar{A}_{\ell}) = e_D(\bar{A_{\ell}}, A_{\ell})$, where $\bar{A_{\ell}}:=V(D) \setminus A_{\ell}$.
Noting that $r \leq m$, consider the first $r$ vertices $c_1, \ldots, c_r$ of $C_m$. Since 
$N_D^-(c_i) \subseteq N^-_{T_{m,\ell}}(c_i) = A_{\ell} \cup \{c_1, \ldots, c_{i-1} \}$ and $|N_D^-(c_i)|=r$, we have
$|N_D^-(c_i) \cap A_{\ell}| \geq r -i +1$, so that $e_D(A_{\ell}, \{c_i\}) \geq r-i+1$. Thus 
\[
e_D(\bar{A}_{\ell},A_{\ell}) = e_D(A_{\ell}, \bar{A}_{\ell}) \geq e(A_{\ell}, \{ c_1, \ldots, c_r \}) \geq r+ \cdots + 1 = \binom{r+1}{2}.
\] 
But $e_D(\bar{A}_{\ell},A_{\ell}) \leq e_{T_{m,\ell}}(\bar{A}_{\ell},A_{\ell}) = 2\ell+1$, so $\binom{r+1}{2} \leq 2\ell+1$.
This is easily seen to contradict $r > \sqrt{4\ell}$ for all $\ell \in \mathbb{N}$. This completes the proof of Claim~2.

\medskip

\noindent
To prove the proposition, we set $\ell := \lfloor k^2/4 \rfloor$ and $m := n - 4\ell - 2$, and take $T$ to be $T_{m,\ell}$.
Thus $|T| = |T_{m,\ell}| = m + 4\ell + 2 = n$. By Claim~1, $T$ is strongly $\lfloor k^2/4 \rfloor$-connected.
 Since $n > k^2 + k + 2 \geq 4\ell + \sqrt{4\ell} +2$, we have $m > \sqrt{4\ell}$, so
Claim~2 implies that if $D \subseteq T = T_{m,\ell}$ is a spanning $r$-regular subdigraph, then $r \leq \sqrt{4\ell} \leq k$.
\end{proof}

%%%%%%%%%%%%%%%%%%%%%%%%%%%%%%%%%%%%%%%%%%%%%%%%%%%%%%%%%%%%%%%%%%%%%%%%%%%%%%%%%%%%%%%%%%%%%%%%%%%%%%%%%%%%%%%%

\section{Finding a single Hamilton cycle in suitable oriented graphs}\label{sec:onecycle}

We first state two simple, well-known facts concerning the degree
sequences of tournaments.
\begin{prop}
\label{prop:largedegreevertex}Let $T$ be a tournament on $n$ vertices.
Then $T$ contains at least one vertex of in-degree at most $n/2$,
and at least one vertex of out-degree at most $n/2$.
\end{prop}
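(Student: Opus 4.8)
The statement to prove is Proposition~\ref{prop:largedegreevertex}: every tournament $T$ on $n$ vertices contains at least one vertex of in-degree at most $n/2$ and at least one vertex of out-degree at most $n/2$. This is an averaging argument.

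\medskip

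The plan is to use the fact that in a tournament on $n$ vertices, every pair of distinct vertices contributes exactly one edge, so $e(T) = \binom{n}{2}$. First I would observe that $\sum_{v \in V(T)} d^+_T(v) = e(T) = \binom{n}{2}$, since each edge contributes exactly $1$ to the out-degree of its tail. Hence the average out-degree is $\binom{n}{2}/n = (n-1)/2 < n/2$. Since the average is strictly less than $n/2$, there must exist at least one vertex $v$ with $d^+_T(v) \le (n-1)/2 \le n/2$. The argument for in-degrees is identical: $\sum_{v \in V(T)} d^-_T(v) = e(T) = \binom{n}{2}$ as well (each edge contributes $1$ to the in-degree of its head), so again the average in-degree is $(n-1)/2$, and some vertex has in-degree at most $n/2$.

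\medskip

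There is really no obstacle here — this is a textbook counting fact — so the only thing to be careful about is making the averaging step explicit: if every vertex had out-degree strictly greater than $n/2$, then the sum of out-degrees would exceed $n \cdot (n/2) = n^2/2 > \binom{n}{2}$ (for $n \ge 1$), contradicting $\sum_v d^+_T(v) = \binom{n}{2}$. I would phrase it this way (by contradiction) or equivalently via the pigeonhole principle on the average. One can in fact get the slightly stronger bound $(n-1)/2$, but since the statement only asks for $n/2$ I would just prove what is claimed, noting the improvement in passing if desired.
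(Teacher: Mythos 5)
Your argument is correct and is the standard averaging argument for this fact; the paper itself states Proposition~\ref{prop:largedegreevertex} without proof as a well-known fact, so there is no alternative approach to compare against. Your observation that one actually gets the sharper bound $(n-1)/2$ is accurate, though unneeded here.
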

\begin{prop}
\label{prop:degseqbound}Let $T$ be a tournament on $n$ vertices
and let $d \geq 0$. Then $T$ has at most $2d+1$ vertices
of in-degree at most $d$, and at most $2d+1$ vertices of out-degree
at most $d$.
\end{prop}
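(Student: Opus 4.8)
The statement to prove is Proposition~\ref{prop:degseqbound}: in a tournament $T$ on $n$ vertices, for any $d \geq 0$, at most $2d+1$ vertices have in-degree at most $d$ (and dually for out-degree).

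\medskip

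The plan is to argue by contradiction via an edge-counting (handshaking-type) argument restricted to the set of low-degree vertices. Suppose for contradiction that there is a set $W \subseteq V(T)$ with $|W| = 2d+2$ such that every vertex $w \in W$ satisfies $d^-_T(w) \le d$. First I would restrict attention to the sub-tournament $T[W]$ induced on $W$. Since $T[W]$ is itself a tournament on $2d+2$ vertices, the number of edges inside it is exactly $\binom{2d+2}{2} = (2d+2)(2d+1)/2 = (d+1)(2d+1)$. On the other hand, each edge of $T[W]$ contributes $1$ to the in-degree (within $T[W]$, hence within $T$) of its head, so $\sum_{w \in W} d^-_{T[W]}(w) = (d+1)(2d+1)$. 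By pigeonhole, some vertex $w^* \in W$ has $d^-_{T[W]}(w^*) \ge (d+1)(2d+1)/(2d+2) = (2d+1)/2 > d$, hence $d^-_{T[W]}(w^*) \ge d+1$. But $d^-_T(w^*) \ge d^-_{T[W]}(w^*) \ge d+1 > d$, contradicting $w^* \in W$. Therefore no such $W$ exists, so at most $2d+1$ vertices of $T$ have in-degree at most $d$. The statement for out-degrees follows by applying the in-degree statement to the reverse tournament $\overleftarrow{T}$ obtained by reversing every edge of $T$ (a tournament again), since in-degrees in $\overleftarrow{T}$ are out-degrees in $T$.

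\medskip

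There is no real obstacle here — the only point requiring a little care is making sure the inequality $(2d+1)/2 > d$ is strict so that it forces $d^-_{T[W]}(w^*) \ge d+1$; this is immediate since $(2d+1)/2 = d + 1/2 > d$. One could alternatively phrase the whole thing without contradiction: among any $m$ vertices, the average in-degree within the induced sub-tournament is $(m-1)/2$, so if $m = 2d+2$ then the average is $(2d+1)/2 > d$, forcing at least one of those $m$ vertices to have in-degree (within the sub-tournament, hence within $T$) exceeding $d$; thus the set of vertices of in-degree $\le d$ cannot have size $2d+2$, i.e.\ has size at most $2d+1$. Either formulation is routine, and Proposition~\ref{prop:largedegreevertex} is just the special case reasoning with $d = \lfloor n/2 \rfloor$ or a direct averaging argument over all of $V(T)$.
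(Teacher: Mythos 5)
Your proof is correct: the averaging argument on the induced subtournament (a tournament on $m$ vertices has $\binom{m}{2}$ edges, so average in-degree $(m-1)/2$, forcing some vertex of in-degree $>d$ whenever $m\ge 2d+2$) is exactly the standard argument, and the reversal trick handles out-degrees. The paper itself states Proposition~\ref{prop:degseqbound} as a well-known fact without proof, so there is nothing in the source to compare against beyond noting that your argument is the canonical one.
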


We will also use the following well-known result due to Gallai and Milgram
(see for example~\cite{BondyMurty}). (The \emph{independence number} of a digraph $T$
is the maximal size of a set $X\subseteq V(T)$ such that $T[X]$ contains no edges.)

\begin{thm}\label{thm:GM}
Let $T$ be a digraph with independence number at most $k$.
Then $T$ has a path cover consisting of at most $k$ paths.
\end{thm}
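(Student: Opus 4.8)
The final statement to prove is the Gallai--Milgram theorem (Theorem~\ref{thm:GM}): any digraph $T$ with independence number at most $k$ has a path cover using at most $k$ paths. Since this is a classical result, the plan is to give the standard inductive proof, which is a strengthening of Dilworth-type arguments adapted to arbitrary digraphs.

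\medskip

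\textbf{Setup and inductive scheme.} The plan is to induct on the number of vertices $n = |T|$, proving the slightly more convenient statement: every digraph has a path cover $\mathcal{P}$ with $|\mathcal{P}| \le \alpha(T)$, where $\alpha(T)$ denotes the independence number. Among all path covers of $T$ of minimum size, I would choose one, say $\mathcal{P} = \{P_1, \dots, P_m\}$, for which the total number of vertices lying on paths of length zero (isolated vertices of the cover) is as large as possible; more standard is to pick $\mathcal{P}$ minimizing $\sum_i |P_i|$ is automatic, so instead one picks, among minimum path covers, one for which the set of heads (terminal vertices) $\{h(P_1), \dots, h(P_m)\}$ is "best" in a sense to be exploited. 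Let me instead follow the cleanest route: take a minimum path cover $\mathcal{P}$ and suppose for contradiction $m > \alpha(T)$. Then the $m$ heads $h(P_1), \dots, h(P_m)$ cannot form an independent set, so some head $h(P_i)$ sends an edge to some head $h(P_j)$ with $i \ne j$.

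\medskip

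\textbf{The exchange argument.} The key step is then to derive a contradiction by producing a smaller path cover. The naive attempt — append $P_j$ after $P_i$ via the edge $h(P_i)h(P_j)$ — fails because the edge goes from $h(P_i)$ \emph{to} $h(P_j)$, i.e. into the head of $P_j$ rather than out of its tail. To handle this properly, I would set up the induction more carefully: delete the head $v := h(P_m)$ of one path and apply induction to $T - v$. By induction $T - v$ has a path cover $\mathcal{Q}$ with $|\mathcal{Q}| \le \alpha(T-v) \le \alpha(T)$. If $|\mathcal{Q}| < \alpha(T)$, then adding $v$ as its own path gives a cover of $T$ of size $\le \alpha(T)$, done. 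So assume $|\mathcal{Q}| = \alpha(T) =: k$, with $\mathcal{Q} = \{Q_1, \dots, Q_k\}$. The plan now is: among all such path covers $\mathcal{Q}$ of $T-v$ of size exactly $k$ (there may be several), choose one minimizing the number of paths $Q_i$ whose \emph{tail} $t(Q_i)$ is \emph{not} an in-neighbour of $v$. Call $Q_i$ \emph{good} if $v t(Q_i) \in E(T)$, i.e. $v$ sends an edge to the tail of $Q_i$ (so we could prepend $v$). If some $Q_i$ is good, prepend $v$ to it and we get a path cover of $T$ of size $k$, done. So assume no $Q_i$ is good; I will show this forces $\{v\} \cup \{t(Q_1), \dots, t(Q_k)\}$ to contain, or to be modifiable into, an independent set of size $k+1$, contradicting $\alpha(T) = k$.

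\medskip

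\textbf{Making the tails independent — the main obstacle.} This last part is the crux and the main obstacle. If $v$ is not good for any $Q_i$, then for each $i$ either $t(Q_i) v \in E(T)$ or there is no edge between $v$ and $t(Q_i)$. If additionally the tails $t(Q_1), \dots, t(Q_k)$ were pairwise non-adjacent, then since none of them receives an edge from $v$, the set $\{v, t(Q_1), \dots, t(Q_k)\}$ would be independent of size $k+1$, contradiction. So some pair of tails must be adjacent, say $t(Q_i) t(Q_j) \in E(T)$. But then $t(Q_i) t(Q_j) Q_j$ — prepending $t(Q_i)$, hence the whole path $Q_i$ minus... no. The right move: replace $Q_i$ and $Q_j$ by the single path obtained by going along $Q_i$ in reverse is not allowed in a digraph. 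Instead, one argues: since $t(Q_i) t(Q_j) \in E(T)$, the path $t(Q_i)\, Q_j$ (appending $Q_j$ to the single vertex $t(Q_i)$, legal since $t(Q_i) \to t(Q_j)$) together with $Q_i - t(Q_i)$ is a path cover of the same vertex set with the same number of paths, but now $v$ is good for the path $Q_i - t(Q_i)$ provided $v \to t(Q_i)^+$... this is getting delicate, which is exactly why it's the hard step. The honest plan is to invoke the standard minimality setup: choose $\mathcal{Q}$ of size $k$ minimizing the number of "bad" tails, show that if there are any bad tails a rerouting (using an edge among the tails) strictly decreases this count, contradiction; hence all tails are good or $v$-nonadjacent, and combined with no tail receiving an edge from $v$ we extract an independent set of size $k+1$. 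I expect the careful verification that the rerouting genuinely decreases the count of bad tails (and doesn't increase the number of paths) to be the fiddly technical heart of the argument, so I would present it as a short self-contained lemma about "merging" two paths at their tails.
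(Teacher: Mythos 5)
The paper does not prove Theorem~\ref{thm:GM}; it cites it as a classical result (pointing to~\cite{BondyMurty}), so there is no internal proof to compare against. Your attempt, however, has a concrete gap that would need to be closed before it could stand. After deleting $v$ and obtaining by induction a path cover $\mathcal{Q}$ of $T-v$ with $|\mathcal{Q}|=k$, you call $Q_i$ \emph{good} if $v\,t(Q_i)\in E(T)$, and you observe (correctly) that if no $Q_i$ is good then for each $i$ ``either $t(Q_i)v\in E(T)$ or there is no edge between $v$ and $t(Q_i)$.'' You then conclude that, if moreover the tails were pairwise non-adjacent, the set $\{v,t(Q_1),\dots,t(Q_k)\}$ would be independent. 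That conclusion is false: if $t(Q_i)v\in E(T)$ for even one $i$ --- a possibility your own preceding sentence explicitly allows --- then $v$ and $t(Q_i)$ are adjacent and the set is not independent under the paper's definition (no edges in either direction within the set). Ruling out ``good'' indices only excludes edges \emph{from} $v$ \emph{to} the tails; it says nothing about edges from the tails into $v$. Your initial choice $v:=h(P_m)$ is never subsequently used and does not repair this.

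The second (acknowledged) gap is the rerouting step for an edge $t(Q_i)t(Q_j)$ among the tails: the move you sketch does yield another $k$-path cover, but you never identify a potential it decreases, and it also changes the tail set in a way you would need to track against the rest of the argument. The usual proof --- Bondy's strengthening of Gallai--Milgram --- sidesteps both problems by inducting on $|V(T)|$ with a tail-set invariant: one shows that for \emph{every} path partition $\mathcal{P}$ there is a path partition $\mathcal{P}^*$ with $t(\mathcal{P}^*)\subseteq t(\mathcal{P})$ and $t(\mathcal{P}^*)$ independent (whence $|\mathcal{P}^*|=|t(\mathcal{P}^*)|\le\alpha(T)$). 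When two tails are joined by an edge, one removes a suitable tail vertex, applies the induction, and then uses the containment $t(\mathcal{P}^*)\subseteq t(\mathcal{P})$ supplied by the inductive hypothesis to decide whether the removed vertex is prepended to an existing path or added as a singleton, so that both the containment and the independence survive. Because the invariant concerns the whole tail set rather than a fixed candidate $\{v\}\cup t(\mathcal{Q})$, the directionality problem you ran into never arises. I would recommend proving that strengthening directly rather than trying to patch the current scheme.
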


The following corollary is an immediate consequence of Theorem~\ref{thm:GM}.

\begin{cor}
\label{cor:pathcover}Let $T$ be an oriented graph on $n$ vertices with $\delta(T)\ge n-k$.
Then $T$ has a path cover consisting of at most $k$ paths.
\end{cor}

%Note that the path cover guaranteed by Corollary~\ref{cor:pathcover} can be found in
%polynomial time (see e.g.~the proof of Theorem~\ref{thm:GM} in~\cite{BondyMurty}).

Given a digraph $T$, we define a \emph{covering edge} for a vertex
$v$ to be an edge $xy$ of $T$ such that $xv,vy\in E(T)$. We call $xv$
and $vy$ the \emph{activating edges} of $xy$. Note that if $xy$
is a covering edge for $v$ and $C$ is a cycle in $T$ containing
$xy$ but not $v$, we can form a new cycle $C'$ with $V(C')=V(C)\cup\{v\}$
by replacing $xy$ with $xvy$ in $C$. We will see in Section~\ref{sec:linkagegood} 
that covering edges are easy to find in strongly $2$-connected tournaments.

Recall that, given a path system $\mathcal{P}$, we write $h(\mathcal{P})$ for the
set of heads of paths in $\mathcal{P}$ and $t(\mathcal{P})$ for
the set of tails of paths in $\mathcal{P}$. If $v\in V(\mathcal{P})$, we write $v^{+}$ and
$v^{-}$ respectively for the successor and predecessor of $v$ on
the path in $\mathcal{P}$ containing~$v$. 

The following lemma allows us to take a path cover $\mathcal{P}$
of a digraph and modify it into a path cover $\mathcal{P}'$ with no heads in some ``bad'' set $I$,
without adding any heads or tails in $I\cup J$ for some other ``bad''
set $J$. Moreover, we can do this without losing any edges in some ``good'' set $F\subseteq E(\mathcal{P})$, and without altering too many paths in $\mathcal{P}$.
In our applications, $F$ will consist of covering edges. We require that every vertex
in $I$ has high out-degree. 
\begin{lem}
\label{lem:pathextend-out}Let $T$ be a digraph. Let $I,J\subseteq V(T)$
be disjoint.
Let $\mathcal{P}=\mathcal{P}_{1}\dot\cup\mathcal{P}_{2}$ be a path cover
of $T$ satisfying $h(\mathcal{P}_{2})\cap I=\emptyset$. Let $F\subseteq E(\mathcal{P})$.
Suppose $d^{+}(v)>3(|I|+|J|)+2|F|$ for all $v\in I$. Then there
exists a path cover $\mathcal{P}'$ of $T$ satisfying the following
properties:
\begin{enumerate}
\item $h(\mathcal{P}')\cap I=\emptyset$.
\item $h(\mathcal{P}')\cap J=h(\mathcal{P})\cap J$.
\item $t(\mathcal{P}')\cap(I\cup J)=t(\mathcal{P})\cap(I\cup J)$.
\item $F\subseteq E(\mathcal{P}')$.
\item $|\mathcal{P}'|\le|\mathcal{P}|+|\mathcal{P}_{1}|$.%
\COMMENT{Unfortunately, we can't easily apply the proof of Gallai-Milgram here to avoid increasing
$|\mathcal{P}|$ -- if we do, we lose all our covering edges.}
\item $|\mathcal{P}'\cap\mathcal{P}_{2}|\ge|\mathcal{P}_{2}|-|\mathcal{P}_{1}|$.
\end{enumerate}
If in addition $d^{+}(v)>3(|I|+|J|)+2|F|+|V(\mathcal{P}_{2})|$ for
all $v\in I$, then we may strengthen (vi) to $\mathcal{P}_{2}\subseteq\mathcal{P}'$.\end{lem}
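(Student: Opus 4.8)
\textbf{Proof proposal for Lemma~\ref{lem:pathextend-out}.}

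The plan is to modify $\mathcal{P}$ greedily, repeatedly processing paths whose head lies in $I$. Start with $\mathcal{P}^{(0)} := \mathcal{P}$. As long as there is a path $P \in \mathcal{P}^{(0)}$ with $h(P) \in I$, we do the following. Let $v := h(P)$. We want to extend $P$ by appending an out-neighbour $w$ of $v$, i.e.\ replace $P$ by $Pvw$ where $Pvw := P$ followed by the edge $vw$. There are two cases for $w$: either $w$ lies on another path $P'$ of the current cover (possibly $w = h(P')$), in which case we splice, or $w$ is the head of $P$ itself (i.e.\ $P$ is a cycle-like obstruction) which we must avoid. The key point is that $v$ has very large out-degree, so there are many candidates for $w$, and we choose $w$ to avoid all the vertices we must not disturb. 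Specifically, let me set up a bounded ``forbidden'' set $X_v$ of size roughly $3(|I|+|J|) + 2|F|$ (or plus $|V(\mathcal{P}_2)|$ in the strengthened case) and argue $d^+(v) > |X_v|$ forces a good choice of $w$ outside $X_v$.

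Here is the splicing in more detail. Suppose $v = h(P)$ with $v \in I$, and suppose we have found $w \in N^+(v)$ with $w \notin V(P)$, so $w$ lies on some other path $P' = s \dots w^- w w^+ \dots e$ of the current cover. Then we replace $P$ and $P'$ by the two paths $P\,v\,w\,w^+ \dots e$ (i.e.\ $P$ followed by $vw$ followed by the suffix of $P'$ from $w$ onward) and $s \dots w^-$ (the prefix of $P'$ strictly before $w$; this is empty, i.e.\ contributes no path, if $w = s = t(P')$). Note this operation does not increase the total number of paths, does not create any new heads except possibly $w^-$ (the new head of the prefix piece), does not create any new tails at all, keeps all edges of $P$ and of $P'$ except the single edge $w^- w$, and removes $v$ from the set of heads. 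To make this work we need $w \notin V(P)$ (so we genuinely splice two distinct paths rather than forming a closed walk), $w^- \notin I \cup J$ (so we don't create a bad head; here $w^- = t(P')$ is allowed since it's already a tail, but we must forbid $w$ being the out-neighbour on $P'$ of some vertex of $I \cup J$ other than a tail — actually cleaner: forbid $w^- \in I \cup J$ outright and separately handle when $w^-$ doesn't exist), $w^- w \notin F$ (so we don't lose a good edge), and, crucially, $w \ne h(P'')$ for any $P'' \in \mathcal{P}_2$ if we want to protect $\mathcal{P}_2$'s heads, and more strongly $w \notin V(\mathcal{P}_2)$ in the strengthened case. Counting: the forbidden out-neighbours of $v$ are: vertices $w$ with $w^- \in I \cup J$ (at most $|I| + |J|$ of them), vertices $w$ with $w^- w \in F$ (at most $|F|$), vertices in $V(P)$ — but here I need to be careful, since $V(P)$ could be large. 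This is the main subtlety.

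To handle $V(P)$ I would not forbid all of it; instead observe that if $w \in V(P) \setminus \{v\}$ and $w \ne v$, then appending $vw$ to $P$ creates a path with a closed sub-walk, from which we can still extract a path from $t(P)$ to $v^-$... hmm, that changes the head. The cleaner fix, and the one I expect the authors use: forbid $w \in h(\mathcal{P}^{(0)}) \cap I$ and $w = $ any vertex already ``processed'', and carefully order the processing so that at each step the relevant path $P$ is short or so that the only dangerous vertices of $V(P)$ are $v$ itself and its predecessors, which are few. Actually the robust approach: process the paths so that each vertex $v \in I$ is the head of at most one path at the moment it is processed, forbid $w \in \{v\} \cup t(\mathcal{P}^{(0)})$ together with the sets above, and note $w \in V(P) \setminus \{v\}$ is automatically fine because... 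I think the real argument is that we only ever append to heads, never prepend to tails, so tails are preserved throughout, hence $|t(\text{cover})| = |\mathcal{P}^{(0)}|$ is controlled, and the number of heads we must still fix only decreases. \textbf{The main obstacle} is exactly this: bounding the interaction with $V(P)$ and ensuring the process terminates (the multiset of heads-in-$I$ strictly decreases, say by a monovariant argument on $\sum_{P} [\,h(P) \in I\,]$, and each step strictly decreases it since $v$ leaves the head set and no new head lies in $I$). Once termination is clear, properties (i)--(iv) are immediate from the invariants maintained at each step; (v) follows since each step merges a path ending in $I$ with another path and the only paths ever ``ending in $I$'' originate from $\mathcal{P}_1$ together with the at most $|\mathcal{P}_1|$ prefix-pieces we may create, giving $|\mathcal{P}'| \le |\mathcal{P}| + |\mathcal{P}_1|$; (vi) follows by tracking that each step destroys at most one path of $\mathcal{P}_2$ (the path $P'$ we splice into, if it happened to be in $\mathcal{P}_2$) and there are at most $|\mathcal{P}_1|$ steps; and the strengthened (vi), $\mathcal{P}_2 \subseteq \mathcal{P}'$, follows by additionally forbidding $w \in V(\mathcal{P}_2)$, which the enlarged degree bound $d^+(v) > 3(|I|+|J|) + 2|F| + |V(\mathcal{P}_2)|$ exactly pays for.
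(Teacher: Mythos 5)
You have the right broad strategy (greedily extend heads in $I$ by picking a well-chosen out-neighbour $w$ of $v$, controlling a forbidden set via the degree hypothesis, and arguing termination by a monovariant on the number of heads in $I$), but there is a genuine gap at exactly the point you flag as the ``main obstacle,'' and your attempted fixes do not resolve it.

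In your splicing step you remove only the edge $w^-w$ and attach the whole suffix $w\,w^+\cdots e$ of $P'$ to the end of $P$. This operation is simply not defined when the chosen out-neighbour $w$ lies on $P$ itself: in that case $P = t(P)\cdots w^- w\, w^+\cdots v$, and after removing $w^-w$ and appending $vw$ you would form $w\,w^+\cdots v\,w$, which revisits $w$ and is not a path. You cannot forbid all of $V(P)$ (it may be large, so the degree bound $d^+(v) > 3(|I|+|J|)+2|F|$ gives you no purchase on it), and the alternatives you float --- ordering the processing, forbidding ``processed'' vertices, observing that ``we only append to heads'' --- do not give a well-defined step in this case. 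The paper's resolution is small but decisive: instead of removing only $w^-w$, remove \emph{both} edges $w^-w$ and $ww^+$ from the path $Q$ containing $w$, so that $w$ becomes an isolated vertex, and only then append $vw$ to the piece $P^*$ containing $v$. With this modification the step is well defined whether or not $Q=P$: if $Q=P$ the split produces pieces $t(P)\cdots w^-$ and $w^+\cdots v$, and appending $vw$ to the latter gives the honest path $w^+\cdots v\, w$. This is also why the conclusion is the weaker bound $|\mathcal{P}'|\le|\mathcal{P}|+|\mathcal{P}_1|$ rather than $|\mathcal{P}'|\le|\mathcal{P}|$: each step can increase the number of paths by one (the detached suffix $w^+\cdots h(Q)$ becomes its own path), whereas your version, if it were valid, would not increase the count at all --- a sign that you and the paper are doing genuinely different surgery on $Q$. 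Your accounting of the forbidden set, the invariants (P1)--(P4), and the strengthened clause with the extra $|V(\mathcal{P}_2)|$ term are all in line with the paper once the step is repaired.
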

\begin{proof}
We will use the degree condition on the vertices in~$I$ in the hypothesis to repeatedly extend
paths with heads in $I$ out of $I$, breaking other paths in $\mathcal{P}$
as a result. We must ensure that we do not create new
paths with endpoints in $I\cup J$ in the process. Let $r:=|\mathcal{P}_{1}|$ and $\mathcal{P}^{0}:=\mathcal{P}$.
We shall find path covers $\mathcal{P}^{1},\dots,\mathcal{P}^{r}$
of $T$ such that the following properties hold for all $0\le i\le r$:
\begin{enumerate}
\item [(P1)]$|h(\mathcal{P}^{i})\cap I|\le r-i$.
\item [(P2)]$h(\mathcal{P}^{i})\cap J=h(\mathcal{P})\cap J$.
\item [(P3)]$t(\mathcal{P}^{i})\cap(I\cup J)=t(\mathcal{P})\cap(I\cup J)$.
\item [(P4)]$F\subseteq E(\mathcal{P}^{i})$.
\item [(P5)]$|\mathcal{P}^{i}|\le|\mathcal{P}|+i$.
\item [(P6)]$|\mathcal{P}^{i}\cap\mathcal{P}_{2}|\ge|\mathcal{P}_{2}|-i$.
\end{enumerate}
If this is possible, we may then take $\mathcal{P}':=\mathcal{P}^{r}$.

By hypothesis, $\mathcal{P}^{0}$ satisfies (P1)--(P6). 
So suppose we have found $\mathcal{P}^{0},\dots,\mathcal{P}^{i-1}$ for
some $i\in [r]$. We then form $\mathcal{P}^{i}$ as follows. If
$|h(\mathcal{P}^{i-1})\cap I|\le r-i$, we simply let $\mathcal{P}^{i}:=\mathcal{P}^{i-1}$.%
      \COMMENT{This case could arise if not every path in $\mathcal{P}_0$ has head in $I$.}
Otherwise, let $P\in\mathcal{P}^{i-1}$ be a path with head $v\in I$.
We will form $\mathcal{P}^{i}$ by extending the head $v$ of $P$ and
breaking the path in $\mathcal{P}^{i-1}$ which $P$ now intersects
into two subpaths. Define
\[
X:=\{x\in V(T):\{x^{+},x,x^{-}\}\cap(I\cup J)\ne\emptyset\}.
\]
We have
\[
d^{+}(v)>3(|I|+|J|)+2|F|\ge|X|+|V(F)|\ge|X\cup V(F)|,
\]
and so there exists $w\in N^{+}(v)\setminus(X\cup V(F))$. Let $Q$
be the path in $\mathcal{P}^{i-1}$ containing $w$ (note that we may have $Q=P$). Split $Q$ into (at most)
two paths and an isolated vertex by removing any of the edges $w^{-}w,ww^{+}$ that exist,
and let $\mathcal{P}^{*}$ be the set of paths obtained from $\mathcal{P}^{i-1}$ in this way. Let $P^{*}$
be the path in $\mathcal{P}^{*}$ containing~$v$. (Note that $P^{*}=P$
unless $w \in V(P)$.) We then form $\mathcal{P}^{i}$ by replacing $P^{*}$
by $P^{*}vw$ in~$\mathcal{P}^{*}$. 

First suppose $w\in \textnormal{Int}(Q)$. Then  $\mathcal{P}^{i}$ is a path cover of $T$ such that%
\begin{align*}
h(\mathcal{P}^{i}) =(h(\mathcal{P}^{i-1})\setminus\{v\})\cup\{w,w^{-}\} \ \ \ \ \ \text{and} \ \ \ \ \ 
t(\mathcal{P}^{i}) =t(\mathcal{P}^{i})\cup\{w^{+}\}.
\end{align*}
Since $w\notin X$, we have $w,w^{-}\notin I$ and hence
\[
|h(\mathcal{P}^{i})\cap I|=|h(\mathcal{P}^{i-1})\cap I|-1\le r-i.
\]
Thus (P1) holds. Similarly, 
\begin{align*}
h(\mathcal{P}^{i})\cap J & =h(\mathcal{P}^{i-1})\cap J=h(\mathcal{P})\cap J,\\
t(\mathcal{P}^{i})\cap(I\cup J) & =t(\mathcal{P}^{i-1})\cap(I\cup J)=t(\mathcal{P})\cap(I\cup J),
\end{align*}
and so (P2) and (P3) hold. By similar arguments, (P1)--(P3) also hold if $w$ is an endpoint of~$Q$. Since $w\notin V(F)$ and $F\subseteq E(\mathcal{P}^{i-1})$
we have $F\subseteq E(\mathcal{P}^{i})$ and (P4) holds. (P5) holds too since $|\mathcal{P}^{i}|\le |\mathcal{P}^{i-1}|+1$.
Finally, we have altered at most two paths in $\mathcal{P}^{i-1}$. One of
these had its head in $I$, so we have altered at most one path in $\mathcal{P}^{i-1}\cap\mathcal{P}_{2}$.
Thus (P6) holds. 

If in addition we have
\[
d^{+}(v)>3(|I|+|J|)+2|F|+|V(\mathcal{P}_{2})|,
\]
then we may use almost exactly the same argument to prove the strengthened
version of the result. Instead of choosing $w\in N^{+}(v)\setminus(X\cup V(F))$,
we may choose $w \in N^{+}(v)\setminus(X\cup V(F) \cup V(\mathcal{P}_{2}))$.%
	\COMMENT{VP: The last sentence replaces 
``We simply redefine $F'$ to be the union of $V(\mathcal{P}_{2})$ and
the set of all endvertices of edges in $F$.'', where $F'$ was undefined.} 
We also strengthen (P6) to the requirement that
$\mathcal{P}_{2}\subseteq\mathcal{P}^{i}$. The strengthened (P6) must hold in each step since we now have that $w\notin V(\mathcal{P}_{2})$.
\end{proof}

The following analogue of Lemma~\ref{lem:pathextend-out} for tails can be obtained by reversing the orientation of each edge of~$T$.

\begin{lem}
\label{lem:pathextend-in}Let $T$ be a digraph. Let $I,J\subseteq V(T)$ be disjoint.
Let $\mathcal{P}=\mathcal{P}_{1}\dot\cup\mathcal{P}_{2}$
be a path cover of $T$ satisfying $t(\mathcal{P}_{2})\cap I=\emptyset$.
Let $F\subseteq E(\mathcal{P})$. Suppose $d^{-}(v)>3(|I|+|J|)+2|F|$
for all $v\in I$. Then there exists a path cover $\mathcal{P}'$
of $T$ satisfying the following properties:
\begin{enumerate}
\item $t(\mathcal{P}')\cap I=\emptyset$.
\item $t(\mathcal{P}')\cap J=t(\mathcal{P})\cap J$.
\item $h(\mathcal{P}')\cap(I\cup J)=h(\mathcal{P})\cap(I\cup J)$.
\item $F\subseteq E(\mathcal{P}')$.
\item $|\mathcal{P}'|\le|\mathcal{P}|+|\mathcal{P}_{1}|$.
\item $|\mathcal{P}'\cap\mathcal{P}_{2}|\ge|\mathcal{P}_{2}|-|\mathcal{P}_{1}|$. 
\end{enumerate}
If in addition $d^{-}(v)>3(|I|+|J|)+2|F|+|V(\mathcal{P}_{2})|$ for
all $v\in I$, then we may strengthen (vi) to $\mathcal{P}_{2}\subseteq\mathcal{P}'$.
\end{lem}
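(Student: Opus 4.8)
The plan is to deduce Lemma~\ref{lem:pathextend-in} from Lemma~\ref{lem:pathextend-out} by passing to the \emph{reverse digraph}. Concretely, given a digraph $T$, let $\overleftarrow{T}$ denote the digraph on the same vertex set obtained by reversing the orientation of every edge; thus $xy\in E(\overleftarrow{T})$ if and only if $yx\in E(T)$. Under this operation, out-degrees and in-degrees swap: $d^+_{\overleftarrow{T}}(v)=d^-_T(v)$ for every $v$. Moreover, a directed path $P=v_0v_1\cdots v_m$ in $T$ corresponds to the directed path $\overleftarrow{P}=v_m v_{m-1}\cdots v_0$ in $\overleftarrow{T}$, which has $h(\overleftarrow{P})=t(P)$ and $t(\overleftarrow{P})=h(P)$; it covers the same vertex set and uses the reversed versions of the same edges. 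Consequently a path cover $\mathcal P$ of $T$ induces a path cover $\overleftarrow{\mathcal P}$ of $\overleftarrow{T}$ with $h(\overleftarrow{\mathcal P})=t(\mathcal P)$, $t(\overleftarrow{\mathcal P})=h(\mathcal P)$, and $V(\mathcal P)=V(\overleftarrow{\mathcal P})$; also $|\overleftarrow{\mathcal P}|=|\mathcal P|$ and this bijection respects unions, so $\overleftarrow{\mathcal P}=\overleftarrow{\mathcal P_1}\,\dot\cup\,\overleftarrow{\mathcal P_2}$.

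The steps are then straightforward translation. Starting from the hypotheses of Lemma~\ref{lem:pathextend-in} for $T$, $I$, $J$, $\mathcal P=\mathcal P_1\dot\cup\mathcal P_2$ and $F$: the condition $t(\mathcal P_2)\cap I=\emptyset$ becomes $h(\overleftarrow{\mathcal P_2})\cap I=\emptyset$; the set $F$ corresponds to an edge set $\overleftarrow{F}\subseteq E(\overleftarrow{\mathcal P})$ with $|\overleftarrow{F}|=|F|$ and $V(\overleftarrow{F})=V(F)$; and the degree condition $d^-_T(v)>3(|I|+|J|)+2|F|$ for all $v\in I$ becomes $d^+_{\overleftarrow{T}}(v)>3(|I|+|J|)+2|\overleftarrow{F}|$ for all $v\in I$. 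So Lemma~\ref{lem:pathextend-out} applies to $\overleftarrow{T}$, yielding a path cover $\mathcal Q$ of $\overleftarrow{T}$ satisfying (i)--(vi) of that lemma. Now set $\mathcal P':=\overleftarrow{\mathcal Q}$, the reversal back in $T$. Reversing each of the six conclusions and using the head/tail swap turns them into exactly the six desired conclusions of Lemma~\ref{lem:pathextend-in}: conclusion (i) $h(\mathcal Q)\cap I=\emptyset$ becomes $t(\mathcal P')\cap I=\emptyset$; (ii) $h(\mathcal Q)\cap J=h(\overleftarrow{\mathcal P})\cap J$ becomes $t(\mathcal P')\cap J=t(\mathcal P)\cap J$; (iii) $t(\mathcal Q)\cap(I\cup J)=t(\overleftarrow{\mathcal P})\cap(I\cup J)$ becomes $h(\mathcal P')\cap(I\cup J)=h(\mathcal P)\cap(I\cup J)$; (iv) $\overleftarrow{F}\subseteq E(\mathcal Q)$ becomes $F\subseteq E(\mathcal P')$; (v) and (vi) are cardinality statements preserved verbatim under the bijection since $|\mathcal Q|=|\mathcal P'|$, $|\overleftarrow{\mathcal P_i}|=|\mathcal P_i|$, and $\mathcal Q\cap\overleftarrow{\mathcal P_2}$ corresponds to $\mathcal P'\cap\mathcal P_2$. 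Finally, the strengthened hypothesis $d^-_T(v)>3(|I|+|J|)+2|F|+|V(\mathcal P_2)|$ translates to $d^+_{\overleftarrow{T}}(v)>3(|I|+|J|)+2|\overleftarrow{F}|+|V(\overleftarrow{\mathcal P_2})|$ (using $|V(\overleftarrow{\mathcal P_2})|=|V(\mathcal P_2)|$), so the strengthened conclusion $\overleftarrow{\mathcal P_2}\subseteq\mathcal Q$ of Lemma~\ref{lem:pathextend-out} becomes $\mathcal P_2\subseteq\mathcal P'$.

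There is essentially no serious obstacle here — this is a textbook ``apply the result to the reverse digraph'' argument, and the paper itself already announces it in the one-line remark preceding the statement. The only thing requiring a modicum of care is bookkeeping: checking that the reversal map is a genuine bijection between path covers that simultaneously reverses heads with tails, preserves vertex sets and sizes, commutes with the partition $\mathcal P_1\dot\cup\mathcal P_2$, and sends $F$ to an edge set with the same cardinality and vertex set. Once these compatibilities are recorded, each of the seven conclusions (i)--(vi) plus the strengthening transfers mechanically. Accordingly, in the write-up I would state the reversal correspondence once, list the translations of the hypotheses, invoke Lemma~\ref{lem:pathextend-out}, and then read off the conclusions, keeping the exposition to a short paragraph.
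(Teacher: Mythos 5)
Your proposal is correct and takes exactly the same approach as the paper, which dispenses with a written proof and simply remarks that the lemma ``can be obtained by reversing the orientation of each edge of $T$.'' You have spelled out the routine bookkeeping that the paper leaves implicit, but the underlying argument is identical.
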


The following lemma is the main building block of the proof of Theorem~\ref{thm:mainresult}.
It will be applied repeatedly to find the required edge-disjoint Hamilton cycles. 
Roughly speaking, the lemma guarantees a Hamilton cycle 
provided that we have well-chosen disjoint (almost) dominating sets $A_i$ and $B_i$ which are linked by short paths
containing covering edges for all vertices in these dominating sets. (This is the linked dominating structure described in
Sections~\ref{sec:intro} and \ref{sec:proofsketch}.) An additional assumption is that
we have not removed too many edges of our tournament $T$ already. In general, the statement and proof roughly follow the sketch in
Section~\ref{sec:proofsketch}, with the addition of a set $X\subseteq V(T)$. 

The role of $X$ is as follows.
The sets $A_i$ and $B_i$ in the lemma dominate only almost all vertices of $T$, so we have some small
exceptional sets $E_A$ and $E_B$ of vertices which are not dominated.
We will  use Lemmas~\ref{lem:pathextend-out} and \ref{lem:pathextend-in}
to extend a certain path system out of these exceptional sets $E_A$ and $E_B$. 
For this we need that the vertices in $E_A\cup E_B$ have relatively high in-
and out-degree. But $T$ may have vertices which do not satisfy this degree condition. 
When we apply Lemma~\ref{lem:mainengine},
these problematic vertices will be the elements of $X$.

\begin{lem}
\label{lem:mainengine}Let $C:=10^{6}$, $k\ge20$, $t:=164k$,
and $c:=\lceil \log 50t+1\rceil$.%
    \COMMENT{VP: when checking constants, use $c \leq \log_{2}k + 15$}
Suppose that $T$ is an oriented graph of order
$n$ satisfying $\delta(T)>n-4k$ and $\delta^{0}(T)\ge Ck^{2}$.
Suppose moreover that $T$ contains disjoint sets of vertices $A_{1},\dots,A_{t}$,
$B_{1},\dots,B_{t}$ and $X$, a matching $F$, and vertex-disjoint
paths $P_{1},\dots,P_{t}$ such that the following conditions hold, where $A^{*}:=A_{1}\cup\dots\cup A_{t}$
and $B^{*}:=B_{1}\cup\dots\cup B_{t}$:
\begin{enumerate}
\item $2\le |A_{i}|\le c$ for%
   \COMMENT{Added the lower bound. It implies that $A$ and $A^\prime$ (as defined in the proof) are disjoint.}
all $i\in [t]$. Moreover, $T[A_{i}]$ is a transitive tournament 
whose head has out-degree at least $n/3$ in $T$.
\item There exists a set $E_A\subseteq V(T)\setminus (A^{*}\cup B^{*})$,
such that each $A_{i}$ out-dominates $V(T)\setminus (A^{*}\cup B^{*}\cup E_A)$.
Moreover, $|E_A|\le d^{-}/40$, where $d^{-}:=\min \{d_T^-(v) : v\in E_A\setminus X\}$.
\item $2\le |B_{i}|\le c$ for all $i\in [t]$. Moreover, $T[B_{i}]$ is a transitive tournament whose
tail has in-degree at least $n/3$ in $T$.
\item There exists a set $E_B\subseteq V(T)\setminus (A^{*}\cup B^{*})$,
such that each $B_{i}$ in-dominates $V(T)\setminus (A^{*}\cup B^{*}\cup E_B)$.
Moreover, $|E_B|\le d^{+}/40$, where $d^{+}:=\min\{d_T^+(v): v\in E_B\setminus X\}$.
\item For all $i\in [t]$, $P_{i}$ is a path from the head of $T[B_{i}]$ to the
tail of $T[A_{i}]$ which is internally disjoint from $A^{*}\cup B^{*}$.
Moreover, $|P_{1}\cup\dots\cup P_{t}|\le n/20$.
\item $F\subseteq E(P_{1}\cup\dots\cup P_{t})$ and $V(F) \cap (A^*\cup B^*) = \emptyset$. Moreover, $F = \{e_v:v\in A^* \cup B^*\}$,
where $e_v$ is a covering edge for $v$ and $e_v \ne e_{v'}$ whenever $v \ne v'$. In particular, $|F|=|A^*\cup B^*|\le 2ct$.
\item We have $X\subseteq V(P_{1}\cup\dots\cup P_{t})$, $X\cap(A^{*}\cup B^{*})=\emptyset$
and $|X|\le2kt$.
\end{enumerate}%\FloatBarrier
Then $T$ contains a Hamilton cycle.\end{lem}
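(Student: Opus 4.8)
The plan is to follow the strategy sketched in Section~\ref{sec:proofsketch}, turning the heuristic description into a rigorous argument. Set $A^*:=A_1\cup\dots\cup A_t$, $B^*:=B_1\cup\dots\cup B_t$, $P^*:=P_1\cup\dots\cup P_t$, and let $E:=E_A\cup E_B$ be the combined exceptional set. First I would record the basic size estimates: $|A^*\cup B^*|\le 2ct$, $|P^*|\le n/20$, $|X|\le 2kt$, and (since $c\le \log k+15$ and $t=164k$) all of these are $O(k^2\log k)$, which is comfortably smaller than the degree parameters $\delta^0(T)\ge Ck^2$ and $n/3$ appearing in the hypotheses. I would also note that by Propositions~\ref{prop:largedegreevertex} and~\ref{prop:degseqbound}, the bounds $|E_A|\le d^-/40$ and $|E_B|\le d^+/40$ mean the vertices of $E$ that we need to escape from have in-/out-degree vastly exceeding $|E|$, $|X|$, $|F|$ and the total number of paths we will handle.

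\medskip
\noindent\textbf{Main construction.} Consider $T_0:=T-(A^*\cup B^*)$. Every vertex of $T_0$ outside $V(P^*)$ has, in $T$, degree at least $n-4k$, and at most $2ct+|V(P^*)|$ of its neighbours were deleted, so $\delta(T_0-V(P^*))\ge n-4k-2ct-|V(P^*)|$; crucially this is still at least $|T_0|-t$ if the constants are chosen as stated (here $t=2i+1$-type bound $164k$ absorbs $4k$ plus error terms). Apply Corollary~\ref{cor:pathcover} to cover $T_0-V(P^*)$ — equivalently, after reinserting $P^*$ as a path system, cover $V(T)\setminus(A^*\cup B^*)$ — by a path system $\mathcal Q$ with at most $t$ paths, arranged so that the paths $P_1,\dots,P_t$ (which carry the matching $F$ of covering edges) survive as subpaths; one gets this by running Gallai--Milgram on the contracted oriented graph in which each $P_\ell$ is contracted to a vertex, then uncontracting. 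Next I would use Lemmas~\ref{lem:pathextend-out} and~\ref{lem:pathextend-in} with $I:=E_A$ (resp.\ $E_B$) and $J$ the set of path-endpoints we must protect, to modify $\mathcal Q$ into a path cover $\mathcal Q'$ of $V(T)\setminus(A^*\cup B^*)$ with \emph{no tail in $E_A$ and no head in $E_B$}, keeping all of $F$, and increasing the number of paths by at most $t$. The degree hypotheses (2) and (4), together with $|F|\le 2ct$ and $|E|,|J|=O(k^2\log k)$, are exactly what is needed so that $d^-(v)$ (resp.\ $d^+(v)$) exceeds the required $3(|I|+|J|)+2|F|$; the strengthened clause of those lemmas, requiring $d^\pm(v)>\cdots+|V(\mathcal P_2)|$, is why we designate the paths $P_\ell$ as $\mathcal P_2$ so they are never broken — except, as the sketch warns, we may be forced to break at most one of them when $X$ intersects things, and I would track this single broken path carefully, using the "spare" paths built in at the outset.

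\medskip
\noindent\textbf{Linking up.} Now I have a path system, with $O(k)$ paths total, covering $V(T)\setminus(A^*\cup B^*)$, whose tails avoid $E_A$ and whose heads avoid $E_B$, and which still contains every covering edge of $F$. Each path head $h$ lies outside $E_A$, so it is out-dominated by every $A_i$: pick an unused index $i$, route from $h$ to a vertex $a\in A_i$, then through the transitive tournament $T[A_i]$ (which has a Hamilton path) down to its tail $a_i'$, follow $P_i$ up to the head $b_i$ of $T[B_i]$, traverse $T[B_i]$ from $b_i$ down to a vertex $b$ in-dominating the tail of the next path (possible since that tail avoids $E_B$), and continue cyclically. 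Since the number of paths is at most $t$ (after the extension steps, bounded by the choice $t=164k$ being large enough to absorb the two $+t$ increments from the path-cover and from Lemmas~\ref{lem:pathextend-out}/\ref{lem:pathextend-in}), there are enough blocks $A_i,B_i,P_i$ to serve as connectors, and the head/tail degree conditions $\ge n/3$ in (1) and (3) guarantee the short connecting edges exist and can be chosen internally disjoint from everything used so far. This yields a single cycle $C$ spanning $V(T)\setminus(A^*\cup B^*)$ plus possibly some of $A^*\cup B^*$, and containing $F$. Finally, since $F=\{e_v:v\in A^*\cup B^*\}$ with $e_v$ a covering edge for $v$, and $C\supseteq F$ while all of $A^*\cup B^*$ lies off $C$'s "spine" (by $V(F)\cap(A^*\cup B^*)=\emptyset$), I insert each missing $v$ by replacing $e_v=xy$ with $x v y$; the covering edges are distinct and vertex-disjoint as edges, so these insertions do not interfere, and the result is a Hamilton cycle of $T$.

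\medskip
\noindent\textbf{Main obstacle.} The delicate point is bookkeeping the exceptional sets and the single possibly-broken $P_\ell$ simultaneously: I must (i) ensure the path system fed to Lemmas~\ref{lem:pathextend-out} and~\ref{lem:pathextend-in} has the right $\mathcal P_1/\mathcal P_2$ split so that $F$ and as many $P_\ell$ as possible are preserved, (ii) verify that the break, if forced, can be placed so that the covering edge it carries is not lost and the resulting two subpaths can still be linked into the cycle, and (iii) check that after all increments the number of paths never exceeds the supply of connector blocks $t$. The constants $C=10^6$, $t=164k$, $c=\lceil\log 50t+1\rceil$ are tuned precisely so that every one of these inequalities holds with room to spare; verifying them is routine but must be done with care, using $k\ge 20$ to control the logarithmic terms.
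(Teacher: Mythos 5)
Your high-level plan matches the paper's, but there are several concrete gaps in the middle that would derail the argument if carried out as described.

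First, your two rounds of Lemma~\ref{lem:pathextend-out}/\ref{lem:pathextend-in} (one for $E_A$, one for $E_B$) are not enough. After moving tails out of $E_A$ and heads out of $E_B$, the path system still has endpoints inside $A^*\cup B^*$: the paths $P_\ell$ themselves begin in $B$ and end in $A'$, and the extension steps may have created further endpoints in $A'\cup B$. Such endpoints cannot be handled by the domination step, because conditions (ii) and (iv) only supply a dominating edge for vertices \emph{outside} $A^*\cup B^*$. The paper therefore runs two additional rounds of the extension lemmas (with $I:=B'$ and $I:=A$, using the strengthened variants), plus an intermediate direct-extension step into $A\cup B'$, precisely to remove all such endpoints from $A^*\cup B^*$ before linking. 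Your proposal has no analogue of this, so the linking step as written cannot get started.

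Second, the treatment of $X$ is wrong, not just incomplete. Condition (ii) defines $d^-$ as a minimum over $E_A\setminus X$ only; vertices in $E_A\cap X$ may have arbitrarily low in-degree in $T$, so taking $I:=E_A$ would break the hypothesis $d^-(v)>3(|I|+|J|)+2|F|$ of Lemma~\ref{lem:pathextend-in}. You must take $I:=E_A\setminus X$ (resp.\ $E_B\setminus X$), and you must also put $X$ into the ``protected'' set $J$ so that the breaking process never creates new endpoints in $X$ (the point of (vii) being that $X\subseteq V(\mathcal P_2)$, so initially no path endpoints lie in $X$, and this invariant must be preserved). Your remark that ``we may be forced to break at most one'' $P_\ell$ is a serious underestimate: the paper breaks up to $|\mathcal P_1|\approx 4k$ paths per round over four rounds, and keeps the count under control via the displayed bounds (Q4) and (Q6), not by tracking a single break.

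Third, the linking paragraph has the direction of domination and of the paths $P_i$ reversed. Heads of the path cover avoid $E_B$ and are therefore \emph{in}-dominated by each $B_i$ (the head sends an edge into $B_i$), while tails avoid $E_A$ and are \emph{out}-dominated by each $A_i$ (a vertex of $A_i$ sends an edge to the tail). Your text routes a head $h$ into $A_i$ and then follows $P_i$ from $a_i'$ to $b_i$, but $P_i$ runs from $b_i$ to $a_i'$, so you are traversing it backwards and attaching it to the wrong end. This is more than sloppiness: the construction only works because of the asymmetry between the structures on the $A$-side and the $B$-side, and it needs to be set out with the orientations correct.

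Finally, a smaller point you do flag but do not resolve: after the extension rounds you cannot simply ``use connectors until they run out''; you must first split paths in $\mathcal R:=\mathcal Q\setminus\mathcal P_2$ so that $|\mathcal R|$ exactly equals $|\mathcal Q\cap\mathcal P_2|$, and you must do so without cutting any edge of $F$ or any edge incident to $E_A\cup E_B\cup A^*\cup B^*$. The paper verifies that enough ``safe'' edges exist to split on; this counting is needed and is not routine enough to omit.
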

\begin{proof}
Without loss of generality, suppose that $d^{-}\le d^{+}$. (Otherwise,
reverse the orientation of every edge in $T$.) Write $a_{i}$ for the head of $T[A_{i}]$ and
 $a_{i}'$ for its tail. Similarly, write $b_{i}$ for the head of $T[B_{i}]$ and $b_{i}'$ for its tail.
Let 
$$
A:=\{a_{1},\dots,a_{t}\}, \  \ A^\prime:=\{a_{1}',\dots,a_{t}'\}, \ \ B:=\{b_{1},\dots,b_{t}\} \ \ \mbox{and} \ \ B^\prime:=\{b_{1}',\dots,b_{t}'\}.
$$
Thus the sets $A,A^\prime,B,B^\prime$ are disjoint, and by condition~(v) the paths $P_i$ join $B$ to $A^\prime$. Let
\begin{align*}
N  :=V(T)\setminus(A^{*}\cup B^{*}), \ \ \ 
T'  :=T[N\cup A^\prime\cup B], \ \ \ \mbox{and} \ \ \ 
\mathcal{P}_{2} & :=\{P_{1},\dots,P_{t}\}.
\end{align*}
By Corollary~\ref{cor:pathcover}, there exists a path cover $\mathcal{P}_{1}$
of $N\setminus V(\mathcal{P}_{2})$ with $|\mathcal{P}_{1}|\le4k$.
Then $\mathcal{Q}_{1}:=\mathcal{P}_{1}\dot\cup\mathcal{P}_{2}$ is a path
cover of $T'$. The situation is illustrated in Figure~\ref{fig:mainfig}.
\begin{figure}
\centering
\begin{tikzpicture}[scale=1, line width = 0.5mm]
\tikzstyle{every node}=[font=\footnotesize]

\draw [-<<-] (0,0.5) -- (1,0.5); %Draw and label B_1, ..., B_t
\draw [-<<-] (0,2) -- (1,2);
\draw [-<<-] (0,3) -- (1,3);
\draw [-<<-] (0,4) -- (1,4);

\node [draw=none, below] at (0.5, 0.45) {$B_t$};
\node [draw=none, below] at (0.5, 1.95) {$B_3$};
\node [draw=none, below] at (0.5, 2.95) {$B_2$};
\node [draw=none, below] at (0.5, 3.95) {$B_1$};

\draw [-<-] (2.25,0.5) -- (3.75,0.5); %Draw and label Q_1, ..., Q_t
\draw [-<-] (2.25,2) -- (3.75,2);
\draw [-<-] (2.25,3) -- (3.75,3);
\draw [-<-] (2.25,4) -- (3.75,4);

\node [draw=none, above] at (3, 0.5) {$Q_t$};
\node [draw=none, above] at (3, 2) {$Q_3$};
\node [draw=none, above] at (3, 3) {$Q_2$};
\node [draw=none, above] at (3, 4) {$Q_1$};

\draw [-<<-] (5,0.5) -- (6,0.5); %Draw and label A_1, ..., A_t
\draw [-<<-] (5,2) -- (6,2);
\draw [-<<-] (5,3) -- (6,3);
\draw [-<<-] (5,4) -- (6,4);

\node [draw=none, below] at (5.5, 0.45) {$A_t$};
\node [draw=none, below] at (5.5, 1.95) {$A_3$};
\node [draw=none, below] at (5.5, 2.95) {$A_2$};
\node [draw=none, below] at (5.5, 3.95) {$A_1$};

%Draw and label boxes for A, A', B, B'
\draw [line width = 0.2mm] (-0.1, 0.9) -- (-0.1, 0.4) -- (0.1, 0.4) -- (0.1, 0.9);
\draw [line width = 0.2mm] (0.1, 1.6) -- (0.1, 4.1) -- (-0.1, 4.1) -- (-0.1, 1.6);

\draw [line width = 0.2mm] (0.9, 0.9) -- (0.9, 0.4) -- (1.1, 0.4) -- (1.1, 0.9);
\draw [line width = 0.2mm] (1.1, 1.6) -- (1.1, 4.1) -- (0.9, 4.1) -- (0.9, 1.6);

\draw [line width = 0.2mm] (4.9, 0.9) -- (4.9, 0.4) -- (5.1, 0.4) -- (5.1, 0.9);
\draw [line width = 0.2mm] (5.1, 1.6) -- (5.1, 4.1) -- (4.9, 4.1) -- (4.9, 1.6);

\draw [line width = 0.2mm] (5.9, 0.9) -- (5.9, 0.4) -- (6.1, 0.4) -- (6.1, 0.9);
\draw [line width = 0.2mm] (6.1, 1.6) -- (6.1, 4.1) -- (5.9, 4.1) -- (5.9, 1.6);

\node [draw=none, above] at (0,4.05) {$B$};
\node [draw=none, above] at (1,4.05) {$B'$};
\node [draw=none, above] at (5,4.05) {$A$};
\node [draw=none, above] at (6,4.05) {$A'$};

%Draw and label braces for A*, B*
\draw [line width = 0.2mm, decorate, decoration={brace,amplitude=8pt}, yshift=13pt] (-0.2, 4.05) -- (1.2,4.05) node [midway, yshift=16pt] {$B^*$};
\draw [line width = 0.2mm, decorate, decoration={brace,amplitude=8pt}, yshift=13pt] (4.8, 4.05) -- (6.2,4.05) node [midway, yshift=16pt] {$A^*$};

\draw (0,0.5) to [out=180, in=180] (0, 0); %Draw and label P_1, ..., P_t
\draw (0,2) to [out=180, in=180] (0, -0.75);
\draw (0,3) to [out=180, in=180] (0, -1.1);
\draw (0,4) to [out=180, in=180] (0, -1.45);

\draw [->-] (0, 0) -- (2.80, 0);
\draw [->-] (0, -0.75) -- (2.80, -0.75);
\draw [->-] (0, -1.1) -- (2.80, -1.1);
\draw [->-] (0, -1.45) -- (2.80, -1.45);

\node[draw=none] at (3,0) {$P_t$};
\node[draw=none, font=\Huge] at (3, -0.25) {$\vdots$};
\node[draw=none] at (3,-0.75) {$P_3$};
\node[draw=none] at (3,-1.1) {$P_2$};
\node[draw=none] at (3,-1.45) {$P_1$};

\draw [->-] (3.2, 0) -- (6, 0);
\draw [->-] (3.2, -0.75) -- (6, -0.75);
\draw [->-] (3.2, -1.1) -- (6, -1.1);
\draw [->-] (3.2, -1.45) -- (6, -1.45);

\draw (6,0) to [out=0, in=0] (6, 0.5);
\draw (6,-0.75) to [out=0, in=0] (6, 2);
\draw (6,-1.1) to [out=0, in=0] (6, 3);
\draw (6,-1.45) to [out=0, in=0] (6, 4);

\node [draw=none, above left] at (-0, 4) {$b_1$}; %Label b_is
\node [draw=none, above left] at (-0, 3) {$b_2$};
\node [draw=none, above left] at (-0, 2) {$b_3$};
\node [draw=none, above left] at (-0, 0.5) {$b_t$};

\node [draw=none, right] at (1, 4) {$b_1'$}; %Label b_i's
\node [draw=none, right] at (1, 3) {$b_2'$}; 
\node [draw=none, right] at (1, 2) {$b_3'$}; 
\node [draw=none, right] at (1, 0.5) {$b_t'$}; 

\node [draw=none, left] at (5, 4) {$a_1$}; %Label a_is
\node [draw=none, left] at (5, 3) {$a_2$};
\node [draw=none, left] at (5, 2) {$a_3$};
\node [draw=none, left] at (5, 0.5) {$a_t$};

\node [draw=none, above right] at (6, 4) {$a_1'$}; %Label a_i's
\node [draw=none, above right] at (6, 3) {$a_2'$};
\node [draw=none, above right] at (6, 2) {$a_3'$};
\node [draw=none, above right] at (6, 0.5) {$a_t'$};

\node[draw=none, font=\Huge] at (0.5,1.35) {$\vdots$}; %Add ellipses
\node[draw=none, font=\Huge] at (3,1.35) {$\vdots$};
\node[draw=none, font=\Huge] at (5.5,1.35) {$\vdots$};

%Draw endpoints of A_1, ..., A_t, B_1, ..., B_t
\node [circle, inner sep = 1.5pt, outer sep = 0pt, fill = black] at (0, 0.5) {};
\node [circle, inner sep = 1.5pt, outer sep = 0pt, fill = black] at (0, 2) {};
\node [circle, inner sep = 1.5pt, outer sep = 0pt, fill = black] at (0, 3) {};
\node [circle, inner sep = 1.5pt, outer sep = 0pt, fill = black] at (0, 4) {};
\node [circle, inner sep = 1.5pt, outer sep = 0pt, fill = black] at (1, 0.5) {};
\node [circle, inner sep = 1.5pt, outer sep = 0pt, fill = black] at (1, 2) {};
\node [circle, inner sep = 1.5pt, outer sep = 0pt, fill = black] at (1, 3) {};
\node [circle, inner sep = 1.5pt, outer sep = 0pt, fill = black] at (1, 4) {};
\node [circle, inner sep = 1.5pt, outer sep = 0pt, fill = black] at (5, 0.5) {};
\node [circle, inner sep = 1.5pt, outer sep = 0pt, fill = black] at (5, 2) {};
\node [circle, inner sep = 1.5pt, outer sep = 0pt, fill = black] at (5, 3) {};
\node [circle, inner sep = 1.5pt, outer sep = 0pt, fill = black] at (5, 4) {};
\node [circle, inner sep = 1.5pt, outer sep = 0pt, fill = black] at (6, 0.5) {};
\node [circle, inner sep = 1.5pt, outer sep = 0pt, fill = black] at (6, 2) {};
\node [circle, inner sep = 1.5pt, outer sep = 0pt, fill = black] at (6, 3) {};
\node [circle, inner sep = 1.5pt, outer sep = 0pt, fill = black] at (6, 4) {};

%Draw and label E_B
\draw [dashed] (3.3, 1.6) -- (3.3, 4.3) to [out=90, in=0] (3.1, 4.5) -- (2, 4.5) to [out=180, in=90] (1.8, 4.3) -- (1.8, 2.6) to [out=270, in=180] (2, 2.4)
	-- (2.5, 2.4) to [out=0, in=90] (2.7, 2.2) -- (2.7, 1.6);
\draw [dashed] (2.7, 0.9) -- (2.7, 0.5) to [out=270, in=180] (2.9, 0.3) -- (3.1, 0.3) to [out=0, in=270] (3.3, 0.5) -- (3.3, 0.9);
\node [draw=none, above] at (2.55, 4.5) {$E_B$};

%Draw and label E_A
\draw [dotted] (4, 1.6) -- (4, 3.3) to [out=90, in=0] (3.8, 3.5) -- (2.2, 3.5) to [out=180, in=90] (2, 3.3) -- (2, 2.8) to [out=270, in=180] (2.2, 2.7)
	-- (3.3, 2.7) to [out=0, in=90] (3.5, 2.5) -- (3.5, 1.6);
\draw [dotted] (3.5, 0.9) -- (3.5, 0.5) to [out=270, in=180] (3.7, 0.3) -- (3.8, 0.3) to [out=0, in=270] (4, 0.5) -- (4, 0.9);
\node [draw=none, above right] at (3.8, 3.4) {$E_A$};

\end{tikzpicture}
\caption{\footnotesize Our linked domination structure and path cover at the beginning of the proof of Lemma~\ref{lem:good-embed}.}
\label{fig:mainfig}
\end{figure}
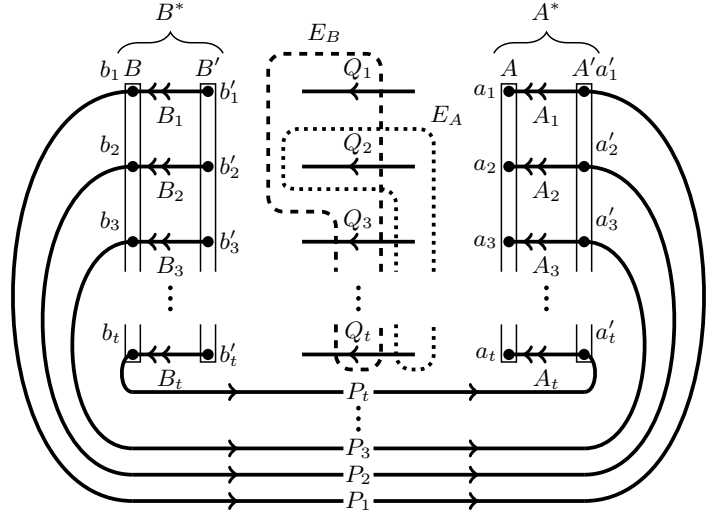

\medskip

\noindent
\textbf{Claim.} \emph{There exists an oriented graph $T''$ with $T'\subseteq T''\subseteq T[V(T')\cup A\cup B^\prime]$ and
a path cover $\mathcal{Q}$ of $T''$ such that the following properties hold:
\begin{enumerate}
\item [(Q1)]$F\subseteq E(\mathcal{Q})$.
\item [(Q2)]$t(\mathcal{Q})\cap E_A=\emptyset$.
\item [(Q3)]$h(\mathcal{Q})\cap E_B=\emptyset$.
\item [(Q4)]$|\mathcal{Q}\cap\mathcal{P}_{2}|\ge|\mathcal{Q}_{1}|-20k$.
\item [(Q5)]If $a_i$ or $b'_i$ is in $V(\mathcal{Q})$, then $P_i\notin \mathcal{Q}$.
\item [(Q6)]$|\mathcal{Q}|\le|\mathcal{Q}_{1}|+124k$.
\item [(Q7)]No paths in $\mathcal{Q}\setminus\mathcal{P}_{2}$ have endpoints
in $A^{*}\cup B^{*}$.\COMMENT{I've renumbered the old (Q7) to the new (Q5). -JL}
\end{enumerate}}

\smallskip

We will prove the claim by applying Lemmas~\ref{lem:pathextend-out} and \ref{lem:pathextend-in} repeatedly to improve
our current path cover. More precisely, we will construct path covers $\mathcal{Q}_2,\dots,\mathcal{Q}_6$ such that eventually
$\mathcal{Q}_6$ satisfies (Q1)--(Q7). So we can take $\mathcal{Q}:=\mathcal{Q}_6$.

In order to be able to apply Lemmas~\ref{lem:pathextend-out} and \ref{lem:pathextend-in}, we must first bound the degrees
of the vertices in $T'$ from below. For all $v\in V(T')$, we have
\begin{align}\label{eq:d+bound-1}
d_{T'}^{+}(v) \ge d_{T}^{+}(v)-|A^{*}\cup B^{*}|\stackrel{{\rm (i)}, {\rm (iii)}}{\ge} d_{T}^{+}(v)-2ct
\ge d_{T}^{+}(v)-\frac{\delta^{0}(T)}{5}
\ge \frac{4}{5}d_{T}^{+}(v).
\end{align}
Similarly,
\begin{align}
d_{T'}^{-}(v) & \ge\frac{4}{5}d_{T}^{-}(v)\label{eq:d-bound-1}
\end{align}
for all $v\in V(T')$.%
   \COMMENT{This argument needs pictures *really* badly! I'd suggest one for the construction of $\mathcal{Q}$
from $\mathcal{Q}_1$, and another for the construction of $C$ from $\mathcal{Q}$.
DO: one large one towards the beginning is probably enough...}

We will first extend the tails of paths in $\mathcal{Q}_1$ out of $E_A$.
We do this by applying Lemma~\ref{lem:pathextend-in} to $T'$ and $\mathcal{Q}_{1}=\mathcal{P}_{1}\dot\cup\mathcal{P}_{2}$
with $I:=E_A\setminus X$, $J:=X\cup A^\prime\cup B$ 
to form a new path cover $\mathcal{Q}_{2}$ of $T'$ which will satisfy~(Q1) and~(Q2).
By conditions (ii) and (v),  no paths in $\mathcal{P}_{2}$
have endpoints in $I$. By condition (vi),  $F\subseteq E(\mathcal{Q}_{1})$. 
Moreover, 
\begin{eqnarray}\label{eq:help}
3(|I|+|J|)+2|F| & \le & 3|E_A|+3|X|+3|A^\prime|+3|B|+2|F|\nonumber\\
 & \stackrel{{\rm (ii)}, {\rm (vii)}, {\rm (vi)}}{\le} & \frac{3}{40}d^{-}+6kt+6t+4ct < \frac{4}{5}d^-.
\end{eqnarray}
In the final inequality we used the fact that $d^{-}\ge\delta^{0}(T)\ge Ck^{2}$.
Thus for all $v\in I$ we have
\[
d_{T'}^{-}(v)\stackrel{(\ref{eq:d-bound-1})}{\ge} \frac{4}{5} d_{T}^{-}(v)\stackrel{\rm (ii)}{\ge} \frac{4}{5} d^-
\stackrel{(\ref{eq:help})}{>} 3(|I|+|J|)+2|F|.
\]
Thus the requirements of Lemma~\ref{lem:pathextend-in} are satisfied, and we can apply the lemma to obtain a path cover $\mathcal{Q}_{2}$
of $T'$. 

Lemma~\ref{lem:pathextend-in}(iv) implies that
$\mathcal{Q}_{2}$ satisfies (Q1). Moreover, Lemma~\ref{lem:pathextend-in}(v),(vi) imply that
\begin{eqnarray} \label{Q12eq}
|\mathcal{Q}_{2}|\le |\mathcal{Q}_{1}|+4k \ \ \ &\mbox{as well as }& \ \ \ |\mathcal{Q}_{2}\cap\mathcal{P}_{2}| \ge |\mathcal{P}_{2}|-4k \geq |\mathcal{Q}_{1}|-8k, \\
\ \ \ &\mbox { and thus }& \ \ \ |\mathcal{Q}_2 \setminus \mathcal{P}_2| \le 12k, \nonumber
\end{eqnarray}
where we have used that $|\mathcal{Q}_{1}| = |\mathcal{P}_{1}| + |\mathcal{P}_{2}| \leq |\mathcal{P}_{2}| + 4k$%
	\COMMENT{VP: Need this small extra step to correct the error.}
 for the second inequality above.
Recall from condition~(vii) that
$X\subseteq V(\mathcal{P}_{2})$ and $X\cap(A^{*}\cup B^{*})=\emptyset$. Thus no paths in $\mathcal{Q}_{1}$
have endpoints in $X$. Moreover, since $t(\mathcal{P}_2)=B$ and $h(\mathcal{P}_2)=A^\prime$, no paths in $\mathcal{Q}_{1}$
have tails in $A^\prime$ or heads in $B$. Together with Lemma~\ref{lem:pathextend-in}(i)--(iii) this implies that
$\mathcal{Q}_{2}$ satisfies (Q2) and
\begin{itemize}
\item[(a$_1$)] $t(\mathcal{Q}_2) \cap A' = h(\mathcal{Q}_2) \cap B = \emptyset$.
\item[(a$_2$)] $h(\mathcal{Q}_2)\cap X=\emptyset$.
\end{itemize}

We will now extend the heads of paths in $\mathcal{Q}_2$ out of $E_B$. 
We do this by applying Lemma~\ref{lem:pathextend-out} to $T'$, $(\mathcal{Q}_{2}\setminus\mathcal{P}_{2})\dot\cup
(\mathcal{Q}_{2}\cap\mathcal{P}_{2})$ with $I:=E_B\setminus X$,
$J:=(E_A\setminus E_B)\cup X\cup A^\prime\cup B$  to form a new path
cover $\mathcal{Q}_{3}$ of $T'$ which will satisfy (Q1)--(Q4). As before, no paths in $\mathcal{P}_{2} \supseteq \mathcal{Q}_{2} \cap \mathcal{P}_{2}$ have endpoints in $I$,
and $F\subseteq E(\mathcal{Q}_{2})$ by (Q1) for $\mathcal{Q}_{2}$.
Moreover, similarly as in~(\ref{eq:help}) we obtain
\begin{align*}
3(|I|+|J|)+2|F| & \le 3|E_B|+3|E_A|+3|X|+3|A^\prime|+3|B|+2|F|\\
 & \le \frac{3}{40}d^{+}+\frac{3}{40}d^{-}+6kt+6t+4ct < \frac{4}{5}d^+.
\end{align*}
(In the final inequality we used our assumption that $d^-\le d^+$.)
Together with (\ref{eq:d+bound-1}) this implies that $d_{T'}^{+}(v) \ge 4d^{+}/5 > 3(|I|+|J|)+2|F|$
for all $v\in I$. Thus the requirements of Lemma~\ref{lem:pathextend-out}
are satisfied, and we can apply the lemma to obtain a path cover $\mathcal{Q}_{3}$
of $T'$. 

By Lemma~\ref{lem:pathextend-out}(iv), $\mathcal{Q}_{3}$ satisfies (Q1). Lemma~\ref{lem:pathextend-out}(v) implies that
\begin{equation}\label{sizeQ3}
|\mathcal{Q}_{3}|\le|\mathcal{Q}_{2}|+|\mathcal{Q}_{2}\setminus \mathcal{P}_{2}|
\stackrel{(\ref{Q12eq})}{\le} |\mathcal{Q}_{2}|+12k\stackrel{(\ref{Q12eq})}{\le} |\mathcal{Q}_{1}|+16k.
\end{equation} 
Similarly, Lemma~\ref{lem:pathextend-out}(vi) implies that
\begin{equation} \label{Q23eq}
|\mathcal{Q}_{3}\cap \mathcal{P}_2|\ge |\mathcal{Q}_{2}\cap \mathcal{P}_2|-|\mathcal{Q}_{2}\setminus \mathcal{P}_2|
\stackrel{(\ref{Q12eq})}{\ge} |\mathcal{Q}_{1}|-20k.
\end{equation}
So $\mathcal{Q}_{3}$ satisfies (Q4).
Lemma~\ref{lem:pathextend-out}(iii) and (Q2) for $\mathcal{Q}_2$ together imply that $\mathcal{Q}_{3}$ satisfies (Q2).
Moreover, (a$_2$) and Lemma~\ref{lem:pathextend-out}(i),(ii) together imply that
no path in $\mathcal{Q}_{3}$ has its head in $(E_B\setminus X)\cup X\supseteq E_B$
and so $\mathcal{Q}_{3}$ satisfies (Q3). Finally, (a$_1$) and
Lemma~\ref{lem:pathextend-out}(ii),(iii) together imply that
\begin{itemize}
\item[(b$_1$)] no paths in $\mathcal{Q}_{3}$ have tails in $A^\prime$ or heads in $B$.
\end{itemize}

We will now extend the paths in $\mathcal{Q}_3 \setminus \mathcal{P}_2$ so that their endpoints lie in $A\cup B'$ rather than $A'\cup B$. More precisely,
if $P\in\mathcal{Q}_{3}\setminus\mathcal{P}_{2}$
has head $a_{i}'\in A^\prime$, then we replace $P$ by $Pa_{i}'a_{i}$
(recall that $a_{i}'a_{i}\in E(T)$ by condition (i) and $a_i\in A\subseteq V(T)\setminus V(\mathcal{Q}_{3})$
by the definition of $N$). If $P\in\mathcal{Q}_{3}\setminus\mathcal{P}_{2}$ has tail $b_{i}\in B$,
we replace $P$ by $b_{i}'b_{i}P$ (recall that $b_{i}'b_{i}\in E(T)$ by condition (iii) and $b_{i}'\in B^\prime\subseteq V(T)\setminus V(\mathcal{Q}_{3})$).
Let $\mathcal{Q}_{4}$ be the path system thus obtained from $\mathcal{Q}_{3}$. 
Let $T'':=T[V(\mathcal{Q}_{4})]$. Then
\[
T'\subseteq T''\subseteq T[V(T')\cup A\cup B^\prime].
\]
and $\mathcal{Q}_{4}$ is a path cover of $T''$ satisfying (Q1)--(Q4) and such that 
\begin{equation}\label{eq:Q4}
|\mathcal{Q}_{4}|=|\mathcal{Q}_{3}| \ \ \ \ \ \ \ \ \text{and} \ \ \ \ \ \ \ \ \mathcal{Q}_{4}\cap \mathcal{P}_2=\mathcal{Q}_{3}\cap \mathcal{P}_2.
\end{equation}
Moreover,
$h(\mathcal{Q}_{4}\setminus\mathcal{P}_{2})\cap A^\prime=\emptyset$ and $t(\mathcal{Q}_{4}\setminus\mathcal{P}_{2})\cap B=\emptyset$.
Together with (b$_1$) this implies that
\begin{itemize}
\item[(c$_1$)] no paths in $\mathcal{Q}_{4}\setminus\mathcal{P}_{2}$ have endpoints in $A^\prime\cup B$.
\end{itemize}
Moreover, by construction of $\mathcal{Q}_{4}$, every vertex $a_i\in V(\mathcal{Q}_{4})\cap A$ is a head of some path
$P\in \mathcal{Q}_{4}\setminus\mathcal{P}_{2}$ and this path $P$ also contains $a'_i$ (so in particular $P_i\notin \mathcal{Q}_{4}\cap \mathcal{P}_{2}$).
Similarly, every vertex in $b'_i\in V(\mathcal{Q}_{4})\cap B^\prime$ is a tail of some path
$P\in \mathcal{Q}_{4}\setminus\mathcal{P}_{2}$ and this path $P$ also contains $b_i$ (in particular $P_i\notin \mathcal{Q}_{4}\cap \mathcal{P}_{2}$).
Thus (Q5) as well as the following assertion hold:
\begin{itemize}
\item[(c$_2$)] no paths in $\mathcal{Q}_{4}$ have heads in $B^\prime$ or tails in $A$.
\end{itemize}

We will now extend the tails of paths in $\mathcal{Q}_4 \setminus \mathcal{P}_2$ out of $A^* \cup B^*$. We do this by applying
the strengthened form of Lemma~\ref{lem:pathextend-in} to $T''$, $(\mathcal{Q}_{4}\setminus\mathcal{P}_{2})\dot\cup (\mathcal{Q}_{4}\cap\mathcal{P}_{2})$
with $I:=B^\prime$, $J:=E_A\cup E_B\cup A^\prime\cup A\cup B$
to form a new path cover $\mathcal{Q}_{5}$ of $T''$ which still satisfies (Q1)--(Q5), and such
that no path in $\mathcal{Q}_{5}\setminus\mathcal{P}_{2}$ has endpoints in
$A^\prime\cup B^\prime\cup B$. Clearly no paths in $\mathcal{P}_{2}\supseteq \mathcal{Q}_{4}\cap\mathcal{P}_{2}$
have tails in $I$, and $F\subseteq E(\mathcal{Q}_{4})$ by (Q1).
By condition (iii) we have $d_{T}^{-}(v)\ge n/3$ for all $v\in I$. Together with
(\ref{eq:d-bound-1}) this implies that $d_{T''}^{-}(v)\ge d_{T'}^{-}(v)\ge n/4$ for
all $v\in I$. Note also that $|V(\mathcal{P}_2)| \le n/20$ by condition~(v). So similarly as in~(\ref{eq:help}), it follows that
\begin{align*}
 3(|I|+|J|) & +2|F|+|V(\mathcal{Q}_{4}\cap\mathcal{P}_{2})|\nonumber\\
 & \le  3(|A^\prime|+|A|+|B^\prime|+|B|+|E_A|+|E_B|)+2|F|+|V(\mathcal{P}_{2})|\nonumber\\
 & \le 12t+\frac{3}{20}d^{+}+4ct+\frac{n}{20} < \frac{n}{4}\le d_{T''}^{-}(v)
\end{align*}
for all $v\in I$. Thus the requirements of the strengthened form of Lemma~\ref{lem:pathextend-in}
are satisfied, and we can apply the lemma to obtain a path cover $\mathcal{Q}_{5}$ of $T''$ such that
$\mathcal{Q}_{5}\cap \mathcal{P}_{2} \supseteq \mathcal{Q}_{4}\cap \mathcal{P}_{2}$. 
Note that Lemma~\ref{lem:pathextend-in}(ii),(iii) imply that the endpoints of $\mathcal{Q}_{5}\setminus (\mathcal{P}_{2} \cap \mathcal{Q}_{4})$
in $J$ are the same as those of $\mathcal{Q}_{4}\setminus \mathcal{P}_{2}$. Together with (c$_1$) this implies that
no paths in $\mathcal{Q}_{5}\setminus (\mathcal{P}_{2} \cap \mathcal{Q}_{4})$
have endpoints in $A^\prime\cup B$.
In particular, this means that $\mathcal{Q}_{5}\cap \mathcal{P}_{2} = \mathcal{Q}_{4}\cap \mathcal{P}_{2}$ and so
\begin{itemize}
\item[(d$_1$)] no paths in $\mathcal{Q}_{5}\setminus \mathcal{P}_{2}$
have endpoints in $A^\prime\cup B$.
\end{itemize}
Thus (Q5) for $\mathcal{Q}_{4}$ implies
that $\mathcal{Q}_{5}$ satisfies (Q5) as well.
Lemma~\ref{lem:pathextend-in}(ii)--(iv), (vi) (strengthened)%
	\COMMENT{VP: I added (vi) (strengthened) because I think we need it to show (Q4) for $\mathcal{Q}_{5}$.}
 and (Q1)--(Q4) for $\mathcal{Q}_4$ together imply that $\mathcal{Q}_{5}$ satisfies (Q1)--(Q4).
Moreover, Lemma~\ref{lem:pathextend-in}(v) implies that
\begin{eqnarray}
|\mathcal{Q}_{5}| & \le & |\mathcal{Q}_{4}|+|\mathcal{Q}_{4}\setminus \mathcal{P}_2|
\stackrel{(\ref{eq:Q4})}{=}|\mathcal{Q}_{3}|+|\mathcal{Q}_{3}\setminus \mathcal{P}_2|
=2|\mathcal{Q}_{3}|-|\mathcal{Q}_{3} \cap \mathcal{P}_2| \nonumber\\
& \stackrel{(\ref{sizeQ3}),(\ref{Q23eq}) }{\le} & |\mathcal{Q}_{1}|+52k.\label{eq:help3}
\end{eqnarray}
By Lemma~\ref{lem:pathextend-in}(i),(ii) and (c$_2$), we can also strengthen (d$_1$) to
\begin{itemize}
\item[(d$_2$)] no paths in $\mathcal{Q}_{5}\setminus \mathcal{P}_{2}$
have endpoints in $A^\prime\cup B^\prime\cup B$ and no paths in $\mathcal{Q}_{5}$ have tails in~$A$.
\end{itemize}

Finally, we will extend the heads of paths in $\mathcal{Q}_5 \setminus \mathcal{P}_2$ out of $A^* \cup B^*$.
We do this by applying the strengthened form of Lemma~\ref{lem:pathextend-out}
to $T''$, $(\mathcal{Q}_{5}\setminus\mathcal{P}_{2})\dot\cup(\mathcal{Q}_{5}\cap\mathcal{P}_{2})$
with $I:=A$, $J:=E_A\cup E_B\cup A^\prime\cup B^\prime\cup B$
to form a new path cover $\mathcal{Q}_{6}$ of $T''$ which
will satisfy (Q1)--(Q7). Clearly no paths in $\mathcal{P}_{2}\supseteq \mathcal{Q}_{5}\cap\mathcal{P}_{2}$
have heads in $I$, and $F\subseteq E(\mathcal{Q}_{5})$ by (Q1).
Similarly as before, condition (i) and (\ref{eq:d+bound-1})
together imply that
\begin{eqnarray*}
 3(|I|+|J|)+2|F|+|V(\mathcal{Q}_{5})\cap\mathcal{P}_{2}|
<  \frac{n}{4}\le d_{T''}^{+}(v)
\end{eqnarray*}
for all $v\in I$. Thus the requirements of the strengthened form of Lemma~\ref{lem:pathextend-out}
are satisfied, and we can apply the lemma to obtain a path cover $\mathcal{Q}_{6}$
of $T''$ such that $\mathcal{Q}_{6}\cap \mathcal{P}_{2}=\mathcal{Q}_{5}\cap \mathcal{P}_{2}$.
(The fact that we have equality follows using a similar argument as in (d$_1$) above.)

Thus (Q5) for $\mathcal{Q}_{5}$ implies that $\mathcal{Q}_{6}$ satisfies (Q5) as well.
Lemma~\ref{lem:pathextend-out}(ii)--(iv), (vi) (strengthened)%
	\COMMENT{VP: Same as above.}
 and (Q1)--(Q4) for $\mathcal{Q}_{5}$ together imply that $\mathcal{Q}_{6}$ satisfies (Q1)--(Q4).
Also, by Lemma~\ref{lem:pathextend-out}(v)  we have
$$
|\mathcal{Q}_{6}| \le |\mathcal{Q}_{5}|+|\mathcal{Q}_{5}\setminus \mathcal{P}_2|
=2|\mathcal{Q}_{5}|-|\mathcal{Q}_{5} \cap \mathcal{P}_2|
\stackrel{({\rm Q4}),(\ref{eq:help3})}{\le } |\mathcal{Q}_{1}|+124k.
$$
So~(Q6) holds. Moreover, by Lemma~\ref{lem:pathextend-out}(i)--(iii), (d$_2$) and the fact that
$\mathcal{Q}_{6}\cap \mathcal{P}_{2}=\mathcal{Q}_{5}\cap \mathcal{P}_{2}$, no paths in
$\mathcal{Q}_{6}\setminus \mathcal{P}_2$ have endpoints in $A^\prime\cup A\cup B^\prime\cup B$.
Since no vertex in $(A^*\cup B^*)\setminus (A^\prime\cup A\cup B^\prime\cup B)$ lies in $V(T'')=V(\mathcal{Q}_{6})$,
this in turn implies (Q7). So the path system $\mathcal{Q}:=\mathcal{Q}_{6}$ is as required in the claim.

\medskip

We will now use the fact that each $A_i$ and each $B_i$ is an almost dominating set in order to extend the paths
in $\mathcal{Q}\setminus\mathcal{P}_{2}$ into those $A_{i}$ and $B_{i}$ which contain the endpoints of paths in $\mathcal{Q}\cap\mathcal{P}_{2}$.
We then use the paths in $\mathcal{Q}\cap\mathcal{P}_{2}$ to join these extended paths
into a long cycle $C$ covering (at least) $N$, and
with $F\subseteq E(C)$. Finally, we will deploy whatever covering
edges we need from $F$ in order to absorb any vertices in $A^{*}\cup B^{*}$
not already covered into $C$. 

Let $\mathcal{R}:=\mathcal{Q}\setminus\mathcal{P}_{2}$ and $\mathcal{S}:=\mathcal{Q}\cap\mathcal{P}_{2}$.
In order to carry out the steps above, we would like to have $|\mathcal{R}| = |\mathcal{S}|$ 
to avoid having any paths in $\mathcal{S}$ left over. So we first split 
the paths in $\mathcal{R}$ until we have exactly $|\mathcal{S}|$
of them. In this process, we wish to preserve (Q1)--(Q3), (Q5) and (Q7).
To show that this can be done, first note that by (Q4) and (Q6), we have
\[
|\mathcal{R}|=|\mathcal{Q}\setminus\mathcal{P}_{2}|\le 144k=t-20k\le |\mathcal{Q}_1|-20k \le|\mathcal{Q}\cap\mathcal{P}_{2}|=|\mathcal{S}|.\label{eq:R=S}
\]
The number of edges in $\mathcal{R}$ which are incident to vertices in $E_A\cup E_B\cup A^{*}\cup B^{*}$,
or which belong to $F$, is bounded above by
\[
2(|E_A|+|E_B|+|A^{*}|+|B^{*}|)+|F|\le\frac{d^{+}}{10}+6ct\le\frac{n}{4}.
\]
On the other hand,
\begin{align*}
|E(\mathcal{R})| & =|V(\mathcal{R})|-|\mathcal{R}|\ge(n-|A^{*}\cup B^{*}|-|V(\mathcal{P}_2)|)-144k\\
& \ge n-2ct-\frac{n}{20}-144k\ge\frac{n}{2}.
\end{align*}
Hence
\[
|E(\mathcal{R})|-2(|E_A|+|E_B|+|A^{*}|+|B^{*}|)-|F|\ge\frac{n}{4}>t\ge|\mathcal{S}|.
\]
We may therefore form a path cover $\mathcal{R}'$ of $T[V(\mathcal{R})]$ with $|\mathcal{R}'|=|\mathcal{S}|$
by greedily removing edges of paths in $\mathcal{R}$ which are neither
incident to $A^{*}\cup B^{*}\cup E_A\cup E_B$ nor elements of~$F$. Then $\mathcal{R}'\cup\mathcal{S}$ satisfies (Q1)--(Q3), (Q5) and (Q7).%
    \COMMENT{$\mathcal{R}'\cap \mathcal{P}_2=\emptyset$ since each path in $\mathcal{R}'$ has at least one endpoint outside $A^*\cup B^*$.}

Next, we extend the paths in $\mathcal{R}'$ into $A^{*}\cup B^{*}$
and join them with the paths in $\mathcal{S}$ to form a long cycle
$C$. By relabeling the $P_i$ if necessary, we may assume that $\mathcal{S}=\{P_1,\dots,P_\ell\}$.
Let $R_1,\dots,R_\ell$ denote the paths in $\mathcal{R}'$ and for each $j\in [\ell]$ let $x_{j}$ be the tail of $R_{j}$
and $y_{j}$  the head of $R_{j}$. Recall from (Q2) and (Q7) that $x_{j}\notin A^{*}\cup B^{*}\cup E_A$.
Hence by condition (ii) there exists $x_{j}'\in A_{j-1}$ with $x_{j}'x_{j}\in E(T)$, where the indices are understood to be modulo~$\ell$.
Similarly $y_{j}\notin A^{*}\cup B^{*}\cup E_B$ by (Q3) and (Q7), so by condition (iv) there exists $y_{j}'\in B_{j}$
with $y_{j}y_{j}'\in E(T)$. Let $R'_j:=x'_{j}x_{j}R_{j}y_{j}y'_{j}$.
If $x_{j}'\ne a_{j-1}'$, then we extend $R'_j$ by adding the edge $a_{j-1}'x'_{j}$.
Similarly, if $y_{j}'\ne b_{j}$ we extend $R'_j$ by adding the edge $y'_{j}b_{j}$.
In all cases, we still denote the resulting path from $a_{j-1}'$ to $b_{j}$ by $R'_j$. 

Recall that $P_j$ is a path from $b_j$ to $a'_j$ for all $j\in [\ell]$.
Moreover, we have $x'_j, y'_j\notin V(\mathcal{Q}\setminus \mathcal{P}_2)=V(\mathcal{R}')$ for all $j\in [\ell]$.
(Indeed, if $x'_j\neq a_j$ this follows since for the oriented graph $T''$ defined in the claim we have $V(T'') \cap A_i  \subseteq \{a_i,a'_i \}$. 
If $x'_j = a_j$, this follows since $P_j \in \mathcal{Q}$ and so (Q5) implies that $a_j \notin V(\mathcal{Q})$.
The argument for $y'_j$ is similar.)
Thus $R_1',\dots,R_\ell'$ are pairwise vertex-disjoint and internally disjoint from the paths in $\mathcal{S}$.
So we can define a cycle $C$ by
\[
C:=R_{1}'P_{1}R_{2}'P_{2}\dots P_{\ell-1}R_{\ell}'P_{\ell}.
\]
Note that $N\subseteq V(C)$ since $\mathcal{R}'\cup\mathcal{S}$
is a path cover of $T''$, and $F\subseteq E(C)$ by (Q1).
Recall from condition (vi) that $F$ consists of covering edges $e_v$
for all $v\in A^{*}\cup B^{*}$ and that these $e_v$ are pairwise distinct.
Thus each $e_v$ lies on $C$ and so neither of the two activating edges of $e_v$ can lie on~$C$. Writing
$e_v = x_vy_v$, it follows from
these observations that we may form a new cycle $C'$ by replacing $x_{v}y_{v}$
by $x_{v}vy_{v}$ in $C$ for all $v\in(A^{*}\cup B^{*})\setminus V(C)$.
Then $C'$ is a Hamilton cycle of~$T$, as desired.
\end{proof}

%%%%%%%%%%%%%%%%%%%%%%%%%%%%%%%%%%%%%%%%%%%%%%%%%%%%%%%%%%%%%%%%%%%%%%%%%%%%%%%%%%%%%%%%%%%%%%%%%%

\section{Finding many edge-disjoint Hamilton cycles in a good tournament\label{sec:manycycles}}

In the proof of Theorem~\ref{thm:mainresult}, we will find the edge-disjoint Hamilton cycles in a given highly-linked
tournament by repeatedly applying Lemma~\ref{lem:mainengine}. In each application, we will need to set up all the dominating sets and paths required
by Lemma~\ref{lem:mainengine}. The following definition encapsulates this idea. (Recall that $\Int(P)$ denotes the interior of a path~$P$.)
\begin{defn}
\label{def:good}
We say that a tournament $T$ is \emph{$(C,k,t,c)$-good}
if it contains vertex sets $A_{1}^{1},\dots,A_{k}^{t}$, $B_{1}^{1},\dots,B_{k}^{t}$, 
$E_{A,1},\dots,E_{A,k}$, $E_{B,1},\dots,E_{B,k}$,
edge sets $F_{1},\dots,F_{k}$, and paths $P_{1}^{1},\dots,P_{k}^{t}$
such that the following statements hold, where $A_{i}^{*}:=A_{i}^{1}\cup\dots\cup A_{i}^{t}$,
$A^{*}:=A_{1}^{*}\cup\dots\cup A_{k}^{*}$, $B_{i}^{*}:=B_{i}^{1}\cup\dots\cup B_{i}^{t}$,
and $B^{*}:=B_{1}^{*}\cup\dots\cup B_{k}^{*}$:
\end{defn}
\begin{enumerate}
\item [(G1)]The sets $A_{1}^{1},\dots,A_{k}^{t}$ are disjoint and $2\le |A_{i}^{\ell}|\le c$ for all $i\in [k]$ and $\ell\in [t]$.
Moreover, each $T[A_{i}^{\ell}]$ is a transitive tournament whose head has out-degree at least $2n/5$ in $T$.
Write $A:=\{h(T[A_{i}^{\ell}]):i\in[k],\ell\in[t]\}$.
\item [(G2)]The sets $B_{1}^{1},\dots,B_{k}^{t}$ are disjoint from each other and 
from $A^*$, and $2\le |B_{i}^{\ell}|\le c$ for all $i\in [k]$ and $\ell\in [t]$. Moreover, each $T[B_{i}^{\ell}]$ is a transitive
tournament whose tail has in-degree at least $2n/5$ in $T$. Write $B':=\{t(T[B_{i}^{\ell}]):i\in[k],\ell\in[t]\}$.
\item [(G3)]Write $d_{-}:=\min\{d^{-}(v):v\in V(T)\setminus(A\cup B')\}$.
Each $A_{i}^{\ell}$ out-dominates $V(T)\setminus (A^{*}\cup B^{*}\cup E_{A,i})$.
Moreover, $|E_{A,i}|\le d_{-}/50$ and $E_{A,i}\cap(A_{i}^{*}\cup B_{i}^{*})=\emptyset$
for all $i\in [k]$.%
   \COMMENT{This is not a typo -- we don't require $A_i^\ell$ to out-dominate any of $A^* \cup B^*$, and in general
we can't require it while preserving disjointness.}
\item [(G4)]Write $d_{+}:=\min\{d^{+}(v):v\in V(T)\setminus(A\cup B')\}$.
Each $B_{i}^{\ell}$ in-dominates $V(T)\setminus (A^{*}\cup B^{*}\cup E_{B,i})$.
Moreover, $|E_{B,i}|\le d_{+}/50$ and $E_{B,i}\cap(A_{i}^{*}\cup B_{i}^{*})=\emptyset$
for all $i\in [k]$.
\item [(G5)]Each $P_{i}^{\ell}$ is a path from the head of $T[B_{i}^{\ell}]$ to
the tail of $T[A_{i}^{\ell}]$. For each $i\in [k]$, the paths $P_{i}^{1},\dots,P_{i}^{t}$ are vertex-disjoint
and $|P_{1}^{1}\cup\dots\cup P_{k}^{t}|\le n/20$.
For all $i\neq j$ and all $\ell, m \in [t]$, $P_{i}^{\ell}$ and $P_{j}^{m}$ are edge-disjoint%
\COMMENT{TO DO: Previously also had $V(P_{i}^{\ell})\cap V(P_{j}^{m})\subseteq A\cup B'$. Make sure this really isn't needed. (This should be fine - JL)
Also, we previously only had
$V(\Int(P_{i}^{\ell}))\cap (A^{*}\cup B^{*})\subseteq (A\cup B')$. But I don't see
how we then get that $P_{i}^{1},\dots,P_{i}^{t}$ are internally disjoint from $A_i^*\cup B_i^*$.}
and
$$
V(\Int(P_{i}^{\ell}))\cap (A^{*}\cup B^{*})\subseteq (A\cup B')\setminus (A_i^*\cup B_i^*).
$$   
\item [(G6)]$F_{i}\subseteq E(P_{i}^{t})$ and $(A\cup B')\setminus(A_{i}^{*}\cup B_{i}^{*})\subseteq V(P_{i}^{t})$
for all $i\in [k]$.%
\COMMENT{The last point is important because these vertices are not generally dominated by $A_i^*$ and $B_i^*$,
and they have unusually low in- or out-degrees. We need these to be covered by paths before we start Lemma \ref{lem:mainengine},
or they'd potentially stop us extending out of the exceptional sets.}
\item [(G7)]The set $F_{1}\cup\dots\cup F_{k}$ is a matching in $T-(A^{*}\cup B^{*})$. For all $i\in [k]$
we have $F_i = \{e_v:v \in A^*_i \cup B^*_i\}$, where $e_v$ is a covering edge for $v$ and
$e_v \ne e_{v'}$ whenever $v \ne v'$.
Moreover, for each $i\in [k]$, let $F^{\rm act}_{i}$ be the set of activating edges corresponding to the
covering edges in $F_i$. Then $F^{\rm act}_{i}\cap E(P_{j}^{\ell})=\emptyset$ for all $i,j \in [k]$ and all $\ell\in [t]$.%
   \COMMENT{Previously also had that $F^{\rm act}_{i}\cap F^{\rm act}_{j}=\emptyset$ for all distinct $i,j\in [k]$.
But this follows from the other conditions, so I deleted it.}
\item [(G8)]We have $\delta^{0}(T)\ge Ck^{2}\log k$.
\end{enumerate}
For convenience, we collect the various disjointness conditions of
Definition~\ref{def:good} into a single statement.
\begin{enumerate}
\item [(G9)]
\begin{itemize}
\item The sets $A_{1}^{1},\dots,A_{k}^{t}$, $B_{1}^{1},\dots,B_{k}^{t}$
are disjoint. 
\item $(E_{A,i}\cup E_{B,i})\cap(A_{i}^{*}\cup B_{i}^{*})=\emptyset$
for all $i\in [k]$. 
\item $F_{1}\cup\dots\cup F_{k}$ is a matching in $T-(A^{*}\cup B^{*})$.
\item For each $i\in [k]$, the paths $P_{i}^{1},\dots,P_{i}^{t}$ are vertex-disjoint.
\item For all $i\neq j$ and all $\ell, m\in [t]$, $P_{i}^{\ell}$ and $P_{j}^{m}$ are edge-disjoint
%, $V(P_i^{\ell})\cap V(P_{i'}^{m})\subseteq A\cup B$
and $V(\Int(P_{i}^{\ell}))\cap (A^{*}\cup B^{*})\subseteq (A\cup B')\setminus (A_i^*\cup B_i^*)$. In particular, $P_{i}^{1},\dots,P_{i}^{t}$
are internally disjoint from $A_i^*\cup B_i^*$.%
\COMMENT{I think adding this is worth it -- otherwise we end up with quite long lists in the proof of Lemma \ref{lem:good-embed}.}
\end{itemize}
\end{enumerate}

The next lemma shows that for suitable parameters $C$, $t=t(k)$ and $c=c(k)$, every $(C,k,t,c)$-good tournament contains $k$ edge-disjoint
Hamilton cycles.%
   \COMMENT{The proof works if $t(k)=\Theta(k)$ and $c(k)=\Theta(\log k)$, as long as $Ck^{2}\log k\gg ktc$.}
In the next section we then show that there exists a constant $C'>0$ such that
any $C'k^{2}\log k$-linked tournament is $(C,k,t,c)$-good (see Lemma~\ref{lem:good}).
These two results together immediately imply Theorem~\ref{thm:mainresult}.

As mentioned at the beginning of this section, in order to prove Lemma~\ref{lem:good-embed} we will
apply Lemma~\ref{lem:mainengine} $k$ times. In the notation for Definition~\ref{def:good}, our convention is
that the sets with subscript $i$ will be used in the $i$th application of Lemma~\ref{lem:mainengine}
to find the $i$th Hamilton cycle. 

\begin{lem}
\label{lem:good-embed}
Let $C:=10^{7}$, $k\ge20$, $t:=164k$, $c:=\lceil \log 50t+1\rceil$.
Then any $(C,k,t,c)$-good tournament contains $k$ edge-disjoint
Hamilton cycles.
\end{lem}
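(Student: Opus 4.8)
The plan is to build the Hamilton cycles $H_1,\dots,H_k$ one at a time, each by a single application of Lemma~\ref{lem:mainengine}; in the $i$th application we use the data indexed by $i$ in Definition~\ref{def:good}, noting that the parameters $k$, $t$, $c$ there agree with those in Lemma~\ref{lem:mainengine} and that $10^7 k^2\log k>10^6k^2$. The role of the edge sets $F_i$, $F_i^{\mathrm{act}}$ and of the disjointness conditions in (G5)--(G7), (G9) is that the domination structures of distinct indices barely interact, so round $i$ can be carried out while leaving the structure of all later rounds intact. Concretely, suppose $H_1,\dots,H_{i-1}$ have been found. Before looking for $H_i$ I would pass to the oriented subgraph $T_i\subseteq T$ obtained by deleting (a) all edges of $H_1,\dots,H_{i-1}$; (b) all edges of $T[A_{i'}^{\ell}]$ and $T[B_{i'}^{\ell}]$ with $i'>i$; (c) all edges of the paths $P_{i'}^{\ell}$ with $i'>i$; and (d) all edges in $F_{i'}^{\mathrm{act}}$ with $i'>i$. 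Deletions (b)--(d) ``reserve'' exactly the edges that rounds $>i$ will need, and since the same recipe in rounds $1,\dots,i-1$ also reserved this material, $H_1,\dots,H_{i-1}$ never touch it; hence $T_i$ still contains the transitive tournaments $T[A_i^{\ell}]=T_i[A_i^{\ell}]$ and $T[B_i^{\ell}]=T_i[B_i^{\ell}]$, the paths $P_i^{\ell}$, the covering edges in $F_i\subseteq E(P_i^{t})$, and their activating edges. I would then apply Lemma~\ref{lem:mainengine} to $T_i$ with these round-$i$ sets and with $X:=(A\cup B')\setminus(A_i^{*}\cup B_i^{*})$, take the resulting Hamilton cycle to be $H_i$, and iterate. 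Since $T_i$ omits every edge of $H_1,\dots,H_{i-1}$, the $H_j$ are pairwise edge-disjoint.

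The substance of the proof is verifying the hypotheses of Lemma~\ref{lem:mainengine} for $T_i$. A fixed vertex loses $2$ in total degree to each of the $i-1\le k-1$ earlier Hamilton cycles, lies in at most one reserved set $A_{i'}^{\ell}$ or $B_{i'}^{\ell}$ (losing $\le c-1$ edges there), lies on at most one path $P_{i'}^{\ell}$ for each $i'>i$ (losing $\le 2$ edges there), and is incident to at most two reserved activating edges; so its total degree loss is at most $2(k-1)+(c-1)+2=2k+c-1<4k$, using $c\le\log k+15$ and $k\ge 20$. Hence $\delta(T_i)\ge(n-1)-(2k+c-1)>n-4k$, and likewise $\delta^{0}(T_i)\ge\delta^{0}(T)-(2k+c-1)\ge 10^{7}k^{2}\log k-2k-c\ge 10^{6}k^{2}$ by (G8); moreover $n\ge 2\delta^{0}(T)+1$ is so large relative to $k$ that the heads of the $T_i[A_i^{\ell}]$ and the tails of the $T_i[B_i^{\ell}]$ still have out-, resp.\ in-degree at least $2n/5-(2k+c-1)\ge n/3$ in $T_i$, by (G1), (G2). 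Conditions (i), (iii), (v), (vi), (vii) of Lemma~\ref{lem:mainengine} now follow at once: the transitive tournaments and their degree bounds are as just noted; the sets $A_i^{1},\dots,A_i^{t},B_i^{1},\dots,B_i^{t},X$ are disjoint by (G1), (G2), (G9) and the definition of $X$; $P_i^{1},\dots,P_i^{t}$ are vertex-disjoint, internally disjoint from $A_i^{*}\cup B_i^{*}$, and satisfy $|P_i^{1}\cup\dots\cup P_i^{t}|\le n/20$ by (G5), (G9); $F_i$ is a matching of covering edges in $T_i-(A_i^{*}\cup B_i^{*})$ with $|F_i|=|A_i^{*}\cup B_i^{*}|\le 2ct$ by (G6), (G7); and $X\subseteq V(P_i^{t})$, $X\cap(A_i^{*}\cup B_i^{*})=\emptyset$, $|X|=|A\cup B'|\le 2kt$ by (G6).

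The only point requiring real care is the robustness of the almost-domination hypotheses (ii) and (iv) under the deletions. In round $i$ I would take the exceptional set to be
\[
E_A:=E_{A,i}\ \cup\ \bigcup_{j\ne i}\bigl(A_j^{*}\cup B_j^{*}\bigr)\ \cup\ \bigl\{v\in V(T)\setminus(A^{*}\cup B^{*}\cup E_{A,i}): v\notin N^{+}_{T_i}(A_i^{\ell})\text{ for some }\ell\in[t]\bigr\},
\]
and $E_B$ symmetrically. The first two pieces are disjoint from $A_i^{*}\cup B_i^{*}$ (by (G3) and (G9)) and the third lies in $V(T)\setminus(A^{*}\cup B^{*})$, so $E_A\subseteq V(T)\setminus(A_i^{*}\cup B_i^{*})$ and each $A_i^{\ell}$ out-dominates $V(T)\setminus(A_i^{*}\cup B_i^{*}\cup E_A)$ in $T_i$. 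For the size bound: each $A_i^{\ell}$ loses at most one out-edge per vertex to each earlier Hamilton cycle, and at most one out-edge (from its head) to each reserved later path, so at most $(c+1)(k-1)$ out-edges of $A_i^{\ell}$ are deleted in total; hence at most $(c+1)(k-1)t$ vertices become newly un-out-dominated, and $|E_A|\le |E_{A,i}|+2ct(k-1)+(c+1)(k-1)t\le d_{-}/50+O(k^{2}\log k)$ by (G3). Every $v\in E_A\setminus X$ lies in $V(T)\setminus(A\cup B')$ --- the only heads or tails added lie in $(A\cup B')\setminus(A_i^{*}\cup B_i^{*})=X$, and the remaining added vertices are outside $A\cup B'$ --- so $d^{-}_{T_i}(v)\ge d_{-}-(2k+c-1)$ for such $v$; since $d_{-}\ge\delta^{0}(T)\ge 10^{7}k^{2}\log k$ by (G3), (G8), the $O(k^{2}\log k)$ term is dwarfed and we obtain $|E_A|\le d^{-}/40$, where $d^{-}=\min\{d^{-}_{T_i}(v):v\in E_A\setminus X\}$. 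The identical argument handles $E_B$ and (iv) with $d_{+}\ge 10^{7}k^{2}\log k$. With (i)--(vii) all verified, Lemma~\ref{lem:mainengine} yields $H_i$, and after $k$ rounds we have the desired $k$ edge-disjoint Hamilton cycles. The genuine obstacle throughout is this balancing act: the reserved edge set must be large enough that the transitive tournaments, the linking paths and the activating edges of \emph{every} later round survive intact, small enough (degree loss $O(k)$ per vertex) to preserve $\delta(T_i)>n-4k$, and the deletions must move only $O(k^{2}\log k)$ vertices into each exceptional set so as to stay within the $d_{\pm}/50$ budget of (G3), (G4).
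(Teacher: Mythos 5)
Your proposal is correct and follows essentially the same route as the paper's own proof: both iterate Lemma~\ref{lem:mainengine} with the round-$i$ data, delete precisely the Hamilton cycles already found together with the material reserved for later rounds to form $T_i$, take $X_i = (A\cup B')\setminus(A_i^*\cup B_i^*)$, enlarge the exceptional sets to absorb vertices whose domination was destroyed by the deletions, and verify the degree and size hypotheses via (G1)--(G9). The only cosmetic difference is that you describe the inflated exceptional set abstractly (all vertices whose out-domination by some $A_i^\ell$ is lost) whereas the paper writes the same set out explicitly as a union of out-neighbourhoods in the deleted cycles and paths; the resulting bounds are the same.
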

\begin{proof}
Let $T$ be a $(C,k,t,c)$-good tournament, and let $n:=|T|$. Let
$A_{1}^{1},\dots,A_{k}^{t}$, %$A_{1}^{*},\dots,A_{k}^{*}$, 
$B_{1}^{1},\dots,B_{k}^{t}$,
%$B_{1}^{*},\dots,B_{k}^{*}$, 
$E_{A,1},\dots,E_{A,k}$, $E_{B,1},\dots,E_{B,k}$,
$F_{1},\dots,F_{k}$, $P_{1}^{1},\dots,P_{k}^{t}$,
%$A$, $B'$, 
$d_{-}$ and $d_{+}$ be as in Definition~\ref{def:good}. 
(Note that this also implicitly defines sets $A_1^*, \dots, A_k^*$, $A^*$, $A$, 
$B_1^*, \dots, B_k^*$, $B^*$, $B'$, and $F_1^{\textnormal{act}}, \dots, F_k^{\textnormal{act}}$
as in Definition~\ref{def:good}.)
Our aim is to apply Lemma~\ref{lem:mainengine} repeatedly to find $k$ edge-disjoint
Hamilton cycles. So suppose that for some $i\in [k]$ 
we have already found edge-disjoint Hamilton cycles $C_{1},\dots,C_{i-1}$
such that the following conditions hold:
\begin{itemize}
\item[(a)] $C_{1},\dots,C_{i-1}$ are edge-disjoint from $T[A_{j}^{\ell}],T[B_{j}^{\ell}]$ and $P_{j}^{\ell}$
for all $i\le j\le k$ and all $\ell\in[t]$.
\item[(b)] $E(C_{1}\cup\dots\cup C_{i-1})\cap F^{\rm act}_{j}=\emptyset$ for all $i\le j\le k$.
\end{itemize}
Intuitively, these conditions guarantee that none of the edges we will need in order to find $C_i, \dots, C_k$ are contained in $C_{1},\ldots,C_{i-1}$. 
We have to show that $T-C_{1}-\dots-C_{i-1}$ contains a Hamilton cycle $C_{i}$
which satisfies (a) and (b) (with $i$ replaced by $i+1$). 

Define
\begin{align*}
T_{i} & :=T-\left(\bigcup_{j<i}C_{j}\cup\bigcup_{j>i}F^{\rm act}_{j}\right)-\bigcup_{j>i,\ \ell\in[t]}(P_{j}^{\ell}\cup T[A_{j}^{\ell}]
\cup T[B_{j}^{\ell}]),\\
E_{A,i}^{\prime} & :=E_{A,i}\cup\left(\left(\bigcup_{j<i}N_{C_{j}}^{+}(A_{i}^{*})\cup\bigcup_{j>i,\ \ell\in[t]}N_{P_{j}^{\ell}}^{+}(A_{i}^{*})\cup A^{*}\cup B^{*}\right)\setminus(A_{i}^{*}\cup B_{i}^{*})\right),\\
E_{B,i}^{\prime} & :=E_{B,i}\cup\left(\left(\bigcup_{j<i}N_{C_{j}}^{-}(B_{i}^{*})\cup\bigcup_{j>i,\ \ell\in[t]}N_{P_{j}^{\ell}}^{-}(B_{i}^{*})\cup A^{*}\cup B^{*}\right)\setminus(A_{i}^{*}\cup B_{i}^{*})\right),\\
X_{i} & :=(A\cup B')\setminus(A_{i}^{*}\cup B_{i}^{*}).
\end{align*}
Then it suffices to find a Hamilton cycle $C_{i}$ of $T_{i}$.
We will do so by applying Lemma~\ref{lem:mainengine} to $T_{i}$,
$A_{i}^1,\dots,A_i^t$, $B_{i}^1,\dots,B_i^t$, $P_i^1,\dots, P_i^t$, $E_{A,i}^{\prime}$, $E_{B,i}^{\prime}$, $F_i$ 
and $X_{i}$. It therefore suffices to verify that the conditions of
Lemma~\ref{lem:mainengine} hold. 

We claim that for each $v\in V(T_{i})$, we have
\begin{equation}
d_{T_{i}}^{+}(v)\ge d_{T}^{+}(v)-(i-1)-(k-i)-1-c>d_{T}^{+}(v)-2k.\label{eq:d+bound-2}
\COMMENT{VP: Use $k \geq 20$ here}
\end{equation}
Indeed, it is immediate that $d_{C_{1}\cup\dots\cup C_{i-1}}^{+}(v)=i-1$.
Since by (G9) for each $j>i$ the paths $P_{j}^{1},\dots,P_{j}^{t}$ are vertex-disjoint,
$v$ is covered by at most $k-i$ of the paths $P_{i+1}^{1},\dots,P_{k}^{t}$
and hence $d_{P_{i+1}^{1}\cup\dots\cup P_{k}^{t}}^{+}(v)\le k-i$.
Recall from (G7) that $F_1\cup \dots\cup F_k$ consists of one covering edge $e_v$ for each $v\in A^*\cup B^*$.
Moreover, by (G9) the set $F_{1}\cup\dots\cup F_{k}$ is a matching in $T-(A^*\cup B^*)$ and $A_{1}^{1},\dots,A_{k}^{t},B_{1}^{1},\dots,B_{k}^{t}$
are all disjoint. Thus the digraph with edge set $F^{\rm act}_{1}\cup\dots\cup F^{\rm act}_{k}$
is a disjoint union of directed paths of length~two and therefore has maximum
out-degree one. Finally, since $A_{1}^{1},\dots,A_{k}^{t},B_{1}^{1},\dots,B_{k}^{t}$
are disjoint, $v$ belongs to at most one of $T[A_{1}^{1}],\dots,T[A_{k}^{t}],T[B_{1}^{1}],\dots,T[B_{k}^{t}]$.
Moreover, $\Delta^{+}(T[A_{j}^{\ell}]),\Delta^{+}(T[B_{j}^{\ell}])\le c$
for all $j>i$ and all $\ell\in [t]$ by (G1) and (G2). So (\ref{eq:d+bound-2}) follows.
Similarly, we have
\begin{equation}
d_{T_{i}}^{-}(v)>d_{T}^{-}(v)-2k.\label{eq:d-bound-2}
\end{equation}
In particular, $\delta(T_{i})>n-4k$, as required by Lemma~\ref{lem:mainengine}.

We have $\delta^{0}(T)>Ck^{2}$ by (G8), and hence $\delta^{0}(T_{i})>10^{6}k^{2}$
as required by Lemma~\ref{lem:mainengine}. The disjointness conditions
of Lemma~\ref{lem:mainengine} are satisfied by (G9) and the definition of $X_i$. Since $V(T_i) = V(T)$, it is immediate
that $A_i^1, \dots, A_i^t,B_i^1, \dots, B_i^t,X_i \subseteq V(T_i)$. We claim that $P_i^1,\dots,P_i^t\subseteq T_i$. Indeed,
by~(a) and (G5), each $P_i^\ell$ is edge-disjoint from $C_1\cup\dots\cup C_{i-1}$ and from $P_j^{m}$
for all $j>i$ and all $m\in [t]$. By (G7), each $P_i^\ell$ is edge-disjoint from $F_1^{\rm act}\cup \dots\cup F_k^{\rm act}$.
Moreover, by (G5), each $P_i^\ell$ is edge-disjoint from $T[A_j^{m}]\cup T[B_j^m]$ for all $j>i$ and%
    \COMMENT{This holds since (G5) implies that eg $|V(P_i^\ell) \cap A_j^{m}| \le 1$ for all $j>i$, $m\in [t]$}
all $m\in [t]$. Altogether this implies that $P_i^1,\dots,P_i^t\subseteq T_i$. 
We have $F_i \subseteq E(P_i^t) \subseteq E(T_i)$ by (G6). It therefore suffices to prove that
conditions~(i)--(vii) of Lemma~\ref{lem:mainengine} hold.

Condition~(v) follows from (G5). Condition~(vi) follows from (G6) and (G7). 
(Note that (G7) implies that 
$F^{\rm act}_{i}\cap F^{\rm act}_{j}=\emptyset$ for all $i \ne j$. So (G7), (b) and the definition of $T_i$ imply that
$F^{\rm act}_{i} \subseteq T_i$.)
By (G6) we have $X_{i}\subseteq V(P_{i}^{t})$ and by (G1) and (G2) we
have $|X_{i}|\le|A\cup B'|=2kt$, so condition (vii) holds too.

It therefore remains to verify conditions (i)--(iv).
We first check~(i). We have $2\le |A_{i}^{\ell}|\le c$ by (G1).
Moreover, we claim that  $T_{i}[A_{i}^{\ell}]=T[A_{i}^{\ell}]$ for all $\ell\in [t]$.
Indeed, to see this, note that $C_{1},\dots,C_{i-1}$ are edge-disjoint from $T[A_{i}^{\ell}]$
by (a); by (G9) for all $j>i$ and all $m\in [t]$ each path $P_{j}^{m}$ and each
$T[A_{j}^{m}]$, $T[B_{j}^{m}]$ is edge-disjoint from $T[A_{i}^{\ell}]$; by (G7) all edges in $F^{\rm act}_{j}$
for $j>i$ are incident to a vertex in $A_{j}^{*}\cup B_{j}^{*}$,
and hence  by (G9) none of these edges belongs to $T[A_{i}^{\ell}]$. Thus $T_{i}[A_{i}^{\ell}]=T[A_{i}^{\ell}]$
is a transitive tournament by (G1). Finally,
by (G1) the head of each $T[A_{i}^{\ell}]$ has out-degree at least $2n/5$
in $T$, and so  by (\ref{eq:d+bound-2}) out-degree at least $n/3$ in $T_{i}$. 
Hence condition (i) of Lemma~\ref{lem:mainengine}
is satisfied. A similar argument shows that condition (iii) of Lemma~\ref{lem:mainengine} is also satisfied.%
    \COMMENT{Indeed, similarly as before for all $j>i$ we have $T[B_{i}^{j}]=T_{i}[B_{i}^{j}]$.
Moreover, $2\le |B_{i}^{j}|\le c$ by (G2), and the tail of $B_{i}^{j}$ has in-degree
at least $2n/5$ in $T$ by (G2) and hence in-degree at least $n/3$
in $T_{i}$ by (\ref{eq:d-bound-2}). Hence condition (iii) of
Lemma~\ref{lem:mainengine} is also satisfied.}

We will next verify that condition (ii) of Lemma~\ref{lem:mainengine} holds too.
(G9) and the definition of $E_{A,i}^{\prime}$ together imply
that $E_{A,i}^{\prime}\cap (A_{i}^{*}\cup B_{i}^{*})=\emptyset$.
By (G3), each $A_{i}^{\ell}$ out-dominates $V(T)\setminus (A^{*}\cup B^{*}\cup E_{A,i})$
in $T$, and hence out-dominates
$V(T_{i})\setminus (A^{*}\cup B^{*}\cup E_{A,i}\cup N_{T-T_{i}}^{+}(A_{i}^{*}))$ in~$T_i$.
However, it follows from (G9) that for all $j>i$ and all $\ell,m\in [t]$, no edge in $F^{\rm act}_j$ has an endpoint in $A_i^{\ell}$ and
that $A_i^{\ell} \cap A_j^m = A_i^{\ell} \cap B_j^m = \emptyset$. Hence by (G9) we have that
\[
N_{T-T_{i}}^{+}(A_{i}^{*})=\bigcup_{j<i}N_{C_{j}}^{+}(A_{i}^{*})\cup\bigcup_{j>i,\ \ell\in[t]}N_{P_{j}^{\ell}}^{+}(A_{i}^{*}).
\]
 It therefore follows from the definitions
of $E_{A,i}^{\prime}$ and $T_{i}$ that $A_{i}^{\ell}$ out-dominates
$V(T_{i})\setminus (A_{i}^{*}\cup B_{i}^{*}\cup E_{A,i}^{\prime})$ in $T_i$ for all $\ell \in [t]$.

So in order to check that condition (ii) of Lemma~\ref{lem:mainengine} holds, it remains only to bound $|E_{A,i}^{\prime}|$ from above.
To do this, first note that by (G9), each vertex in $A_{i}^{*}$ is contained in at most $k-i$ of the
paths $P_{i+1}^{1},\dots,P_{k}^{t}$. Moreover, $|E_{A,i}|\le d_{-}/50$ by~(G3).
It therefore follows from the definition of $E_{A,i}^{\prime}$, (G1)
and (G2) that%
     \COMMENT{The final inequality below is a bottleneck for $C$}
\begin{align*}
|E_{A,i}^{\prime}| & \le|E_{A,i}|+\left|\bigcup_{j<i}N_{C_{i}}^{+}(A_{i}^{*})\right|+
\left|\bigcup_{j>i,\ \ell\in[t]}N_{P_{j}^{\ell}}^{+}(A_{i}^{*})\right|+|A^{*}|+|B^{*}|\\
 & \le\frac{d_{-}}{50}+(i-1)|A_{i}^{*}|+(k-i)|A_{i}^{*}|+2kct
 \le\frac{d_{-}}{50}+kct+2kct\le\frac{d_{-}}{45}. 
 \end{align*}
The last inequality follows since $d_{-}\ge\delta^{0}(T)\ge Ck^{2}\log k$
by (G8). Since $E_{A,i}^{\prime}$ is disjoint from $A_{i}^{*}\cup B_{i}^{*}$,
we have $E_{A,i}^{\prime}\setminus X_{i}=E_{A,i}^{\prime}\setminus(A\cup B')$.
Hence for all $v\in E_{A,i}^{\prime}\setminus X_{i}$ we have
\[
d_{T_{i}}^{-}(v)\stackrel{(\ref{eq:d-bound-2})}{\ge} d_{T}^{-}(v)-2k \stackrel{{\rm (G3)}}{\ge} d_{-}-2k \ge \frac{19}{20} d_-
\]
and so
\[
|E_{A,i}^{\prime}|\le\frac{d_{-}}{45}\le\frac{1}{40}\min\{d_{T_{i}}^{-}(v):v\in E_{A,i}^{\prime}\setminus X_{i}\}.
\]
This shows that condition (ii) of Lemma~\ref{lem:mainengine} is satisfied.
The argument that (iv) holds is similar.%
   \COMMENT{It is immediate from (G9) and the definition of $E_{B,i}^{\prime}$
that $E_{B,i}^{\prime}\cap (A_{i}^{*}\cup B_{i}^{*})=\emptyset$.
By (G4), each $B_{i}^{\ell}$ in-dominates $V(T)\setminus (A^{*}\cup B^{*}\cup E_{B,i})$
in $T$, and hence each $B_{i}^{\ell}$ in-dominates $V(T_{i})\setminus (A^{*}\cup B^{*}\cup E_{B,i}\cup N_{T-T_{i}}^{-}(B_{i}^{*}))$
in $T_i$. It therefore follows as above that each $B_{i}^{\ell}$ in-dominates
$V(T')\setminus (A_{i}^{*}\cup B_{i}^{*}\cup E_{B,i}^{\prime})$ in $T_i$. It remains only
to bound $|E_{B,i}^{\prime}|$ above. From (G5), each vertex in $B_{i}^{*}$ is contained in at most $k-i$
paths $P_{i+1}^{1},\dots,P_{k}^{t}$. From (G4), we have $|E_{B,i}|\le d_{+}/50$.
It therefore follows from the definition of $E_{B,i}^{\prime}$, (G1)
and (G2) that
\begin{eqnarray*}
|E_{B,i}^{\prime}| & \le & |E_{B,i}|+\left|\bigcup_{j<i}N_{C_{i}}^{-}(B_{i}^{*})\right|+
\left|\bigcup_{j>i,\ \ell\in[t]}N_{P_{j}^{\ell}}^{-}(B_{i}^{*})\right|+|A^{*}|+|B^{*}|\\
 & \le & \frac{d_{+}}{50}+(i-1)|B_{i}^{*}|+(k-i)|B_{i}^{*}|+2kct\le\frac{d_{+}}{45}.
\end{eqnarray*}
Since $E_{B,i}^{\prime}$ is disjoint from $A_{i}^{*}\cup B_{i}^{*}$,
we have $E_{B,i}^{\prime}\setminus X_{i}=E_{B,i}^{\prime}\setminus(A\cup B)$.
Hence by (G4) and (\ref{eq:d+bound-2}), for all $v\in E_{B,i}^{\prime}\setminus X_{i}$
we have
\[
d_{T'}^{+}(v)\ge d_{T}^{+}(v)-2k\ge d_{+}-2k.
\]
Hence
\[
|E_{B,i}^{\prime}|\le\frac{d_{+}}{45}\le\frac{1}{40}\min\{d_{T'}^{+}(v):v\in E_{B,i}^{\prime}\setminus X_{i}\},
\]
and condition (iv) of Lemma~\ref{lem:mainengine} is satisfied.}
We may therefore apply Lemma~\ref{lem:mainengine} to find a Hamilton cycle $C_i$ in $T_i$ as desired.
\end{proof}

%%%%%%%%%%%%%%%%%%%%%%%%%%%%%%%%%%%%%%%%%%%%%%%%%%%%%%%%%%%%%%%%%%%%%%%%%%%%%%%%%%

\section{Highly-linked tournaments are good\label{sec:linkagegood}}

The aim of this section is to prove that any sufficiently highly-linked tournament is $(C,k,t,c)$-good. We first
show that it is very easy to find covering edges for any given vertex -- we will use the following
lemma to find matchings $F_{1},\dots,F_{k}$ consisting of covering edges
as in Definition~\ref{def:good}.

\begin{lem}
\label{lem:coveringedge}Suppose that $T$ is a strongly $2$-connected tournament,
and $v\in V(T)$. Then there exists a covering edge for $v$.\end{lem}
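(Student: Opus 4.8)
The plan is to use the fact that strong $2$-connectivity of $T$ means $T-v$ is strongly connected, and then to hunt for an edge directed from the in-neighbourhood of $v$ to the out-neighbourhood of $v$ --- such an edge is by definition a covering edge for $v$.

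First I would record two easy observations. Since $T$ is strongly $2$-connected we have $|T|>2$, and in particular $T$ is strongly connected, so $v$ has at least one in-neighbour and at least one out-neighbour; thus $N^-_T(v)$ and $N^+_T(v)$ are both non-empty. As $T$ is a tournament, every vertex of $V(T)\setminus\{v\}$ lies in exactly one of $N^-_T(v)$, $N^+_T(v)$, so these two sets partition $V(T-v)$ into two non-empty parts. Next, since $T$ is strongly $2$-connected, $T-v$ is strongly connected. Hence, fixing any $x_0\in N^-_T(v)$ and $y_0\in N^+_T(v)$, there is a directed path in $T-v$ from $x_0$ to $y_0$; this path starts in $N^-_T(v)$ and ends in $N^+_T(v)$, so it traverses some edge $xy$ with $x\in N^-_T(v)$ and $y\in N^+_T(v)$. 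Then $xv,vy\in E(T)$ by the choice of $x$ and $y$, and $xy\in E(T)$, so $xy$ is a covering edge for $v$, completing the proof.

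There is essentially no obstacle here; the only point to keep in mind is that strong connectivity of $T$ alone is not enough (for instance the directed triangle on $\{v,a,b\}$ with $v\to b\to a\to v$ is strongly connected but admits no covering edge for $v$), so the hypothesis that $T-v$ remains strongly connected --- i.e. the strong $2$-connectivity of $T$ --- is genuinely used in the argument above.
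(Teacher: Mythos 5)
Your proof is correct and follows essentially the same argument as the paper: note $N^-(v)$ and $N^+(v)$ are non-empty, use strong connectivity of $T-v$ to find an edge from $N^-(v)$ to $N^+(v)$, and observe that such an edge is a covering edge for $v$.
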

\begin{proof}
Since $T$ is strongly connected and $|T|>1$, we have $N^{+}(v),N^{-}(v)\ne\emptyset$.
Since $T-v$ is strongly connected, there is an edge $xy$ from $N^{-}(v)$
to $N^{+}(v)$. But then $xv,vy\in E(T)$, so $xy$ is a covering
edge for $v$, as desired.
\end{proof}

The next lemma will be used to obtain paths $P_{1}^{1},\dots,P_{k}^{t}$
as in Definition~\ref{def:good}. Recall that we require $F_{i}\subseteq E(P_{i}^{t})$ and
$(A\cup B')\setminus(A_{i}^{*}\cup B_{i}^{*})\subseteq V(P_{i}^{t})$
for all $i\in [k]$. We will ensure the latter requirement by first covering $(A\cup B')\setminus(A_{i}^{*}\cup B_{i}^{*})$
with few paths and then linking these paths together -- hence the form of the lemma.

\begin{lem}
\label{lem:shortlinkagewithpaths}Let $s\in\mathbb{N}$, and let
$T$ be a digraph. Let $x_{1},\dots, x_k$, $y_{1},\dots,y_{k}$ be distinct vertices of $T$, and let $\mathcal{Q}_{1},\dots,\mathcal{Q}_{k}$
be (possibly empty) path systems in $T-\{x_{1},\dots, x_k,y_{1},\dots,y_{k}\}$ with $E(\mathcal{Q}_{i})\cap E(\mathcal{Q}_{j})=\emptyset$
whenever $i\neq j$. Write
\begin{equation}\label{eq:defm}
m:=k+\sum_{i=1}^{k}|\mathcal{Q}_{i}|+\left|\bigcup_{i=1}^{k}V(\mathcal{Q}_{i})\right|,
\end{equation}
and suppose that $T$ is $2s m$-linked. Then there exist edge-disjoint paths $P_{1},\dots,P_{k}\subseteq T$
satisfying the following properties:
\begin{enumerate}
\item $P_{i}$ is a path from $x_{i}$ to $y_{i}$ for all $i\in [k]$.
\item $Q\subseteq P_{i}$ for  all $Q\in\mathcal{Q}_{i}$ and all $i\in [k]$.
\item $V(P_{i})\cap V(P_{j})\subseteq V(\mathcal{Q}_{i})\cap V(\mathcal{Q}_{j})$ for all $i\neq j$.
\item $|P_{1}\cup\dots\cup P_{k}|\le|T|/s+|V(\mathcal{Q}_{1})\cup\dots\cup V(\mathcal{Q}_{k})|$.%
         \COMMENT{Can't write $V(\mathcal{Q}_{1}\cup\dots\cup \mathcal{Q}_{k})$
since $\mathcal{Q}_1 \cup \dots \mathcal{Q}_k$ need not be a path system.}
\end{enumerate}
\end{lem}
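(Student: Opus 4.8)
The plan is to build each $P_i$ by stringing the paths of $\mathcal{Q}_i$ together in an arbitrary order, inserting short ``connecting paths'' (supplied by Lemma~\ref{lem:shortlinkage}) between consecutive pieces, and also between $x_i$ and the first piece and between the last piece and $y_i$. For each $i$ write $\mathcal{Q}_i=\{Q_i^1,\dots,Q_i^{q_i}\}$ with $q_i=|\mathcal{Q}_i|$, set $a_i^j:=t(Q_i^j)$ and $b_i^j:=h(Q_i^j)$, and let $W:=\bigcup_i V(\mathcal{Q}_i)$, so that $m=k+\sum_i q_i+|W|$ and $W$ is disjoint from $\{x_1,\dots,x_k,y_1,\dots,y_k\}$. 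The one genuine obstacle is that the connecting paths must avoid every vertex of $W$ (in particular the terminals $a_i^j$, $b_i^j$ of paths belonging to other $\mathcal{Q}_{i'}$'s) as well as each other; I would handle this by a pre-extension step that pushes the two ends of every connecting path out of $W$, so that all connecting paths can be found at once inside $T-W$.

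Concretely, since $T$ is $2ms$-linked it is strongly $2ms$-connected (to see that deleting any set $S$ of at most $2ms-1$ vertices leaves a strongly connected digraph, apply Proposition~\ref{prop:samelinkage} to the pair one wishes to join together with the pairs $(v,v)$ for $v\in S$), and hence $\delta^0(T)\ge 2ms\ge 2m=2|W|+2k+2\sum_i q_i$. As this exceeds $|W|+2k+2\sum_i q_i$ whenever $W\neq\emptyset$, I can greedily choose, for all $i\in[k]$ and $j\in[q_i]$, pairwise distinct vertices $\alpha_i^j\in N^-_T(a_i^j)$ and $\beta_i^j\in N^+_T(b_i^j)$ all lying outside $W\cup\{x_1,\dots,x_k,y_1,\dots,y_k\}$.

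Next I would form the list of $m':=k+\sum_i q_i$ ordered pairs: for $i$ with $q_i\ge 1$ these are $(x_i,\alpha_i^1)$, the pairs $(\beta_i^j,\alpha_i^{j+1})$ for $1\le j<q_i$, and $(\beta_i^{q_i},y_i)$; for $i$ with $q_i=0$ simply $(x_i,y_i)$. All $2m'$ vertices occurring here are distinct and lie in $T-W$. By Proposition~\ref{prop:robustlinkage} (with $X:=W$, $F:=\emptyset$, $\ell:=\lceil|W|/2\rceil$) the digraph $T-W$ is $\big(2ms-\lceil|W|/2\rceil\big)$-linked, and since $2ms-\lceil|W|/2\rceil\ge 2ms-|W|\ge 2s(m-|W|)=2m's$, it is $2m's$-linked. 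Applying Lemma~\ref{lem:shortlinkage} to $T-W$ with parameter $s$ and these $m'$ pairs yields internally disjoint paths realizing them; as their endpoints are all distinct, these paths are in fact pairwise vertex-disjoint, and their union $\mathcal R$ satisfies $|V(\mathcal R)|\le |T-W|/s\le |T|/s$. For each $i$ I concatenate the connecting paths belonging to $i$ with the edges $\alpha_i^j a_i^j$, $b_i^j\beta_i^j$ and the paths $Q_i^j$ to obtain a directed path $P_i$ from $x_i$ to $y_i$ with $Q\subseteq P_i$ for every $Q\in\mathcal{Q}_i$; this gives (i) and (ii).

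Finally I would verify edge-disjointness and (iii)--(iv). Every edge of $\mathcal{Q}_i$ has both ends in $W$, every edge of a connecting path has both ends outside $W$, and every edge $\alpha_i^j a_i^j$ or $b_i^j\beta_i^j$ has exactly one end in $W$ and is determined by the distinct vertex $\alpha_i^j$ or $\beta_i^j$; combined with $E(\mathcal{Q}_i)\cap E(\mathcal{Q}_j)=\emptyset$ and the vertex-disjointness of the connecting paths, this shows the $P_i$ are pairwise edge-disjoint. Writing $\mathcal R_i$ for the connecting paths used by $P_i$, we have $V(P_i)=V(\mathcal R_i)\cup V(\mathcal{Q}_i)$; the sets $V(\mathcal R_i)$ are pairwise disjoint and disjoint from $W$, so $V(P_i)\cap V(P_j)=V(\mathcal{Q}_i)\cap V(\mathcal{Q}_j)$, which is (iii) (indeed slightly stronger). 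And $V(P_1\cup\dots\cup P_k)=V(\mathcal R)\cup W$ is a disjoint union, so $|P_1\cup\dots\cup P_k|\le |T|/s+|W|=|T|/s+|V(\mathcal{Q}_1)\cup\dots\cup V(\mathcal{Q}_k)|$, which is (iv). The pre-extension step is the crux; the remaining verification is routine bookkeeping with the disjointness and degree bounds above.
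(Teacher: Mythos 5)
Your approach is essentially sound but takes a genuinely different route from the paper's, and one of the side-arguments you supply does not work. The paper never deletes all of $W:=\bigcup_i V(\mathcal{Q}_i)$: it instead works in $T':=T\bigl[(V(T)\setminus W)\cup\bigcup_{i,j}\{a_i^j,b_i^j\}\bigr]-F$, where $F$ collects the edges of the single-edge paths in the $\mathcal{Q}_i$; that is, it deletes only the \emph{interiors} of the paths in the $\mathcal{Q}_i$ while keeping their endpoints, and then applies Proposition~\ref{prop:robustlinkage} and Lemma~\ref{lem:shortlinkage} directly to the pairs $(x_i,a_i^1),(b_i^1,a_i^2),\dots,(b_i^{t_i},y_i)$. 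No degree argument is needed, and there is no pre-extension step. Your variant, which deletes all of $W$ and pushes every $a_i^j,b_i^j$ one step out to fresh vertices $\alpha_i^j,\beta_i^j$, is a workable alternative, but it makes the proof depend on the extra fact $\delta^0(T)\ge 2m$.

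The argument you give for that extra fact is flawed. Applying Proposition~\ref{prop:samelinkage} to $(u,w)$ together with the pairs $(v,v)$ for $v\in S$ tells you nothing: a trivial path from $v$ to $v$ has empty interior, so internal disjointness from it is vacuous, and nothing in the conclusion forces the $uw$-path to avoid $S$. Fortunately you only need $\delta^0(T)\ge 2ms$, and that does follow from $2ms$-linkedness by a short direct argument: for any vertex $v$, since $|T|\ge 4ms$ one can pick $2ms$ vertices $u_1,\dots,u_{2ms}$ outside $N^+(v)\cup\{v\}$ whenever $d^+(v)<2ms$; linking $v$ to each $u_i$ via Proposition~\ref{prop:samelinkage} gives internally disjoint paths of length at least $2$ whose second vertices are $2ms$ distinct out-neighbours of $v$, a contradiction. (This is also the fact the paper uses implicitly when deducing $\delta^0(T)\ge Ck^2\log k$ in the proof of Lemma~\ref{lem:good}.) With this repair, your remaining bookkeeping for (i)--(iv) checks out, including the edge case $W=\emptyset$ if handled separately. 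One shared caveat: like the paper, you read Lemma~\ref{lem:shortlinkage}, when applied to pairs with $2m'$ distinct endpoints, as yielding pairwise \emph{vertex}-disjoint paths, which is slightly stronger than the stated ``internally disjoint''; the paper's own proof of this lemma relies on the same implicit strengthening, so you are on equal footing there.
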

\begin{proof}
For all $i\in[k]$, let $a_{i}^{1} \dots b_{i}^{1},\dots,a_{i}^{t_{i}} \dots b_{i}^{t_{i}}$ denote the paths in $\mathcal{Q}_{i}$.
Let $F\subseteq E(T)$ denote the set of all those edges which form a path of length one in $\mathcal{Q}_{1}\cup \dots\cup \mathcal{Q}_{k}$.
Let
\[
T':=T\biggl[\biggl(V(T)\setminus\bigcup_{i=1}^{k}V(\mathcal{Q}_{i})\biggr)\cup\bigcup_{i=1}^{k}\bigcup_{j=1}^{t_{i}}\{a_{i}^{j},b_{i}^{j}\}\biggr]-F.
\]
Note that $E(T')\cap (E(\mathcal{Q}_1)\cup\dots\cup E(\mathcal{Q}_k))=\emptyset$.
Define sets $X_{1},\dots,X_{k}$ of ordered pairs of vertices of $T'$
by
\[
X_{i}:=\begin{cases}
\{(x_{i},a_{i}^{1}),(b_{i}^{1},a_{i}^{2}),\dots,(b_{i}^{t_{i}-1},a_{i}^{t_{i}}),(b_{i}^{t_{i}},y_{i})\}, & \text{if }\mathcal{Q}_{i}\ne\emptyset,\\
\{(x_{i},y_{i})\} & \text{if } \mathcal{Q}_{i}=\emptyset,
\end{cases}
\]
and let $X:=X_{1}\cup\dots\cup X_{k}$. 
Let $\ell:= 2sm-2s|X|$. Since $|V(T)\setminus V(T')|+|F|\le|V(\mathcal{Q}_{1})\cup\dots\cup V(\mathcal{Q}_{k})|$
and $|X|=k+\sum_{i=1}^{k}|\mathcal{Q}_{i}|$, it follows that
\[2\ell = 4s(m - |X|) \stackrel{(\ref{eq:defm})}{=} 4s\left|\bigcup_{i=1}^k V(\mathcal{Q}_i)\right| \ge |V(T)\setminus V(T')| + 2|F|.\]
Thus by Proposition~\ref{prop:robustlinkage}, $T'$ is $2s|X|$-linked.
We may therefore apply Lemma~\ref{lem:shortlinkage} to $X$ in order to obtain, for each $i\in [k]$, a path system $\mathcal{P}_i$
whose paths link the pairs in $X_i$ and such that whenever $i\neq j$, we have $E(\mathcal{P}_{i})\cap E(\mathcal{P}_{j})=\emptyset$ and 
$V(\mathcal{P}_{i})\cap V(\mathcal{P}_{j})$ consists of exactly the vertices that lie in a pair in both $X_i$ and $X_j$. Let $P_i$
be the path obtained from the union of all paths in $\mathcal{P}_i$ and
all paths in $\mathcal{Q}_i$. Then $P_1,\dots,P_k$ are edge-disjoint paths satisfying (i)--(iv).
\end{proof}

The next lemma shows that given a vertex $v$ in a tournament~$T$, we can find a small transitive subtournament whose head is~$v$
and which out-dominates almost all vertices of $T$.

\begin{lem}
\label{lem:out-dom-trans}Let $T$ be a tournament on $n$ vertices,
let $v\in V(T)$, and suppose that $c\in\mathbb{N}$ satisfies $2\le c\le\log d^{-}(v)-1$.
Then there exist disjoint sets $A,E\subseteq V(T)$ such that the
following properties hold:
\begin{enumerate}
\item $2\le |A|\le c$ and $T[A]$ is a transitive tournament with head $v$.
\item $A$ out-dominates $V(T)\setminus (A\cup E)$.
\item $|E|\le(1/2)^{c-1}d^{-}(v)$. 
\end{enumerate}
\end{lem}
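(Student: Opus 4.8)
The plan is a greedy halving argument carried out entirely inside $N^-_T(v)$. Note first that the hypotheses $2\le c$ and $c\le\log d^-(v)-1$ force $d:=d^-_T(v)\ge 2^{c+1}\ge 8$, so in particular $N^-_T(v)\ne\emptyset$; we will build the transitive set $A'$ that we want inside $N^-_T(v)$ and then append $v$ as its head.

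Concretely, I would construct a nested sequence $N^-_T(v)=U_0\supseteq U_1\supseteq\dots$ and vertices $v_1,v_2,\dots$ as follows. Given $U_{i-1}\ne\emptyset$, let $v_i\in U_{i-1}$ be a vertex of maximum out-degree in the subtournament $T[U_{i-1}]$; since the average out-degree in a tournament on $m$ vertices is $(m-1)/2$, we have $d^+_{T[U_{i-1}]}(v_i)\ge(|U_{i-1}|-1)/2$. Set $U_i:=U_{i-1}\setminus(\{v_i\}\cup N^+_T(v_i))$, so that
\[
|U_i|=|U_{i-1}|-1-d^+_{T[U_{i-1}]}(v_i)\le\frac{|U_{i-1}|-1}{2}\le\frac{|U_{i-1}|}{2},
\]
and hence $|U_i|\le d/2^i$ whenever $U_i$ is defined. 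I run this process for at most $c-1$ steps, stopping early if some $U_i$ becomes empty, and let $v_1,\dots,v_j$ (with $1\le j\le c-1$) be the vertices obtained. Then set $A':=\{v_1,\dots,v_j\}$, $E:=U_j$ (so $E=\emptyset$ if we stopped early), and finally $A:=A'\cup\{v\}$.

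The verification is routine. Each $v_i\in N^-_T(v)$, so every vertex of $A'$ sends an edge to $v$; and for $i<i'$ we have $v_{i'}\in U_{i'-1}\subseteq U_i$, which is disjoint from $N^+_T(v_i)$, so $v_{i'}v_i\in E(T)$. Hence $T[A']$ is transitive and, since $v$ receives an edge from every vertex of $A'$, $T[A]$ is transitive with head $v$; together with $1\le|A'|\le c-1$ this gives $2\le|A|\le c$, i.e.~(i). For~(ii): any $w\in V(T)\setminus(A\cup E)$ with $vw\notin E(T)$ satisfies $w\in N^-_T(v)=U_0$, and since $w\notin A'$ and $w\notin U_j$ there is some $i\le j$ with $w\in U_{i-1}\setminus U_i$ and $w\ne v_i$, which forces $v_iw\in E(T)$; thus $w$ is out-dominated by $A$ (by $v$ if $vw\in E(T)$, by $v_i$ otherwise). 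For~(iii): if $j=c-1$ then $|E|=|U_{c-1}|\le d/2^{c-1}=(1/2)^{c-1}d^-(v)$, and if we stopped early then $E=\emptyset$. Disjointness of $A$ and $E$ is clear, since $U_j\subseteq N^-_T(v)$ so $v\notin E$, and $v_i\notin U_i\supseteq U_j$ for each $i\le j$.

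I do not expect a real obstacle here. The only points that need a little care are choosing the greedy vertices so that transitivity of $T[A']$ comes for free (this is exactly why we delete all of $N^+_T(v_i)$ at each step, forcing every later chosen vertex to beat $v_i$), and the off-by-one bookkeeping between the $c$ in the bound $|A|\le c$ and the $c-1$ halving steps that produce the exponent $(1/2)^{c-1}$.
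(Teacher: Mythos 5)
Your proof is correct and is essentially the same greedy halving argument the paper uses. The paper also builds up a transitive set while halving the set of not-yet-dominated vertices: it takes $v_1:=v$, lets $E_i$ be the common in-neighbourhood of $v_1,\dots,v_i$ (which is exactly your $U_{i-1}$ after an index shift), and at each step picks a vertex of minimum in-degree in $T[E_i]$, which in a tournament is the same as your vertex of maximum out-degree; the only cosmetic differences are that the paper keeps $v$ inside the set from the start rather than appending it at the end, and it terminates as soon as $|E_i|<4$ (then using $c\le\log d^-(v)-1$ to check the size bound) rather than always running for $c-1$ steps as you do.
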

The fact that the bound in (iii) depends on $d^-(v)$ is crucial:
for instance, we can apply Lemma~\ref{lem:out-dom-trans} with $v$ being the vertex of lowest 
in-degree. Then (iii) implies that the `exceptional set' $|E|$ is much smaller than 
$d^-(v) \le d^-(w)$ for any $w \in E$. So while $w$ is not dominated by $A$ directly, 
it is dominated by many vertices outside $E$. This will make it possible to cover 
$E$ by paths whose endpoints lie outside $E$. (More formally, the lemma is used to ensure (G3), which in turn is 
used for (Q2) in the proof of Lemma~\ref{lem:mainengine}).
\begin{proof}
Let $v_{1}:=v$. We will find $A$ by repeatedly choosing vertices $v_1,\dots,v_i$ such that
the size of their common in-neighbourhood is minimised at each
step. More precisely, let $A_{1}:=\{v_{1}\}$. Suppose that for some $i<c$ we have already found a set
$A_{i}=\{v_{1},\dots,v_{i}\}$ such that $T[A_i]$ is a transitive tournament with head $v_{1}$, and such that the
common in-neighbourhood $E_{i}$ of $v_{1},\dots,v_{i}$ satisfies
\[
|E_{i}|\le\frac{1}{2^{i-1}}d^{-}(v).
\]
Note that these conditions are satisfied for $i=1$. Moreover, 
note that $E_{i}$ is the set of all those vertices in $T-A_i$ which are not out-dominated by $A_{i}$.
If $|E_{i}|<4$, then we have
\begin{equation}\label{eq:sizeEi}
|E_{i}|<4=\frac{1}{2^{\log d^{-}(v)-2}}d^{-}(v)\le\frac{1}{2^{c-1}}d^{-}(v),
\end{equation}
and so $A_{i}$ satisfies~(i)--(iii). (Note that $|A_i|\ge 2$ since the assumptions imply that $d^-(v) \ge 8$.) 
Thus in this case we can take $A:=A_i$ and $E:=E_i$.

So suppose next that $|E_{i}|\ge4$. In this case we will extend $A_i$ to $A_{i+1}$ by adding a suitable vertex $v_{i+1}$.
By Proposition~\ref{prop:largedegreevertex}, $E_{i}$ contains a vertex $v_{i+1}$ of in-degree at most $|E_{i}|/2$ in
$T[E_{i}]$. Let $A_{i+1}:=\{v_{1},\dots,v_{i+1}\}$ and let $E_{i+1}$ be the common in-neighbourhood of $v_{1},\dots,v_{i+1}$.
Then $T[A_{i+1}]$ is a transitive tournament with head $v_1$ and 
\[
|E_{i+1}|\le\frac{1}{2}|E_{i}|\le\frac{1}{2^{i}}d^{-}(v).
\]
By repeating this construction, either we will find $|E_{i}|<4$ for
some $i< c$ (and therefore take $A:=A_{i}$ and $E:=E_{i}$) or we
will obtain sets $A_c$ and $E_c$ satisfying
(i)--(iii).
\end{proof}

We will also need the following analogue of Lemma~\ref{lem:out-dom-trans} for in-dominating sets. It immediately follows from
Lemma~\ref{lem:out-dom-trans} by reversing the orientations of all edges.

\begin{lem}
\label{lem:in-dom-trans}Let $T$ be a tournament on $n$ vertices,
let $v\in V(T)$, and suppose that $c\in\mathbb{N}$ satisfies $2\le c\le\log d^{+}(v)-1$.
Then there exist disjoint sets $B,E\subseteq V(T)$ such that the
following properties hold:
\begin{enumerate}
\item $2\le |B|\le c$ and $T[B]$ is a transitive tournament with tail $v$.
\item $B$ in-dominates $V(T)\setminus (B\cup E)$.
\item $|E|\le(1/2)^{c-1}d^{+}(v)$.
\end{enumerate}
\end{lem}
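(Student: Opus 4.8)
The plan is to derive this immediately from Lemma~\ref{lem:out-dom-trans} by passing to the reverse tournament. Let $T'$ denote the tournament on the same vertex set as $T$ obtained by reversing the orientation of every edge; thus $xy \in E(T')$ if and only if $yx \in E(T)$. In particular $d^-_{T'}(v) = d^+_T(v)$ for every vertex $v$, so the hypothesis $2 \le c \le \log d^+_T(v) - 1$ is exactly the hypothesis $2 \le c \le \log d^-_{T'}(v) - 1$ needed in order to apply Lemma~\ref{lem:out-dom-trans} to $T'$ and the vertex $v$.

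First I would apply Lemma~\ref{lem:out-dom-trans} to $T'$ and $v$ to obtain disjoint sets $A, E \subseteq V(T') = V(T)$ such that $2 \le |A| \le c$, $T'[A]$ is a transitive tournament with head $v$, the set $A$ out-dominates $V(T') \setminus (A \cup E)$, and $|E| \le (1/2)^{c-1} d^-_{T'}(v)$. I would then set $B := A$ (keeping the same set $E$) and translate each of these conclusions back to $T$. Reversing all edges turns $T'[A]$ back into $T[B]$, and it turns a transitive tournament with head $v$ into a transitive tournament with tail $v$; this gives~(i). By definition, ``$A$ out-dominates $X$ in $T'$'' means every vertex of $X$ is an out-neighbour in $T'$ of some vertex of $A$, equivalently every vertex of $X$ is an in-neighbour in $T$ of some vertex of $B = A$, i.e.\ $B$ in-dominates $X$ in $T$; applying this with $X = V(T) \setminus (B \cup E)$ gives~(ii). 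Finally $|E| \le (1/2)^{c-1} d^-_{T'}(v) = (1/2)^{c-1} d^+_T(v)$, which is~(iii).

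There is essentially no obstacle here: the only points to verify are that transitivity of an induced subtournament is preserved under edge reversal (with the roles of head and tail interchanged), that the out-dominate/in-dominate relations are swapped by edge reversal, and that $d^-_{T'}(v) = d^+_T(v)$. Each of these is immediate from the definitions in Section~\ref{sec:notation}.
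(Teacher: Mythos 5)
Your proof is correct and is exactly the approach the paper takes: the paper states Lemma~\ref{lem:in-dom-trans} "immediately follows from Lemma~\ref{lem:out-dom-trans} by reversing the orientations of all edges," and your argument simply spells out that one-line reduction in full detail. All the translations you check (degrees, transitivity with head/tail swapped, out-domination becoming in-domination) are the right ones, and the verification is complete.
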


We will now apply Lemma~\ref{lem:out-dom-trans} repeatedly to obtain many pairwise disjoint small
almost-out-dominating sets. We will also prove an analogue for in-dominating sets.
These lemmas will be used in order to obtain sets $A_{1}^{1},\dots,A_{k}^{t}$, $B_{1}^{1},\dots,B_{k}^{t}$,
$E_{A,1},\dots,E_{A,k}$ and $E_{B,1},\dots,E_{B,k}$ as in
Definition~\ref{def:good}.

\begin{lem}
\label{lem:findA*s}Let $T$ be a tournament on $n$ vertices, 
$U\subseteq V(T)$ and $c\in \mathbb{N}$ with $c\ge 2$. Suppose that $\delta^{-}(T)\ge2^{c+1}+c|U|$.
Then there exist families $\{A_{v}:v\in U\}$ and $\{E_{v}:v\in U\}$
of subsets of $V(T)$ such that the following properties hold:
\begin{enumerate}
\item $A_{v}$ out-dominates $V(T)\setminus (E_{v}\cup \bigcup_{u\in U}A_{u})$
for all $v\in U$.
\item $T[A_{v}]$ is a transitive tournament with head $v$ for all $v\in U$.
\item $|E_{v}|\le(1/2)^{c-1}d^{-}(v)$ for all $v\in U$.
\item $2\le |A_{v}|\le c$ for all $v\in U$.
\item $A_{u}\cap E_{v}=\emptyset$ for all $u,v\in U$.
\item $A_{u}\cap A_{v}=\emptyset$ for all $u\ne v$.
\end{enumerate}
\end{lem}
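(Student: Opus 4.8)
The plan is to build the sets $A_v$ one vertex of $U$ at a time, greedily, each time applying Lemma~\ref{lem:out-dom-trans} inside a suitably shrunk subtournament so that all the required disjointness comes for free, and then to do a harmless clean-up of the exceptional sets at the very end.

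Enumerate $U=\{v_1,\dots,v_r\}$, where $r=|U|$. Inductively, suppose we have already produced transitive tournaments $T[A_{v_1}],\dots,T[A_{v_{j-1}}]$ with heads $v_1,\dots,v_{j-1}$ and sets $E'_{v_1},\dots,E'_{v_{j-1}}$ such that $2\le|A_{v_i}|\le c$, $A_{v_i}\cap U=\{v_i\}$, the $A_{v_i}$ are pairwise disjoint, and $|E'_{v_i}|\le(1/2)^{c-1}d^-_T(v_i)$ for all $i<j$. Write $W_j:=A_{v_1}\cup\dots\cup A_{v_{j-1}}$ and set
\[
T_j:=T-\big((W_j\cup U)\setminus\{v_j\}\big).
\]
Since $A_{v_i}\cap U=\{v_i\}$ for $i<j$, we have $W_j\cap U=\{v_1,\dots,v_{j-1}\}$; in particular $v_j\notin W_j$, and $|(W_j\cup U)\setminus\{v_j\}|=|W_j\setminus U|+(|U|-1)\le(j-1)(c-1)+(r-1)\le(r-1)c<c|U|$. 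Hence $d^-_{T_j}(v_j)\ge d^-_T(v_j)-c|U|\ge\delta^-(T)-c|U|\ge 2^{c+1}$, so $2\le c\le\log d^-_{T_j}(v_j)-1$ and Lemma~\ref{lem:out-dom-trans} applies to $T_j$ and $v_j$. It yields disjoint $A_{v_j},E'_{v_j}\subseteq V(T_j)$ with $2\le|A_{v_j}|\le c$, with $T_j[A_{v_j}]$ a transitive tournament of head $v_j$, with $A_{v_j}$ out-dominating $V(T_j)\setminus(A_{v_j}\cup E'_{v_j})$, and with $|E'_{v_j}|\le(1/2)^{c-1}d^-_{T_j}(v_j)\le(1/2)^{c-1}d^-_T(v_j)$. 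As $A_{v_j}\subseteq V(T_j)\subseteq V(T)$ and $T$ is a tournament, $T[A_{v_j}]=T_j[A_{v_j}]$; moreover $A_{v_j}\cap U=\{v_j\}$ and $A_{v_j}\cap W_j=\emptyset$. This preserves the induction hypotheses, and running it to $j=r$ produces all the $A_v$ and $E'_v$.

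Finally set $A^*:=\bigcup_{u\in U}A_u$ and define $E_v:=E'_v\setminus A^*$ for each $v\in U$. Properties (ii) and (iv) are immediate from Lemma~\ref{lem:out-dom-trans}; (iii) follows from $|E_v|\le|E'_v|\le(1/2)^{c-1}d^-_T(v)$; (vi) follows from $A_{v_j}\cap W_j=\emptyset$; and (v) holds since $E_v\cap A^*=\emptyset$ by construction. For (i), note that $A^*\supseteq U$ (each $u$ is the head of $A_u$) and $A^*\supseteq W_j$ (for $v=v_j$), while replacing $E'_v$ by $E_v$ does not change the target set: $V(T)\setminus(E_v\cup A^*)=V(T)\setminus(E'_v\cup A^*)$. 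Since a vertex outside $W_j\cup U$ lies in $V(T_j)$, we get $V(T)\setminus(E'_v\cup A^*)\subseteq V(T)\setminus(W_j\cup U\cup E'_v\cup A_{v_j})\subseteq V(T_j)\setminus(A_{v_j}\cup E'_{v_j})$, and $A_{v_j}$ out-dominates the latter, hence also $V(T)\setminus(E_v\cup A^*)$.

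The only genuine point of care is the degree bookkeeping in the inductive step: one must verify that deleting the previously built dominating sets $W_j$ together with the remaining vertices of $U$ leaves $v_j$ with in-degree at least $2^{c+1}$, and this relies on the fact that $W_j$ and $U$ overlap in exactly $\{v_1,\dots,v_{j-1}\}$, so the total number of deleted vertices is at most $(r-1)c<c|U|$. The one apparent obstruction — that a set $A_{v_i}$ built at a later step $i>j$ may meet the earlier exceptional set $E'_{v_j}$ — is dissolved by the final redefinition $E_{v_j}:=E'_{v_j}\setminus A^*$, which leaves unchanged the set of vertices that $A_{v_j}$ is required to out-dominate (those discarded vertices lie in $A^*$ and are excluded from the target anyway).
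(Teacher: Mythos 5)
Your proof is correct, and it follows essentially the same inductive strategy as the paper's: iteratively apply Lemma~\ref{lem:out-dom-trans} inside the tournament obtained by deleting the previously constructed $A_{v_i}$ together with the still-unprocessed vertices of $U$, and use $|A_{v_i}\cap U|=1$ to keep the deleted set below $c|U|$. The only cosmetic difference is that the paper prunes each $E_u$ by $A_v$ at every inductive step, while you batch the pruning into a single cleanup $E_v:=E'_v\setminus A^*$ at the end; as you observe, this does not change the out-domination requirement because $E'_v\cup A^*=E_v\cup A^*$.
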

\begin{proof}
We repeatedly apply Lemma~\ref{lem:out-dom-trans}. Suppose that for some $U'\subseteq U$ with $U'\neq U$ we have
already found $\{A_{u}:u\in U'\}$ and $\{E'_{u}:u\in U'\}$ satisfying (ii)--(vi) (with $U'$ playing the role of $U$ and $E_u'$ playing the role of $E_u$) such that
\begin{itemize}
\item[(a)] $A_{v}$ out-dominates $V(T)\setminus (\bigcup_{u\in U'}A_{u}\cup E'_{v}\cup U)$
for all $v\in U'$;
\item[(b)] $(\bigcup_{u\in U'}A_{u})\cap U=U'$.
\end{itemize}
Pick $v\in U\setminus U'$. Our aim is to apply Lemma~\ref{lem:out-dom-trans}
to $v$ and
\[
T':=T-\left(\bigcup_{u\in U'}A_{u}\cup(U\setminus\{v\})\right).
\]
 Note that $v\in V(T')$ by (b). Moreover,
\begin{align*}
d^-_{T'}(v) \ge\delta^{-}(T') \stackrel{{\rm (iv)}}{\ge}
%\delta^{-}(T)-\left|\left(\bigcup_{u\in U'}A_{u}\cup(U\setminus\{v\})\right)\right|\ge
\delta^{-}(T)-c|U'|-|U\setminus U'| \ge\delta^{-}(T)-c|U| \ge2^{c+1},
\end{align*}
where the final inequality holds by hypothesis, and so $c\le\log d_{T'}^{-}(v)-1.$
Hence %the requirements of Lemma~\ref{lem:out-dom-trans} are satisfied and 
we can apply Lemma~\ref{lem:out-dom-trans} to
obtain disjoint sets $A_{v},E_{v}\subseteq V(T')$ as described there. For all $u\in U'$, let $E_{u}:=E'_{u}\setminus A_{v}$.
Then the collections $\{A_{u}:u\in U'\cup \{v\}\}$ and $\{E_{u}:u\in U'\cup \{v\}\}$ satisfy
(v) and (vi) (with $U'\cup \{v\}$ playing the role of $U$). Moreover, (b) holds too (with $U'\cup \{v\}$ playing the role of $U'$).
Conditions (i)--(iii) of Lemma~\ref{lem:out-dom-trans} imply that (a) holds (with $U'\cup \{v\}$, $E_{u}$ playing the roles of $U'$, $E'_u$)%
   \COMMENT{Note that for each $v'\in U'$ we have $\bigcup_{u\in U'}A_{u}\cup E'_{v'}\cup U \subseteq \bigcup_{u\in U'\cup \{v\}}A_{u}\cup E_{v'}\cup U$.
So $A_{v'}$ out-dominates $V(T)\setminus (\bigcup_{u\in U'\cup \{v\}}A_{u}\cup E_{v'}\cup U)$.}
and that (ii)--(iv) hold (with $U'\cup \{v\}$ playing the role of $U$).

We continue in this way to obtain sets $\{A_{u}:u\in U\}$ and $\{E_{u}:u\in U\}$ which satisfy (ii)--(vi) as well as (a)
(with $U$, $E_{u}$ playing the roles of $U'$, $E'_{u}$). But (a) implies (i) since $\bigcup_{u\in U}A_{u}\cup U=\bigcup_{u\in U}A_{u}$ (as $u\in A_u$ by (ii)).
\end{proof}

The next lemma is an analogue of Lemma~\ref{lem:findA*s} for in-dominating sets. The proof is similar to that of Lemma~\ref{lem:findA*s}.

\begin{lem}
\label{lem:findB*s}Let $T$ be a tournament on $n$ vertices,
$U\subseteq V(T)$ and $c\in \mathbb{N}$ with $c\ge 2$. Suppose that $\delta^{+}(T)\ge2^{c+1}+c|U|$.
Then there exist families $\{B_{v}:v\in U\}$ and $\{E_{v}:v\in U\}$
of subsets of $V(T)$ such that the following properties hold:
\begin{enumerate}
\item $B_{v}$ in-dominates $V(T)\setminus (E_v \cup \bigcup_{u\in U}B_{u})$ for all $v\in U$.
\item $T[B_{v}]$ is a transitive tournament with tail $v$ for all $v\in U$.
\item $|E_{v}|\le(1/2)^{c-1}d^{+}(v)$ for all $v\in U$.
\item $2\le |B_{v}|\le c$ for all $v\in U$.
\item $B_{u}\cap E_{v}=\emptyset$ for all $u,v\in U$.
\item $B_{u}\cap B_{v}=\emptyset$ for all $u\ne v$.
\end{enumerate}
\end{lem}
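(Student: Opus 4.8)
The statement in question is Lemma~\ref{lem:findB*s}, which is asserted to follow from Lemma~\ref{lem:findA*s} (or, more precisely, to be provable by a proof similar to that of Lemma~\ref{lem:findA*s}) via the standard ``reverse all orientations'' trick already used to pass from Lemma~\ref{lem:out-dom-trans} to Lemma~\ref{lem:in-dom-trans}. So my plan is to give the short duality argument rather than rerun the whole inductive construction.

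\begin{proof}
Let $\overleftarrow{T}$ denote the tournament obtained from $T$ by reversing the orientation of every edge. Then $\overleftarrow{T}$ is a tournament on $n$ vertices with $d^-_{\overleftarrow{T}}(v)=d^+_T(v)$ and $d^+_{\overleftarrow{T}}(v)=d^-_T(v)$ for every $v\in V(T)$; in particular $\delta^-(\overleftarrow{T})=\delta^+(T)\ge 2^{c+1}+c|U|$. Moreover, for any $X\subseteq V(T)$, the induced subtournament $\overleftarrow{T}[X]$ is the reverse of $T[X]$, so $T[X]$ is transitive with tail $v$ if and only if $\overleftarrow{T}[X]$ is transitive with head $v$; and a set $A$ out-dominates a set $S$ in $\overleftarrow{T}$ if and only if $A$ in-dominates $S$ in $T$ (since $y\in N^+_{\overleftarrow{T}}(a)$ iff $y\in N^-_T(a)$).

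Now apply Lemma~\ref{lem:findA*s} to the tournament $\overleftarrow{T}$, the same vertex set $U\subseteq V(\overleftarrow T)=V(T)$, and the same constant $c\ge 2$; the degree hypothesis $\delta^-(\overleftarrow T)\ge 2^{c+1}+c|U|$ holds as noted above. We obtain families $\{B_v:v\in U\}$ and $\{E_v:v\in U\}$ of subsets of $V(T)$ such that, in $\overleftarrow T$: $B_v$ out-dominates $V(\overleftarrow T)\setminus(E_v\cup\bigcup_{u\in U}B_u)$ for all $v\in U$; $\overleftarrow T[B_v]$ is a transitive tournament with head $v$; $|E_v|\le (1/2)^{c-1}d^-_{\overleftarrow T}(v)$; $2\le|B_v|\le c$; $B_u\cap E_v=\emptyset$ for all $u,v\in U$; and $B_u\cap B_v=\emptyset$ for all $u\ne v$. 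Translating each statement back to $T$ via the dictionary of the previous paragraph — out-domination in $\overleftarrow T$ becomes in-domination in $T$, transitive-with-head-$v$ in $\overleftarrow T$ becomes transitive-with-tail-$v$ in $T$, and $d^-_{\overleftarrow T}(v)=d^+_T(v)$ — yields exactly properties (i)--(vi) of Lemma~\ref{lem:findB*s}.
\end{proof}

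The only ``obstacle'' worth flagging is purely bookkeeping: one must make sure that every quantity appearing in Lemma~\ref{lem:findA*s} is genuinely reversal-symmetric. The two places to check are (a) that the degree condition $\delta^-\ge 2^{c+1}+c|U|$ becomes the degree condition $\delta^+\ge 2^{c+1}+c|U|$ under reversal, which it does because $\delta^-(\overleftarrow T)=\delta^+(T)$, and (b) that the bound $|E_v|\le(1/2)^{c-1}d^-(v)$ in Lemma~\ref{lem:findA*s}(iii) correctly transforms into $|E_v|\le(1/2)^{c-1}d^+(v)$ in Lemma~\ref{lem:findB*s}(iii), again because $d^-_{\overleftarrow T}(v)=d^+_T(v)$. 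Both are immediate, so there is no real difficulty; the lemma is a formal dual of Lemma~\ref{lem:findA*s}. (If one instead preferred a direct proof, one would simply repeat the inductive argument of Lemma~\ref{lem:findA*s}, replacing each application of Lemma~\ref{lem:out-dom-trans} by an application of Lemma~\ref{lem:in-dom-trans} and swapping the roles of in- and out-degrees throughout — but the reversal argument makes this redundant.)
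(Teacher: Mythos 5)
Your duality argument is correct: reversing every edge orientation swaps in- and out-degrees, transitive-with-tail with transitive-with-head, and in-domination with out-domination, so Lemma~\ref{lem:findB*s} follows from applying Lemma~\ref{lem:findA*s} to the reversed tournament, with all six conclusions translating back cleanly. The paper instead says the proof is ``similar to that of Lemma~\ref{lem:findA*s}'' (i.e.\ re-run the inductive construction with Lemma~\ref{lem:in-dom-trans} in place of Lemma~\ref{lem:out-dom-trans}), which is morally the same thing; your formal reduction is, if anything, tidier and mirrors the reversal trick the paper already uses to deduce Lemma~\ref{lem:in-dom-trans} from Lemma~\ref{lem:out-dom-trans}.
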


We will now combine the previous results in order to prove that any sufficiently highly-linked tournament
is $(C,k,t,c)$-good. Note that Lemmas~\ref{lem:good-embed} and~\ref{lem:good} together imply
Theorem~\ref{thm:mainresult}.

\begin{lem}\label{lem:good}
Let $C:=10^{7}$, $k\ge20$, $t:=164k$ and $c:=\lceil\log 50t+1\rceil$. Then
any $Ck^{2}\log k$-linked tournament is $(C,k,t,c)$-good.
\end{lem}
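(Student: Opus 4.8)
The plan is to construct all the objects required by Definition~\ref{def:good} and verify (G1)--(G9). Throughout, write $T$ for the given $Ck^2\log k$-linked tournament and $n:=|T|$; note $\delta^0(T)\ge Ck^2\log k$ since a $k$-linked tournament has minimum semi-degree at least $k$ and, more usefully, $Ck^2\log k$-linkedness forces $\delta^0(T)\ge Ck^2\log k$ (otherwise a low-degree vertex cannot be the endpoint of enough disjoint paths). This gives (G8) immediately. The key point is that all the sets we build are small — of total size $O(ktc)=O(k^2\log k)$ — which is a constant fraction smaller than $\delta^0(T)$, so degree conditions like ``out-degree at least $2n/5$'' and the various ``$\le d_\pm/50$'' bounds will hold with room to spare.

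\textbf{Construction.} First I would use Proposition~\ref{prop:degseqbound} to set aside the set $W$ of vertices of in-degree at most $2n/5$ and the set $W'$ of vertices of out-degree at most $2n/5$; both have size at most $4n/5+1$, but more to the point, any vertex \emph{not} chosen to be a head of an $A_i^\ell$ or a tail of a $B_i^\ell$ that lies in $W\cup W'$ will be problematic, so I must ensure the heads/tails are chosen to include all sufficiently-low-degree vertices. Concretely, pick the $2kt$ vertices of smallest in-degree to be the heads of the transitive out-dominating sets (call this set $A$) and the $2kt$ vertices of smallest out-degree to be the tails of the transitive in-dominating sets (call it $B'$), chosen disjoint from $A$ (possible since $\delta^0(T)$ is huge). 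Then for each head $v\in A$ apply Lemma~\ref{lem:findA*s} (with $U=A$, after deleting $B'$ and the already-used vertices, checking $\delta^-\ge 2^{c+1}+c|U|$, which holds since $2^{c+1}\le 200t$ and $c|U|=2ktc\ll Ck^2\log k$) to get disjoint transitive sets $A_i^\ell$ with heads in $A$, out-dominating all but an exceptional set; group the $2kt$ resulting heads so that $k$ get labelled $A_i^\ell$ for $i\in[k],\ell\in[t]$, and take $E_{A,i}$ to be the union of the relevant exceptional sets — crucially $|E_{A,i}|\le t\cdot(1/2)^{c-1}d^- \le d_-/50$ by the choice $c=\lceil\log 50t+1\rceil$ and since the heads were chosen of minimum in-degree so $d^-(v)\le d_-$ for the head $v$. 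This gives (G1), (G3); symmetrically Lemma~\ref{lem:findB*s} gives (G2), (G4). For (G7) apply Lemma~\ref{lem:coveringedge} (valid since $T$ is strongly $2$-connected) to each $v\in A^*\cup B^*$ to get a covering edge $e_v$; greedily make the choices so that the $e_v$ form a matching avoiding $A^*\cup B^*$ and avoiding previously-chosen activating edges — possible because $|A^*\cup B^*|\le 2ctk$ is tiny compared to $\delta^0(T)$, so at each step most of the $\Omega(n)$ choices for a covering edge remain.

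\textbf{The paths.} The main work is (G5)--(G6): for each $i\in[k]$ I need vertex-disjoint paths $P_i^1,\dots,P_i^t$ from $h(T[B_i^\ell])$ to $t(T[A_i^\ell])$, with $F_i\subseteq E(P_i^t)$, with $X_i:=(A\cup B')\setminus(A_i^*\cup B_i^*)\subseteq V(P_i^t)$, all $P_i^\ell$ pairwise edge-disjoint across different $i$, total size $\le n/20$, and interiors meeting $A^*\cup B^*$ only inside $(A\cup B')\setminus(A_i^*\cup B_i^*)$. For each $i$, I would first cover $X_i$ together with the vertices of the covering edges in $F_i$ and their activating edges by a small number of paths $\mathcal Q_i^1,\dots$ lying outside all the $A_j^*,B_j^*$-interiors (here I use that $|X_i|\le 2kt$ and $|F_i|\le 2ctk$, so a trivial path cover into $\le 4kt+2ctk = O(k^2\log k)$ singletons/edges works, keeping $\mathcal Q_i$ off the $A_j^*\cup B_j^*$). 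Then apply Lemma~\ref{lem:shortlinkagewithpaths} with $s$ chosen large enough — roughly $s:=20k$ — so that the ``$2sm$-linked'' hypothesis is met ($m=O(k^2\log k)$ so $2sm=O(k^3\log k)$; this is where $C=10^7$ must be large enough that $Ck^2\log k$-linkedness, minus the $O(k^2\log k)$ vertices and edges already frozen into the $A$'s, $B$'s and matchings via Proposition~\ref{prop:robustlinkage}, still exceeds $2sm$ — I would check the arithmetic here) and whose conclusion (iv) gives total size $\le |T|/s + |X_i\cup F_i\text{-stuff}| \le n/(20k)+O(k^2\log k)\le n/(20k)$; summing over $i$ gives $\le n/20$, which is (G5)'s size bound. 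Doing the $k$ linkage problems one after another, each time deleting the previously-used edges (only $O(k^2\log k)$ of them in total) via Proposition~\ref{prop:robustlinkage}, gives the cross-$i$ edge-disjointness.

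\textbf{Main obstacle.} The delicate part is not any single estimate but bookkeeping the nested deletions so that every linkedness and degree hypothesis still holds after all earlier objects are fixed: at the moment we build $P_i^\bullet$ we have already frozen all the $A$'s, $B$'s, all matchings $F_j$ and activating edges, and the earlier path families $P_{j}^\bullet$ for $j<i$ — a total of $O(k^2\log k)$ vertices and $O(k^2\log k)$ edges, which Proposition~\ref{prop:robustlinkage} lets us absorb provided $C$ is chosen large enough (the constant $10^7$ is exactly calibrated for this, with $t=164k$, $c\le\log k+15$). One also has to be careful that the interiors of the $P_i^\ell$ avoid the \emph{interiors} of all $A_j^*\cup B_j^*$ (they may pass through the heads in $A$ and tails in $B'$, which is allowed by (G5)) — this is arranged by having the auxiliary path systems $\mathcal Q_i$ and the linkage of Lemma~\ref{lem:shortlinkagewithpaths} operate in the tournament with the interiors of all $A_j^\ell, B_j^\ell$ deleted except for their heads/tails. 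Finally (G6)'s containment $X_i\subseteq V(P_i^t)$ is handled by routing $\mathcal Q_i$ through $P_i^t$ specifically (put all the $\mathcal Q_i$-paths into the $t$-th linkage problem), and (G7)'s activating-edge-avoidance is ensured because the activating edges were reserved out of the tournament before the linkage step.
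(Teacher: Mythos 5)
Most of your construction matches the paper's: you choose $A$ and $B'$ among the low in-/out-degree vertices, invoke Lemmas~\ref{lem:findA*s}, \ref{lem:findB*s} and \ref{lem:coveringedge}, and correctly identify why $c=\lceil\log 50t+1\rceil$ is what makes $|E_{A,i}|\le d_-/50$ go through. (A minor slip: there are $kt$ heads, not $2kt$, since there are $kt$ sets $A_i^\ell$.) But the way you propose to produce the paths $P_i^\ell$ has a genuine gap.

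You plan to solve $k$ linkage problems sequentially, one per $i$, deleting the previously-used edges between iterations; you assert that the earlier path families $P_j^\bullet$ for $j<i$ account for only $O(k^2\log k)$ edges in total, so that Proposition~\ref{prop:robustlinkage} can absorb the deletions. That count is wrong. Lemma~\ref{lem:shortlinkagewithpaths} controls the total size of the linking paths only by $|T|/s + |V(\mathcal Q_1)\cup\dots\cup V(\mathcal Q_k)|$, i.e.\ $\Theta(n/s)$. Even with your choice $s\approx 20k$, a single iteration may consume $\sim n/(20k)$ vertices and hence $\sim n/(20k)$ edges; over $k$ iterations this is $\sim n/20$ edges, which for large $n$ dwarfs the $Ck^2\log k$ budget that Proposition~\ref{prop:robustlinkage} allows. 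Deleting them would destroy the linkedness entirely, so the later iterations cannot be carried out. The paper avoids this by applying Lemma~\ref{lem:shortlinkagewithpaths} exactly once, with $s=30$, to the full family of $kt$ source--sink pairs $(b_i^\ell, a_i'^\ell)$ and the path systems $\mathcal{Q}_i^\ell$ simultaneously (on the tournament $T'''$ with the interiors of $A^*\cup B^*$ deleted); the lemma's conclusion already delivers \emph{edge-disjoint} paths $P_1^1,\dots,P_k^t$, so the cross-$i$ edge-disjointness needed for (G5) comes for free rather than by iterative edge deletion. Your proposal needs to be reorganised along those lines; as written, the step ``delete earlier $P_j^\bullet$ and re-link'' fails.
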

\begin{proof}
Let $T$ be a $Ck^{2}\log k$-linked tournament, and let $n:=|T|$.
Note in particular that $\delta^{0}(T)\ge Ck^{2}\log k$ by Proposition~\ref{prop:samelinkage},
so (G8) is satisfied. We have to choose $A_{1}^{1},\dots,A_{k}^{t}$, $B_{1}^{1},\dots,B_{k}^{t}$,
$E_{A,1},\dots,E_{A,k}$, $E_{B,1},\dots,E_{B,k}$, $F_{1},\dots,F_{k}$ and
$P_{1}^{1},\dots,P_{k}^{t}$ satisfying (G1)--(G7) of Definition~\ref{def:good}.
%We will later define
%$A_1^*, \dots, A_k^*$, $A^*$, $A$, $B_1^*, \dots, B_k^*$, $B^*$, $B'$, $F_1^{\textrm{act}}, \dots, F_k^{\textrm{act}}$, 
%$d_-$ and $d_+$ -- these definitions will agree with those in Definition~\ref{def:good}.

Construct a set $A\subseteq V(T)$ by greedily choosing $kt$ vertices of least possible in-degree in $T$, and likewise 
construct a set $B'\subseteq V(T)$ by greedily choosing $kt$ vertices of least possible out-degree in $T$.\COMMENT{"Minimal 
in-degree" sounded as though we could guarantee $kt$ vertices $v$ with $d^-(v) = \delta^-(T)$. -JL}
Note that by choosing the vertices
in~$A$ and~$B'$ suitably, we may assume that $A\cap B'=\emptyset$. (Since $n\ge\delta^{0}(T)\ge2kt$,
this is indeed possible.) Define
\begin{align*}
d_{-} & :=\min\{d^{-}(v):v\in V(T)\setminus(A\cup B')\},\\
d_{+} & :=\min\{d^{+}(v):v\in V(T)\setminus(A\cup B')\}.
\end{align*}
Note that  $d^-(a) \le d_-$ for all $a \in A$ and $d^+(b) \le d_+$ for all $b\in B'$.

Our first aim is to choose the sets $A_{1}^{1},\dots,A_{k}^{t}$ using Lemma~\ref{lem:findA*s}.
Partition $A$ arbitrarily into sets $A_{1},\dots,A_{k}$ of size
$t$, and write $A_{i}=:\{a_{i}^{1},\dots,a_{i}^{t}\}$. Since $|B'|=kt\le \delta^0(T)/2$, we have
\[
2^{c+1}+c|A|\le 400t+ckt\le \frac{C}{2}k^{2}\log k\le\delta^{-}(T)-|B'|\le \delta^-(T-B').
\]
Thus we can apply Lemma~\ref{lem:findA*s} to $T-B'$, $A$ and $c$ in order 
to obtain almost out-dominating sets $A_{i}^{\ell}\ni a_{i}^{\ell}$
and corresponding exceptional sets $E_{A,i}^{\ell}$ as in the statement
of Lemma~\ref{lem:findA*s} (for all $i\in [k]$ and all $\ell \in [t]$).
Write $A_{i}^{*}:=A_{i}^{1}\cup\dots\cup A_{i}^{t}$
and $A^{*}:=A_{1}^{*}\cup\dots\cup A_{k}^{*}$. 

Let us now verify~(G1). By Lemma~\ref{lem:findA*s}(ii), (iv) and~(vi), each
$T[A_{i}^{\ell}]$ is a transitive tournament with head $a_i^\ell$,
$2\le |A_{i}^{\ell}|\le c$, and the sets $A_{1}^{1},\dots,A_{k}^{t}$ are all disjoint.
In particular, $A=\{h(A_{i}^{\ell}):i\in[k],\ell\in[t]\}$.
We claim in addition that $d^{+}(a_{i}^{\ell})\ge2n/5$. Indeed, Proposition~\ref{prop:degseqbound}
implies that $T$ has at most $4n/5+1$ vertices of out-degree at most $2n/5$, and hence at least $n/5-1$
vertices of out-degree at least $2n/5$. Moreover,
\[
|A|=kt\le\frac{Ck^{2}\log k}{5}-1\le\frac{n}{5}-1.
\]
So since the vertices of $A$ were chosen to have minimal in-degree
in $T$, it follows that $d^{+}(a_{i}^{\ell})\ge2n/5$ for all $i\in [k]$ and all $\ell\in [t]$. Thus~(G1) holds. 

We will next apply Lemma~\ref{lem:findB*s} in order to obtain the sets $B_{1}^{1},\dots,B_{k}^{t}$.
To do this, we first partition $B'$ arbitrarily into
sets $B^\prime_{1},\dots,B^\prime_{k}$ of size $t$, and write $B^\prime_{i}=:\{b_{i}^{\prime1},\dots,b_{i}^{\prime t}\}$.
Since
$|A^{*}|\le ktc\le\delta^{0}(T)/2$, we have
\[
2^{c+1}+c|B|\le 400t+ckt\le\frac{C}{2}k^{2}\log k\le\delta^+(T)-|A^{*}|\le\delta^+(T-A^{*}).
\]
Thus we can apply Lemma~\ref{lem:findB*s} to $T-A^{*}$, $B'$ and $c$ in order
to obtain almost in-dominating sets $B_{i}^{\ell}\ni b_{i}^{\prime \ell}$
and corresponding exceptional sets $E_{B,i}^{\ell}$ as in the statement
of Lemma~\ref{lem:findB*s} (for all $i\in [k]$ and all $\ell \in [t]$).
Write $B_{i}^{*}:=B_{i}^{1}\cup\dots\cup B_{i}^{t}$
and $B^{*}:=B_{1}^{*}\cup\dots\cup B_{k}^{*}$. Similarly as before one can show
that~(G2) holds.%
    \COMMENT{First note that $A_{1}^{1},\dots,A_{k}^{t},B_{1}^{1},\dots,B_{k}^{t}$
are all disjoint. By Lemma~\ref{lem:findB*s}(ii) and (iv)  each
$T[B_{i}^{\ell}]$ is a transitive tournament with tail $b_i^{\prime\ell}$, and
$2\le |B_{i}^{\ell}|\le c$. In particular, $B'=\{t(B_{i}^{\ell}):i\in[k],\ell\in[t]\}$.
We claim in addition that $d^{-}(b_{i}^{\prime \ell})\ge2n/5$. Indeed, Proposition~\ref{prop:degseqbound}
implies that $T$ has at most $4n/5+1$ vertices of in-degree at most $2n/5$, and hence
at least $n/5-1$ vertices of in-degree at least $2n/5$. Moreover
\[
|B'|=kt\le\frac{Ck^{2}\log k}{5}-1\le\frac{n}{5}-1.
\]
So since the vertices of $B'$ were chosen to have minimal out-degree
in $T$ we have $d^{-}(b_{i}^{\prime \ell})\ge2n/5$ for all $i\in [k]$ and all $\ell\in [t]$.
Thus (G2) holds.}

We now define the exceptional sets $E_{A,i}$ and $E_{B,i}$.
For all $i\in [k]$, let 
$$
E_{A,i}:=(E_{A,i}^{1}\cup\dots\cup E_{A,i}^{t})\setminus B^{*} \ \ \mbox{ and } \ \
E_{B,i}:=(E_{B,i}^{1}\cup\dots\cup E_{B,i}^{t}).
$$ 
Recall
from Lemmas~\ref{lem:findA*s}(v) and~\ref{lem:findB*s}(v)
that $E_{A,i}^{\ell}\cap A^{*}=\emptyset$ and $E_{B,i}^{\ell}\cap(A^{*}\cup B^{*})=\emptyset$
for all $i\in [k]$ and all $\ell \in [t]$. Thus $E_{A,i}\cap(A_{i}^{*}\cup B_{i}^{*})=\emptyset$
and $E_{B,i}\cap(A_{i}^{*}\cup B_{i}^{*})=\emptyset$ for all $i\in [k]$.
By Lemma~\ref{lem:findA*s}(i), each $A_{i}^{\ell}$ out-dominates $V(T)\setminus (A^{*}\cup B^{*}\cup E_{A,i})$.
Lemma~\ref{lem:findA*s}(iii) and the fact that $a_i^\ell \in A$ together imply that
\begin{equation}\label{eq:sizeEi+}
|E_{A,i}|\le\sum_{\ell=1}^{t}|E_{A,i}^{\ell}|\le\sum_{\ell=1}^{t}\frac{1}{2^{c-1}}d^{-}(a_{i}^{\ell})\le\frac{t}{2^{c-1}}d_{-}\le \frac{d_{-}}{50},
\end{equation}
so (G3) holds. Similarly, by Lemma~\ref{lem:findB*s}(i), each $B_{i}^{\ell}$
in-dominates $V(T)\setminus (A^{*}\cup B^{*}\cup E_{B,i})$,
and as in (\ref{eq:sizeEi+}) one can show that $|E_{B,i}|\le d_{+}/50$. Thus (G4) holds. 

We now use Lemma~\ref{lem:coveringedge} in order to define the sets $F_{1},\dots,F_{k}$ of covering edges.
Recall from (G7) that we require $F_{1}\cup\dots\cup F_{k}$ to be a matching in $T-(A^{*}\cup B^{*})$.
Suppose that for some (possibly empty) subset $V' \subsetneq A^*\cup B^*$ we have defined a set $\{e_v:v\in V'\}$ of independent edges 
in $T-(A^* \cup B^*)$ such that 
$e_v$ is a covering edge for $v$ and $e_v \ne e_{v'}$ whenever $v \ne v'$.
Pick any vertex $v\in (A^*\cup B^*)\setminus V'$. We will next define $e_v$.
Let $T'$ be the tournament obtained from $T$ by deleting $(A^* \cup B^*)\setminus \{v\}$ as well as the endvertices of
the covering edges $e_{v'}$ for all $v'\in V'$.
%\[T':=T - \left(\left(A^* \cup B^*\cup \bigcup_{v' \in V'} e_{v'}\right)\setminus \{v\}\right). \]
Then
\[
|V(T)\setminus V(T')|\le |A^{*}\cup B^{*}|+2|A^{*}\cup B^{*}|\le 3ktc\le\frac{C}{2}k^{2}\log k,
\]
so by Proposition~\ref{prop:robustlinkage}, $T'$ is still $(Ck^{2}\log k/2)$-linked and hence strongly 2-connected.
We may therefore apply Lemma~\ref{lem:coveringedge} to find a covering
edge $e_{v}$ for $v$ in $T'$.  Continue in this way until we have chosen $e_v$ for each $v\in A^*\cup B^*$
and let $F_{i}:=\{e_{v}:v\in A_{i}^{*}\cup B_{i}^{*}\}$. Then the first part of (G7) holds.

It remains to choose the paths $P_{1}^{1},\dots,P_{k}^{t}$. Recall from (G6) that we need
to ensure that $(A\cup B')\setminus(A_{i}^{*}\cup B_{i}^{*})\subseteq V(P_{i}^{t})$
for all $i\in [k]$. We could achieve this by incorporating each of these vertices using
the high linkedness of~$T$. However, since $|A\cup B'|=2kt$, a direct application of linkedness
would require $T$ to be $\Theta(k^{3})$-linked. For each $i\in [k]$, we will therefore
first choose a path cover $\mathcal{Q}_{i}$ of $T[(A\cup B')\setminus(A_{i}^{*}\cup B_{i}^{*})]$
consisting of few paths and then use Lemma~\ref{lem:shortlinkagewithpaths}
(and thereby the high linkedness of $T$) to incorporate these paths
into~$P_{i}^{t}$. This has the advantage that we will only need $T$ to be $\Theta(k^{2}\log k)$-linked. 

Let us first choose the path covers $\mathcal{Q}_{i}$ of $T[(A\cup B')\setminus(A_{i}^{*}\cup B_{i}^{*})]$.
Suppose that for some $j\in [k]$ we have already found path systems $\mathcal{Q}_{1},\dots,\mathcal{Q}_{j-1}$
such that, for each $i< j$, $\mathcal{Q}_{i}$ is a path cover of $T[(A\cup B')\setminus(A_{i}^{*}\cup B_{i}^{*})]$ with
$|\mathcal{Q}_{i}|\le 2k$, and such that for all $i<i'<j$ the paths in $\mathcal{Q}_{i}$ are edge-disjoint from paths
in $\mathcal{Q}_{i'}$. To choose $\mathcal{Q}_{j}$, apply Corollary~\ref{cor:pathcover}
to the oriented graph $T''$ obtained from $T[(A\cup B')\setminus(A_{j}^{*}\cup B_{j}^{*})]$ by deleting the edges
of all the paths in $\mathcal{Q}_{1},\dots,\mathcal{Q}_{j-1}$. Since $\delta(T'')\ge |T''| -1 -2(j-1)\ge |T''|-2k$,
Corollary~\ref{cor:pathcover} ensures that $|\mathcal{Q}_{j}|\le 2k$.

We will now choose $P_{1}^{1},\dots,P_{k}^{t}$. For each $i\in [k]$ and each $\ell\in [t]$, let $a_{i}^{\prime \ell}$
denote the tail of $T[A_{i}^{\ell}]$ and $b_{i}^{\ell}$ the head of $T[B_{i}^{\ell}]$.
Let 
$$
A':=\{a_{i}^{\prime \ell}:i\in [k], \ \ell\in [t]\} \ \ \mbox{ and } \ \  B:=\{b_{i}^{\ell}:i\in [k], \ \ell\in [t]\}.
$$
For all $i\in [k]$ and all $\ell\in [t-1]$ let $\mathcal{Q}_i^\ell:=\emptyset$. For all $i\in [k]$
let $\mathcal{Q}_i^t$ be the path system consisting of all the edges in $F_i$ (each viewed as a path of length one)
and all the paths in $\mathcal{Q}_i$.%
   \COMMENT{Since $F_i\subseteq T-(A^*\cup B^*)$, this is indeed a path system.}
Let $T''':=T-((A^*\cup B^*)\setminus (A\cup A'\cup B\cup B'))$.
Our aim is to apply Lemma~\ref{lem:shortlinkagewithpaths} with $s:=30$ to $T'''$,
the vertices $b_{1}^{1}, \dots, b_k^t$, $a_{1}^{\prime 1},\dots,a_{k}^{\prime t}$,
and the path systems $\mathcal{Q}_1^1, \dots, \mathcal{Q}_k^t$. To verify that $T'''$ is sufficiently highly linked,
let $m$ be as defined in~(\ref{eq:defm}) and note that
\begin{align*}
m & =kt+3\sum_{i=1}^{k}|F_{i}|+\sum_{i=1}^{k}|\mathcal{Q}_{i}|+\left|\bigcup_{i=1}^{k}V(\mathcal{Q}_{i})\right|
 \le kt+6ckt+2k^{2}+|A\cup B'|\\ & \le 5kt+6ckt\le\frac{C}{70}k^{2}\log k.
\end{align*}
Together with the fact that $|T|-|T'''|\le 2ckt$ and Proposition~\ref{prop:robustlinkage} this implies that
$T'''$ is $2\cdot 30 m$-linked. So we can indeed apply Lemma~\ref{lem:shortlinkagewithpaths}
to find edge-disjoint paths $P_i^\ell$ in $T'''$ (for all $i\in [k]$ and all $\ell\in [t]$) satisfying the following properties:
\begin{enumerate}
\item $P_{i}^\ell$ is a path from $b_{i}^{\ell}$ to $a_{i}^{\prime \ell}$.
\item $Q\subseteq P_{i}^\ell$ for all $Q\in\mathcal{Q}_{i}^\ell$. 
\item $V(P_{i}^\ell)\cap V(P_{j}^{m})\subseteq V(\mathcal{Q}_{i}^\ell)\cap V(\mathcal{Q}_{j}^{m})$ for all $(i,\ell)\neq (j,m)$.
\item We have that
\begin{align*}
|P_{1}^{1}\cup\dots\cup P_{k}^{t}| & \le\frac{n}{30}+ 2\sum_{i=1}^{k}|F_{i}|+ \left|\bigcup_{i=1}^{k}V(\mathcal{Q}_{i})\right|
= \frac{n}{30}+2|A^*\cup B^*|+|A\cup B'|\\
& \le \frac{n}{30}+4ckt+2kt\le\frac{n}{20}.
\end{align*}
\end{enumerate}
Condition (ii) implies that $F_i\subseteq P_i^t$ and
$(A\cup B')\setminus(A_{i}^{*}\cup B_{i}^{*})=V(\mathcal{Q}_{i})\subseteq V(\mathcal{Q}_{i}^t)\subseteq V(P_{i}^{t})$ for all $i\in [k]$.
Thus (G6) holds.

We now prove that (G5) holds.
From (iii) and the fact that that $V(\mathcal{Q}_i^\ell) \cap V(\mathcal{Q}_i^m) = \emptyset$ for all $i\in [k]$, $\ell\ne m$, it follows that 
$P_i^1,\dots,P_i^t$ are vertex-disjoint for all $i\in [k]$.
Together with (i) and (iv) this implies that in order to check (G5), it remains to show that
\begin{equation}\label{eq:nearlythere2}
V(\textrm{Int}(P_i^\ell)) \cap  (A^*\cup B^*) \subseteq (A\cup B')\setminus (A_i^*\cup B_i^*)\ \ \ \ \textrm{ for all }i\in [k],\ell\in [t].
\end{equation}
Clearly,
\begin{align}\label{eq:VPil2}
V(P_i^\ell)\cap (A^*\cup B^*)&\subseteq V(T''')\cap (A^*\cup B^*)\\
                             & = A\cup A'\cup B\cup B' \ \ \ \ \textrm{ for all }i\in [k],\ell\in [t].\nonumber
\end{align}
By definition, we have $(A'\cup B)\cap V(\mathcal{Q}_j^m) = \emptyset$ 
for all $j\in [k],m\in [t]$. It therefore follows from (iii) that each vertex in $A'\cup B$ may appear in at most
one path $P_j^m$. However, by~(i) each vertex in $A'\cup B$ is an endpoint of $P_j^m$ for some $j\in [k],m\in [t]$. 
Hence
\begin{equation}\label{eq:blabla}
V(\textrm{Int}(P_i^\ell)) \cap (A'\cup B) = \emptyset \ \ \ \ \textrm{ for all }i\in [k],\ell\in [t].
\end{equation}
Fix $i\in [k], \ell\in [t]$ and take $j\in [k]\setminus\{i\}$. We have $(A \cup B') \cap (A_i^* \cup B_i^*) \cap V(\mathcal{Q}_i^\ell) = \emptyset$, 
and by (G6) we have $(A\cup B')\cap (A_i^* \cup B_i^*) \subseteq (A\cup B')\setminus (A_j^* \cup B_j^*) \subseteq V(P_j^t)$.
Applying (iii) to $P_i^\ell$ and $P_j^t$, it therefore follows that
\begin{equation}\label{eq:lastone}
V(P_i^\ell) \cap (A \cup B') \cap (A_i^* \cup B_i^*) = \emptyset \ \ \ \ \textrm{ for all }i\in [k], \ell\in [t].
\end{equation}
(\ref{eq:VPil2})--(\ref{eq:lastone}) now imply (\ref{eq:nearlythere2}). Thus (G5) holds.

So it remains to check that the last part of (G7) holds too, i.e.~that
$F^{\rm act}_{i}\cap E(P_{j}^{\ell})=\emptyset$ for all $i,j\in [k]$ and all $\ell\in [t]$.
Consider any covering edge $e_{v}=x_{v}y_{v}\in F_{i}$. Then (G6) implies that $x_{v}$ and $y_{v}$
are contained in $P_{i}^{t}$. Moreover, (iii) implies that
$V(P_{i}^t)\cap V(P_{j}^{\ell})\subseteq V(\mathcal{Q}_{i}^t)\cap V(\mathcal{Q}_{j}^{\ell})\subseteq A\cup B'$
whenever $(i,t)\neq (j,\ell)$. Since $x_{v}, y_{v}\notin A\cup B'$, this shows that $x_{v}v,vy_{v}\notin E(P_{j}^{\ell})$
whenever $(i,t)\neq (j,\ell)$. But since $e_{v}\in E(P_{i}^{t})$,
we also have $x_{v}v,vy_{v}\notin E(P_{i}^{t})$. This completes the proof that $T$ is $(C,k,t,c)$-good.
\end{proof}

\section{Concluding remarks} \label{concluding}

\subsection{Eliminating the logarithmic factor}
A natural approach to improve the bound in Theorem~\ref{thm:mainresult} would be to reduce the parameter $c$, 
i.e.~to consider smaller `almost dominating' sets. In particular, if we could choose $c$ independent of $k$, then 
we would obtain the (conjectured) optimal bound of $\Theta(k^2)$ for the linkedness.
The obstacle to this in our argument is given by~(\ref{eq:sizeEi+}),
which requires that $c$ has a logarithmic dependence on~$k$.%

\subsection{Algorithmic aspects}
As remarked in the introduction, the proof of Theorem~\ref{thm:mainresult} is algorithmic.
Indeed, when we apply the assumption of high linkedness to find appropriate paths in the proof of Lemma~\ref{lem:good}
(via Lemma~\ref{lem:shortlinkagewithpaths}), we can make use of the main result of~\cite{CSS} that these can be found in polynomial time.
Moreover, the proof of the Gallai-Milgram theorem (Theorem~\ref{thm:GM}) is also algorithmic (see~\cite{BondyMurty}).
These are the only tools we need in the proof, and the proof itself immediately translates into a polynomial time algorithm.%
\COMMENT{full algorithm:
\begin{enumerate}
\item Construct vertex sets $A,B'\subseteq V(T)$ by greedily taking $kt$ vertices of least possible in- and out-degrees as in the proof of Lemma~\ref{lem:good}. 
\item Extend $A$ and $B'$ into dominating sets $A_1^1, \dots, A_k^t\subseteq V(T)$ and $B_1^1, \dots, B_k^t\subseteq V(T)$ with respective exceptional sets $E_{A,1},\dots, E_{A,k}$ and $E_{B,1},\dots, E_{B,k}$ using the proofs of Lemmas~\ref{lem:findA*s} and \ref{lem:findB*s}. Define $A_i^* := A_i^1 \cup \dots \cup A_i^t$ and $B_i^* := B_i^1 \cup \dots \cup B_i^t$ for all $i \in [k]$.
\item Find sets $F_1, \dots, F_k\subseteq E(T)$ of covering edges by brute force.\COMMENT{This seems simpler than arguing for the process being polynomial.}
\item Use Theorem~\ref{thm:GM} to find path covers $\mathcal{Q}_1, \dots, \mathcal{Q}_k$ as in the proof of Lemma~\ref{lem:good-embed}.
\item Apply linkage to incorporate $\mathcal{Q}_1, \dots, \mathcal{Q}_k$ into paths $P_1^1, \dots, P_k^t\subseteq T$ as in the proof of Lemma~\ref{def:good}. 
\item Set $i:=1$, $C_1, \dots, C_k:=\emptyset$.
\item If $i>k$, halt and output $C_1,\dots,C_k$. Otherwise, define $T_i$, $E_{A,i}'$, $E_{B,i}'$ and $X_i$ as in the proof of Lemma~\ref{lem:good-embed}.
\item Form a path system $\mathcal{Q}$ in $T_i$ with properties (Q1)--(Q7) with respect to $E_{A,i}'$ and $E_{B,i}'$ by applying Theorem~\ref{thm:GM} and the proofs of Lemmas~\ref{lem:pathextend-out} and \ref{lem:pathextend-in} as in the proof of Lemma~\ref{lem:mainengine}.
\item Link these paths into a cycle $C_i'$ covering $T_i-(A_i^* \cup B_i^*)$ using the domination of $A_i^*$ and $B_i^*$ as in the proof of Lemma~\ref{lem:mainengine}. (Find the edges out of $A_i^*$ and the edges into $B_i^*$ by brute force.)
\item Turn $C_i'$ into a Hamilton cycle $C_i$ by deploying covering edges in $F_i$ as appropriate.
\item Increment $i$ and go to step (vii).
\end{enumerate}
It is therefore immediate that steps (i), (iii)--(vii) and (ix)--(xi) take only polynomial time, and that no step is repeated more than $k$ times. The proofs of  Lemmas~\ref{lem:findA*s} and \ref{lem:findB*s} consist of polynomially many applications of Lemmas~\ref{lem:out-dom-trans} and \ref{lem:in-dom-trans} respectively. The proofs of Lemmas~\ref{lem:out-dom-trans} and \ref{lem:in-dom-trans} immediately yield polynomial time algorithms, so step (ii) is valid and takes polynomial time. The proofs of Lemmas~\ref{lem:pathextend-in} and \ref{lem:pathextend-out} consist of finding edges incident between the endpoints of paths and given sets, which can be done in polynomial time by brute force, so step (viii) is valid and takes polynomial time. Hence the entire algorithm takes polynomial time.}
\COMMENT{Note to self: remember to make sure everything in the bibliography is still being used when we reach the final version. -JL}

\medskip

{\footnotesize \obeylines \parindent=0pt

Daniela K\"{u}hn, John Lapinskas, Deryk Osthus, Viresh Patel 
School of Mathematics 
University of Birmingham
Edgbaston
Birmingham
B15 2TT
UK
\begin{flushleft}
{\it{E-mail addresses}:\\
\tt{\{d.kuhn, jal129, d.osthus, v.patel.3\}@bham.ac.uk}}
\end{flushleft}}

\end{document}